\newtheorem{theorem}{Theorem}[section]
\newtheorem{corollary}[theorem]{Corollary}
\newtheorem{lemma}[theorem]{Lemma}
\newtheorem{proposition}[theorem]{Proposition}
\newtheorem{remark}[theorem]{Remark}
\theoremstyle{definition} \theoremstyle{remark}
\numberwithin{equation}{section}
\newcommand{\G}{\mathcal{G}}
\newcommand{\R}{\mathbb{R}}
\newcommand{\N}{\mathbb{N}}
\newcommand{\B}{\mathcal{B}}
\newcommand{\U}{\mathcal{U}}
\newcommand{\K}{\mathcal{K}}
\newcommand{\p}{\mathcal{P}}
\newcommand{\s}{\mathbb{S}}
\newcommand{\cl}{\mathcal{L}}
\newcommand{\la}{\lambda}
\newcommand{\incident}{\succ}
\newcommand{\edge}{{\mathrm e}} 
\newcommand{\vv}{{\mathrm{v}}}
\newcommand{\intd}{\,\mathrm{d}}
\newcommand{\intervalco}[1]{\mathopen[#1\mathclose)}
\begin{document}

\title{Sign-changing prescribed mass solutions for $L^2$-supercritical NLS on compact metric graphs
}

\author{Louis Jeanjean\footnote{louis.jeanjean@univ-fcomte.fr}}  \affil{{\small Universit\'e Marie et Louis Pasteur, CNRS, LmB (UMR 6623), Besan\c{c}on F-25000, France}}

\author{Linjie Song\footnote{songlinjie18@mails.ucas.edu.cn}}  \affil{{\small Tsinghua University, Department of Mathematical Sciences, Beijing 100084, China}}

\date{}	
\maketitle

\begin{abstract}
	This paper is devoted to the existence of multiple sign-changing solutions of prescribed mass for a mass-supercritical nonlinear Schr\"odinger equation set on a compact metric graph. In particular, we obtain, in the supercritical mass regime, the first multiplicity result for prescribed mass solutions on compact metric graphs. As a byproduct, we prove that any eigenvalue of the associated linear operator is a bifurcation point. Our approach relies on the introduction of a new kind of link and on the use of gradient flow techniques on a constraint. It can be transposed to other problems posed on a bounded domain.
\end{abstract}

\medskip

{\small \noindent \textbf{Key Words:} Constrained functionals; $L^2$-supercritical; compact metric graph; sign-changing critical points, bifurcation phenomena.\\
\textbf{Mathematics Subject Classification:} 35A15, 35J60}

\medskip

{\small \noindent \textbf{Acknowledgements:} This work has been carried out in the framework of the Project NQG (ANR-23-CE40-0005-01), funded by the French National Research Agency (ANR). It was completed when the second author was visiting the Universit\'e Marie et Louis Pasteur, LmB (UMR 6623). Linjie Song is supported by "Shuimu Tsinghua Scholar Program" and by "Postdoctoral Fellowship Program of CPSF" (GZB20230368), and funded by "China Postdoctoral Science Foundation" (2024T170452). The first author thanks D. Galant and S. Le Coz, for useful remarks.}

\medskip
{\small \noindent \textbf{Data availability:} Data sharing is not applicable to this article as no datasets were generated or analysed during the current study.}

\medskip
{\small \noindent \textbf{Ethical Statement}
	
\noindent \text{Conflict of interest:} The authors have no relevant financial or non-financial interests to disclose.}

\section{Introduction and main results}

In \cite{CJS}, the first author of this article and his collaborators studied the existence of non-constant critical points for the $C^2$ \emph{mass supercritical} NLS energy functional $E(\cdot,\G): H^1(\G) \to \R$ defined by
\begin{align} \label{eqnerf}
	E(u,\G) = \frac12 \int_\G|u'|^2dx - \frac1p \int_\G|u|^pdx, \quad p > 6,
\end{align}
under the mass constraint
\begin{align} \label{eqmass}
	\int_\G|u|^2dx = \mu > 0,
\end{align}
where $\G$ is a \emph{compact metric graph}. Such critical points, also called bound states, solve the stationary nonlinear Schr\"odinger equation (NLS) on $\G$
\begin{align} \label{eqequation}
	-u'' + \la u = |u|^{p-2}u
\end{align}
for some Lagrange multiplier $\la$, coupled with Kirchhoff condition at the vertices (see \eqref{eqequwithkc} below). In turn, the solutions of \eqref{eqequation} give rise to time-dependent NLS standing waves on $\G$,
\begin{align} \label{timenls}
\mathrm{i}\, \partial_t \psi(t,x)
= -\partial_{xx} \psi(t,x)-|\psi(t,x)|^{p-2} \psi(t,x),
\end{align}
via the ansatz $\psi(t,x) = e^{\mathrm{i} \lambda t}u(x)$. The constraint \eqref{eqmass} is meaningful from a dynamics perspective as the mass (or charge), as well as the energy, is conserved by the NLS flow.

One of the main physical motivations to consider \eqref{timenls} on metric graph is the study of propagation of optical pulses in nonlinear optics, or of matter waves in the theory of Bose-Einstein condensates and in branched structures, such as $T$-junctions or $X$-junctions. We refer interested readers to \cite{AST3, BK, KNP, LLMetc, Noja} and to the references therein for more details. 
In addition, there are some interesting new mathematical features in metric graphs compared to the Euclidean case, and thus, the problem of existence of bound states on metric graphs attracted a lot of attention in the past decade, mainly in the \emph{subcritical} or \emph{critical regimes} (i.e., $p \in (2,6)$ or $p = 6$ respectively) in which the energy functional $E(\cdot,\G)$ is bounded from below and coercive on the mass constraint (for $\mu >0$ small when $p=6$). 

In these cases, the problem mainly studied is that of the existence of a global minimizer of the energy functional under the mass constraint.  We refer to \cite{ACFN, AST, AST2, AST4} for cases where the graph is non-compact and to \cite{CDS, Dov} where it is compact. See also \cite{DeDoGaSe, DeDoGaSeTr, DT, NP, PS, PSV, ST} for strictly related issues (problems with localized nonlinearities, combined nonlinearities, existence of critical points in absence of ground states).

In striking contrast, the supercritical regime ($p > 6$) on general graphs is less touched. Before \cite{CJS}, we are only aware of \cite{Ard} in which the assumptions allow the analysis to be reduced to the study of minimizing sequences living in a bounded subset of the constraint. In the setting of \cite{CJS} where $\G$ is compact, there always exists a constrained constant (trivial) critical point of $E(\cdot,\G)$ obtained by taking the constant function $\kappa_\mu := (\mu/\ell)^{1/2}$, where $\ell$ denotes the total length of $\G$. In  \cite{CJS} the authors showed that 
for any $0 < \mu < \mu_1$, where 
\begin{equation}\label{defmu1}
\mu_1:= \ell\left(\frac{\la_2(\G)}{p-2}\right) ^\frac2{p-2},
\end{equation}
there is a positive non-constant critical point of $E(\cdot,\G)$ in $H^1_\mu(\G)$, which is at a mountain pass level.

Note that if one tries to apply the approach of \cite{Ard} to a compact graph, it is not possible to know whether the resulting local minimizer is constant or non-constant. For this reason, the existence of non-trivial bound states in the supercritical case for general (compact) metric graphs can be seen as first essentially touched in \cite{CJS}. Later, in \cite{CGJT}, the first author with his co-authors  proved the existence of infinitely many bound states with any prescribed mass for $L^2$-supercritical nonlinear Schr\"odinger (NLS) equations with localized nonlinearities on non-compact metric graphs, coupled with the Kirchhoff conditions at the vertices. Unfortunately, the method used in \cite{CGJT} fails to work on compact graph  and it is left as an open problem to prove the multiplicity, even the existence of just two non-trivial critical points of $E(\cdot,\G)$ under the mass constraint for compact $\G$, see \cite[Remark 1.5]{CGJT}. The main motivation of this paper is to solve this open question.

\subsection*{Basic notations and main result}

For any graph, we write
$\G = (\mathcal{E}, \mathcal{V})$, where $\mathcal{E}$ is the set of
edges and $\mathcal{V}$ is the set of vertices. Each bounded edge
$\edge$ is identified with a closed bounded interval $I_\edge = [0,\ell_\edge]$ (where $\ell_\edge$ is the length of $\edge$), while each unbounded edge is identified with a closed half-line $I_\edge = \intervalco{0,+\infty}$. The length of the shortest path between points provides a natural metric (whence a topology and a Borel structure) on $\G$. A metric graph is compact if and only if it has a finite number of edges, and none of them is unbounded. We will assume throughout the paper that it is the case.

A function $u: \G \to \R$ is identified with a vector of functions $\{u_{\edge}\}_{\edge \in \mathcal{E}}$, where each $u_{\edge}$ is defined on the corresponding interval $I_\edge$ such that $u|_{\edge}=u_{\edge}$.  Endowing each edge with the Lebesgue measure, one can define $\int_\G u(x) \intd x$ and the space $L^p(\G)$ in a natural way, with norm
\begin{equation*}
  \|u\|_{L^p(\G)}^p
  = \sum_{\edge \in \mathcal{E}} \|u_\edge\|_{L^p(\edge)}^p.    
\end{equation*}
The Sobolev space $H^1(\G)$ consists of the set of continuous
functions $u: \G \to \R$ such that
$u_{\edge} \in H^1(\mathring{\edge})$ for every edge
$\edge$; the norm
in $H^1(\G)$ is defined as
\begin{equation*}
  \|u\|_{H^1(\G)}^2
  = \sum_{\edge \in \mathcal{E}} \left(\|u_{\edge}'\|_{L^2(\edge)}^2 + \|u_{\edge}\|_{L^2(\edge)}^2\right).
\end{equation*}
More details can be found in \cite{AST, AST2, BK}.

We aim to prove the multiplicity of non-constant critical points of the functional $E(\cdot,\mathcal{G})$, defined in \eqref{eqnerf}, constrained on the $L^2$-sphere
\begin{equation*}
	H_{\mu}^1(\mathcal{G})
	:= \bigl\{u\in H^1(\mathcal{G}):
	\int_{\mathcal{G}}|u|^2 dx=\mu \bigr\}.
\end{equation*}
If 
$u\in H^1_{\mu}(\mathcal{G})$ is such a critical point, it is standard to show that there exists a Lagrange multiplier $\lambda\in \mathbb{R}$ such that $u$ satisfies the following problem:
\begin{equation}\label{eqequwithkc}
	\begin{cases}
		-u''+\lambda u=|u|^{p-2}u
		&\text{on every edge } \edge \in \mathcal{E},\\[1\jot]
		\displaystyle
		\sum_{\edge \incident \vv} u_{\edge}'(\vv)=0
		&\text{at every vertex } \vv \in \mathcal{V},
	\end{cases}
\end{equation}
where $\edge \incident \vv$ means that the edge $\edge$ is incident at
$\vv$, and the notation $u_{\edge}'(\vv)$ stands for
$u'_{\edge}(0)$ or $-u'_{\edge}(\ell_{\edge})$, according to whether
the vertex $\vv$ is identified with $0$ or $\ell_{\edge}$ (namely, the
sum involves the derivatives away from the vertex $\vv$). The
second equation is the so-called \emph{Kirchhoff boundary condition}. Note that at external vertices, namely vertices which are reached by a unique edge, the Kirchhoff conditions reduce to purely Neumann conditions. 


Our strategy to find multiple non-constant solutions is to look for sign-changing critical points of $E(\cdot,\G)$ in $H^1_\mu(\G)$. Of course, any sign-changing critical point is non-constant and is different from the positive one found in \cite[Theorem 1.1]{CJS}. Our first main result is as follows.

\begin{theorem} \label{thmsc}
	Let $\G$ be any compact metric graph, and $p > 6$. There exists $\mu_2 > 0$ depending on $\G$ and on $p$ such that, for any $0 < \mu < \mu_2$, $E(\cdot,\G)$ has a sign-changing (non-constant) critical point constrained on $H^1_\mu(\G)$.
	
	Furthermore, for any $0 < \mu < \mu_2$, $E(\cdot,\G)$ has at least two pairs of non-constant critical points $\pm u_1, \pm u_2$ constrained on $H^1_\mu(\G)$, $u_1$ (respectively, $-u_1$) is positive (respectively, negative) and at a mountain pass level, and $u_2$ is sign-changing.
\end{theorem}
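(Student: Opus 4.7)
The second claim of the theorem follows from the first: \cite[Theorem 1.1]{CJS} provides, for $\mu<\mu_1$, a positive non-constant mountain-pass critical point $u_1$, and by evenness of $E(\cdot,\G)$, $-u_1$ is also critical. With $\mu_2\le\mu_1$ and $u_2$ the sign-changing critical point granted by the first claim, the pairs $\pm u_1,\pm u_2$ lie in three disjoint nodal strata (positive, negative, sign-changing) and so constitute two genuinely distinct pairs. The essential task is therefore to produce one sign-changing critical point at small mass.

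My plan is to set up a constrained min-max on $H^1_\mu(\G)$ using a carefully designed family $A$ (the ``link'') every element of which is sign-changing with $\|u^\pm\|_{L^2(\G)}\ge\delta>0$. One concrete blueprint takes two well-separated points $y_1,y_2\in\G$ (say interior points of two distinct edges), $L^2$-normalized bumps concentrated at each, and a sign parameter interpolating between positive-dominated and negative-dominated configurations. Combining these with the constant $\kappa_\mu$ and with the positive/negative mountain-pass paths of \cite{CJS}, one obtains a two-parameter family whose boundary image lies either at low energy or on the positive/negative mountain-pass trails, while in the interior a concentration parameter $t$ drives $E\to-\infty$. The associated min-max value
$$c_2\;:=\;\inf_{\tilde A}\;\sup_{u\in\tilde A}E(u,\G),$$
taken over continuous deformations $\tilde A$ of $A$ respecting the boundary conditions, should satisfy $c_2>c_1$, where $c_1$ is the positive mountain-pass level of $u_1$; this follows by a linking-type argument in which every admissible $\tilde A$ is forced, by topology, to meet a ``dual'' set on which $E>c_1$.

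Standard constrained deformation theory then produces a Palais--Smale sequence $(u_n)\subset H^1_\mu(\G)$ at level $c_2$, with bounded Lagrange multipliers $\lambda_n$ and (by the preservation property built into the link) $\|u_n^\pm\|_{L^2(\G)}\ge\delta$. Since $\G$ is compact, the embedding $H^1(\G)\hookrightarrow L^p(\G)$ is compact, so once $(u_n)$ is $H^1$-bounded it converges strongly (up to subsequence) to a solution $u^*\in H^1_\mu(\G)$ of \eqref{eqequwithkc}. The key analytic input is $H^1$-boundedness of $(u_n)$: in the $L^2$-supercritical regime this is subtle, but combining the upper bound $c_2$, the mass constraint, and a Pohozaev-type identity on $\G$ rules out blow-up precisely when $\mu$ is small enough, which is what pins down $\mu_2$. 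Strong $L^2$-convergence then transfers the bound $\|u_n^\pm\|_{L^2}\ge\delta$ to $u^*$, so $u^*$ is genuinely sign-changing.

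The main obstacle is the deformation step: one needs a pseudo-gradient flow on $H^1_\mu(\G)$ that decreases $E(\cdot,\G)$, stays tangent to the $L^2$-sphere, \emph{and} preserves the lower bound $\|u^\pm\|_{L^2(\G)}\ge\delta$ defining $A$, so that the linking inequality $c_2>c_1$ can actually be exploited. The naive constrained pseudo-gradient used to obtain positive mountain-pass critical points in \cite{CJS} ignores nodal information; a competitor could flow into the sign-definite stratum and collapse the topological obstruction. The ``new kind of link'' advertised in the abstract presumably combines a carefully chosen topology on $A$ with cutoffs or barriers around the positive and negative sign-definite strata, producing a flow that preserves genuine sign-changing while remaining compatible with the mass constraint. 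Making this construction work cleanly is where I expect the technical heart of the proof to lie.
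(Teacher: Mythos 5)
Your reduction of the second claim to the first is exactly what the paper does, and your identification of the two technical bottlenecks (boundedness of Palais--Smale sequences and a nodal-preserving deformation) is accurate. However, both bottlenecks are left unresolved or resolved incorrectly, so the proposal has genuine gaps. First, the boundedness of Palais--Smale sequences cannot be obtained from ``a Pohozaev-type identity on $\G$'': on a compact metric graph there is no dilation structure, and the paper explicitly points out that Pohozaev-manifold arguments fail in this setting. The paper's substitute is structural rather than analytic: the entire min-max class is confined to the bounded set $\B_{\rho^*}^\mu=\{u\in H^1_\mu(\G):\int_\G|u'|^2<\rho^*\}$, with $\rho^*$ chosen (via the Gagliardo--Nirenberg inequality of Lemma \ref{lemgn}) so that $\inf_{\U_{\rho^*}^\mu}E\ge M_2$ while the linking sets have energy $<M_2$; the decreasing flow then can never reach the boundary $\U_{\rho^*}^\mu$, and boundedness is automatic. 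This also rules out your proposed link built from concentrating bumps: in the supercritical regime such configurations have large kinetic energy and would leave $\B_{\rho^*}^\mu$, destroying the energy barrier. The paper instead links the two constants $\s_1=\{\pm\sqrt{\mu/\ell}\}$ to $\s_1^\perp$ (the $L^2$-orthogonal complement of constants on the sphere, whose elements are automatically sign-changing), using the compact set $Q_2\subset\mathrm{span}\{\phi_1,\phi_2\}\cap H^1_\mu(\G)$, which has kinetic energy at most $\la_2(\G)\mu$ and hence stays inside $\B_{\rho^*}^\mu$ for small $\mu$. No comparison with the mountain-pass level $c_1$ of \cite{CJS} is needed or made; the separation used is $\bar c_1>\underline{c}_1=\sup_{\s_1}E$, which keeps the flow fixed on $\s_1$.

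Second, the deformation step you flag as ``where the technical heart lies'' is indeed the heart, and it is not enough to gesture at ``cutoffs or barriers around the sign-definite strata.'' The paper proves that the cone neighborhoods $(\pm\p)_\nu\cap\B^{M_2}$ are invariant under the negative gradient flow by (i) extending the Br\'ezis--Martin flow-invariance theorem to vector fields tangent to the mass sphere (Proposition \ref{propbmmanifold} and Corollary \ref{corbm}), (ii) verifying the tangency condition via convexity of $(\pm\p)_\nu$ (Lemma \ref{lemconvex}), and (iii) showing that the operator $G(u)=(-\tfrac{d^2}{dx^2}+1)^{-1}(|u|^{p-2}u+\la_u u)$ maps $(\pm\p)_\nu\cap\B_{\rho^*}^\mu$ into $(\pm\p)_{\nu/2}$. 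Step (iii) requires controlling the sign and size of the $u$-dependent Lagrange multiplier $\la_u$ (shown to satisfy $0\le\la_u\le\bar\la$ on $\B_{\rho^*}^\mu$ for small $\mu$), which is precisely the feature that distinguishes the constrained problem from the unconstrained one and is absent from your outline. The min-max value is then taken over $A\cap S^*(\nu)$ rather than over all of $A$, so that the invariance of $D^*(\nu)$ converts the linking obstruction into a sign-changing critical point. Without these ingredients the argument does not close.
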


\begin{remark}
	Theorem \ref{thmsc} is not a perturbation result, the value $\mu_2$ will not be obtained by any limit process, and can be explicitly estimated. We refer to the proof of Theorem \ref{thmsc} in Section \ref{secproof} for more details.
\end{remark}

The novelty of Theorem \ref{thmsc} lies in the existence of sign-changing critical points. A main tool in our proof is a new kind of link contained in an open bounded subset $\B_\rho^\mu$ of $H^1_\mu(\G)$. We note that a similar idea was used by the second author and his co-author in \cite{SZ} to find sign-changing solutions with prescribed mass for the Br\'ezis-Nirenberg problem. 
The subset $\B_\rho^\mu$ is given
$$\B_\rho^\mu = \bigl\{u \in H^1_\mu(\G): \int_\G|u'|^2dx < \rho\bigr\}.$$
All our analysis will be developed in  $\B_\rho^\mu$ where Palais-Smale sequences are, trivially, bounded, something which may not be true outside of $\B_\rho^\mu$. Since the graph $\G$ is compact, the Palais-Smale condition thus holds on the closure of $\B_\rho^\mu$, see Corollary \ref{corpscon}. For $\mu >0$ sufficiently small, we fix a convenient $\rho >0$ and define inside $\B_\rho^\mu$ a min-max structure such that the min-max level is strictly below the infimum of the energies of functions on $\partial \B_\rho^\mu$; see Proposition \ref{propinf} and Section \ref{seclink} for more details. Thus, if we find a critical point at the min-max level, it will not belong to $\partial \B_\rho^\mu$ and will be a critical point constrained on $H^1_\mu(\G)$ without additional constraints. 

In unconstrained problems, while the existence of critical points with sign change has been widely studied (see, for example, \cite{BLW, LS, LWW, RTZ, SchZ1, SchZ, ZZ, Zou}), it is known to be more difficult than simply finding a critical point without any sign requirement. The existence of sign-changing constrained critical points has been much less studied. 
Regarding $L^2$-constrained problems set on $\R^N$, we mention \cite{JL1}, where the existence of sign-changing nonradial solutions with prescribed mass, for some NLS on $\R^N$ with $N \geq 4$ in a mass subcritical case, was first observed. Related results were obtained in \cite{JL2} in a mass supercritical setting.
We remark that the methods in \cite{JL1,JL2} strongly depend on the symmetry of $\R^N$ and the Pohozaev manifold was used in the mass supercritical case, for which they fail to work for $L^2$-supercritical NLS on bounded domains and on compact metric graphs. For constrained problems on bounded domains, in a recent work \cite{DeDoGaSe2} an action approach was used to establish the existence of one sign-changing solution with prescribed mass. 
Notably, the powerful descending flow technique, which has been widely developed for finding sign-changing solutions in unconstrained problems, has been little used in a constrained $L^2$ case. A key step here is to find invariant subsets with respect to some negative gradient flow on the constraint. In \cite{TaTe}, see also \cite{FeZhZhZh}, this was successfully implemented in the problems considered. If one hopes to develop a somehow general approach, which up to our knowledge is missing, a first observation is that the classical Br\'ezis-Martin result cannot be used directly in the presence of a mass constraint.

In Section \ref{secinv}, we extend the Br\'ezis-Martin result (see Proposition \ref{propbm}) to an abstract setting in which our constrained problem is included; see Proposition \ref{propbmmanifold} and Corollary \ref{corbm} for more details.
 We also provide Lemma \ref{lemconvex} to check a key assumption used by Proposition \ref{propbmmanifold}. We believe Proposition \ref{propbmmanifold}, Corollary \ref{corbm}, and Lemma \ref{lemconvex} are of independent interest. They constitute a general framework to prove that some subset is invariant with respect to a flow on a manifold. This framework can be used to establish the existence of sign-changing bound states with prescribed mass in some other problems. 
In fact, it has already been used in \cite{SZ}.

The gradient $\nabla E(\cdot, \G)$ of $E(\cdot,\G)$ as a constrained functional on $H^1_\mu(\G)$ belongs to the tangent space of $H^1_\mu(\G)$ at the point $u \in H^1_\mu(\G)$ and is given by
$\nabla E(u,\G) = u - (-\frac{d^2}{dx^2} +1)^{-1}(|u|^{p-2}u + \la_uu).$
In view of Lemma \ref{lemconvex}, to check, in our problem, the assumptions in Proposition \ref{propbmmanifold} and Corollary \ref{corbm}, we need to determine the sign of $\la_u$. This is a main difference with the unconstrained case where $\la_u$ is prescribed and independent of $u$.

Our process of searching for sign-changing critical points can be continued after finding the first one. In fact, we can obtain as many sign-changing critical points of $E(\cdot,\G)$ in $H^1_\mu(\G)$ as we want at the expense of possibly reducing $\mu$. Our second multiplicity result is as follows.

\begin{theorem} \label{thmsc2}
	Let $\G$ be any compact metric graph, and $p > 6$. For any $j \in \N$, there exists $\mu_j > 0$ depending on $\G$ and on $p$ such that, for any $0 < \mu < \mu_j$, $E(\cdot,\G)$ has at least $j$ pairs of non-constant critical points $\pm u_1, \pm u_2, \cdots, \pm u_j$ constrained on $H^1_\mu(\G)$. Furthermore, $u_1$ (respectively, $-u_1$) is positive (respectively, negative) and the others are sign-changing.
\end{theorem}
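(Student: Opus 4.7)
The plan is to iterate and extend the min-max construction used for Theorem \ref{thmsc}, replacing the single ``new kind of link'' by a $\Z_2$-equivariant min-max scheme indexed by the Krasnoselskii genus, while keeping all the analysis inside $\B_\rho^\mu$ so that the Palais--Smale condition (Corollary \ref{corpscon}) and the invariance of the positive/negative cones (provided by Proposition \ref{propbmmanifold} together with Lemma \ref{lemconvex}) remain available. The positive critical point $u_1$ is the one given in \cite[Theorem 1.1]{CJS}, so only the $j-1$ sign-changing critical points $u_2,\dots,u_j$ need to be produced.

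For $\delta>0$ small fix the thickened cones $P^{\pm}_\delta$ of nearly positive/negative functions in $H^1_\mu(\G)$, and let $\Sigma$ denote the family of closed, symmetric ($A=-A$) subsets of $\B_\rho^\mu \setminus (P^+_\delta \cup P^-_\delta)$. For each integer $k \geq 2$ consider the class
\[
\Gamma_k = \bigl\{ A \in \Sigma : \gamma(A) \geq k \bigr\},
\]
where $\gamma$ is the Krasnoselskii genus, and set
\[
c_k = \inf_{A \in \Gamma_k} \sup_{u \in A} E(u,\G).
\]
Since $A \mapsto -A$ leaves both $\B_\rho^\mu$ and $E$ invariant, and since Proposition \ref{propbmmanifold} guarantees that the $\Z_2$-equivariant pseudo-gradient flow preserves $P^{\pm}_\delta$ (hence also $\B_\rho^\mu \setminus (P^+_\delta \cup P^-_\delta)$) as long as the Lagrange multiplier has the right sign along trajectories inside $\B_\rho^\mu$, the family $\Gamma_k$ is stable under the deformation.

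The hard part is to show that, after a possibly drastic reduction of $\mu$, each level $c_k$ for $k=2,\dots,j$ lies strictly below the infimum $\iota_\rho := \inf_{\partial \B_\rho^\mu} E(\cdot,\G)$ (see Proposition \ref{propinf}), so that the flow inside $\B_\rho^\mu$ cannot push min-max sequences to $\partial \B_\rho^\mu$, and the resulting Palais--Smale sequences converge in $\B_\rho^\mu$ to genuine constrained critical points. To this end, I would construct explicit $k$-dimensional admissible test sets using the first $k+1$ Kirchhoff--Laplacian eigenfunctions $\varphi_1=\kappa_1,\varphi_2,\dots,\varphi_{k+1}$ on $\G$: given $V_k = \mathrm{span}\{\varphi_2,\dots,\varphi_{k+1}\}$ (orthogonal to constants), set
\[
A_k^\mu = \Bigl\{ \sqrt{\mu}\,\frac{\kappa_1 + t\,w}{\|\kappa_1 + tw\|_{L^2(\G)}} : w \in V_k,\ \|w\|_{L^2(\G)}=1,\ |t|=t_0 \Bigr\}.
\]
For $t_0$ fixed and $\mu \to 0^+$, the kinetic term scales like $\mu$ while the nonlinear term scales like $\mu^{p/2}$ with $p>6$, so both terms vanish and one verifies $\gamma(A_k^\mu)\geq k$, $A_k^\mu \subset \B_\rho^\mu \setminus (P^+_\delta \cup P^-_\delta)$, and $\sup_{A_k^\mu} E < \iota_\rho$; this determines $\mu_j$. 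Once $c_2 \leq \dots \leq c_j < \iota_\rho$ are available, a standard equivariant deformation argument combined with the genus-drop mechanism (if $c_{k}=c_{k+1}=\dots=c_{k+m}=c$, then $\gamma(K_c)\geq m+1$, where $K_c$ is the set of critical points at level $c$ inside $\B_\rho^\mu \setminus (P^+_\delta \cup P^-_\delta)$) produces $j-1$ distinct pairs of sign-changing critical points. Adding $\pm u_1$ yields the $j$ pairs claimed.

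The main obstacle I anticipate is the flow-invariance of the thickened cones in the constrained setting: the Lagrange multiplier $\lambda_u$ appearing in $\nabla E(u,\G)=u-(-d^2/dx^2+1)^{-1}(|u|^{p-2}u+\lambda_u u)$ depends on $u$ and its sign must be controlled along the flow lines used for min-max at each level $c_k$. Verifying the hypotheses of Proposition \ref{propbmmanifold} and Lemma \ref{lemconvex} \emph{uniformly} over the $j$ levels, rather than a single one as in Theorem \ref{thmsc}, is the delicate point; choosing $\mu_j$ small enough so that every critical point found has controlled $L^2$-mass and $H^1$-norm inside $\B_\rho^\mu$ should suffice to make $\lambda_u$ behave correctly on the relevant subset.
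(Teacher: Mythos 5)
Your genus-based scheme has a genuine gap at its central step: the claim that ``Proposition \ref{propbmmanifold} guarantees that the flow preserves $P^{\pm}_\delta$ (hence also $\B_\rho^\mu \setminus (P^+_\delta \cup P^-_\delta)$)'' is false. Forward invariance of the thickened cones under the descending flow says that trajectories \emph{starting inside} the cones stay there; it says nothing about trajectories starting outside, and indeed the whole point of the cone-invariance mechanism is that nearly signed functions get absorbed into $P^{\pm}_\delta$ and flow toward signed critical points. The complement $\B_\rho^\mu \setminus (P^+_\delta \cup P^-_\delta)$ is therefore \emph{not} invariant, your class $\Gamma_k$ of symmetric subsets of that complement is not stable under the deformation, and the standard genus min-max machinery (including the genus-drop multiplicity argument, which needs an equivariant deformation that preserves the admissible class) does not apply as stated. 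There is also a secondary defect: the test set $A_k^\mu$ you build is not symmetric (every element has constant component $+\sqrt{\mu}\kappa_1/\|\cdot\|$, so $A_k^\mu \neq -A_k^\mu$), hence $\gamma(A_k^\mu)$ is undefined; and if you symmetrize by including all of the unit sphere of $\mathrm{span}\{\varphi_1,\dots,\varphi_{k+1}\}$ in $H^1_\mu(\G)$, the set meets $\pm\p$ at $\pm\sqrt{\mu}\varphi_1$ and so cannot lie in $\B_\rho^\mu \setminus (P^+_\delta \cup P^-_\delta)$ either. These two problems are exactly the tension the paper resolves differently.

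The paper's route keeps your overall architecture (work in $\B_{\rho^*}^\mu$, use Corollary \ref{corpscon}, use cone invariance from Proposition \ref{propinva}) but replaces the symmetric genus classes by non-symmetric linking classes $\cl_{k-1}$: compact sets $A$ containing $\s_{k-1}$ that are \emph{allowed} to intersect the cones, linked to $\s_{k-1}^\perp$ via a degree/homotopy argument, with the min-max value defined as $\inf_A \sup_{A \cap S^*(\nu)} E$ so that only the part of $A$ away from the cones is measured. Because the admissible sets may enter $D^*(\nu)$, the deformed set $\eta(t,A)$ remains admissible, and the contradiction $\eta(2\epsilon_1,A)\subset D^*(\nu)\cup\B^{c-\epsilon_1}$ versus the linking inequality $c\leq \sup_{\eta(2\epsilon_1,A)\cap S^*(\nu)}E$ produces a sign-changing critical point at each level. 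Distinctness is then obtained not by genus drop but by the explicit energy separation $c_{k_i-1}\leq \underline{c}_{k_{i+1}-1}<\bar{c}_{k_{i+1}-1}\leq c_{k_{i+1}-1}$ across consecutive distinct eigenvalues. If you want to salvage a genus approach you would need a relative or cone-adapted genus (as in Zou's framework), which is a substantially different and more delicate construction than what you have written.
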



The proof of Theorem \ref{thmsc2} is quite similar to the one of Theorem \ref{thmsc}. We combine discussions on min-max structures, descending flow techniques, and  we establish the invariance of some subsets of $H^1_\mu(\G)$ with respect to some negative gradient flow. An additional difficulty is to make sure that our approach provides distinct critical points. This is done by estimating the energy levels of the sign-changing critical points, see step 4 in the proof of Theorem \ref{thmsc2}.
\smallskip

In the last part of the paper, we discuss the limiting behaviour of the solutions obtained in Theorem \ref{thmsc2}, as the mass $\mu \to 0$.  First we observe that all our solutions are obtained inside the sets $\B^{M_2} \subset H^1_{\mu}(\G)$ on which the $H^1(\G)$ norm is bounded uniformly with respect to $\mu >0$. This implies that the norm $H^1(\G)$ and the energy of all our solutions are uniformly bounded. This contrasts sharply with \cite{CJS} where the norm and energy of the solution found in \cite[Theorem 1.1]{CJS} go to infinity when $\mu \to 0$. Actually, it is possible to describe very precisely what happens when $\mu \to 0$.  Following the terminology introduced by C. A. Stuart \cite{St1,St2}, we say that $\lambda^* \in \R$ is a bifurcation point 
of \eqref{eqequwithkc} if there exists a sequence $\{ (\lambda_k, u_k) \} \subset \R \times H^1(\G)$ such that $u_k \not \equiv 0$ solves \eqref{eqequwithkc} with $\lambda = \lambda_k$, $-\lambda_k \to \lambda^*$  and $||u_k||_{H^1(\G)} \to 0$. 	
\smallskip

Let $0=\la_1(\G) < \la_2(\G) \leq \la_3(\G) \leq \cdots $ be the eigenvalues of $-\frac{d^2}{dx^2}$ on the compact metric graph $\G$ with Kirchhoff condition at the vertices.
Based on the process of building our solutions, we obtain

\begin{theorem} \label{thmsc3}
Let  $\G$ be any compact metric graph, and $p > 6$. Then the set of bifurcation point of \eqref{eqequwithkc} is $\{\la_j(\G): j \in \N \}$.
\end{theorem}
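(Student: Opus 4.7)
The plan is to establish Theorem \ref{thmsc3} by proving the two set inclusions separately.

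\textbf{Direction 1: every bifurcation point is an eigenvalue.} Given a bifurcation point $\la^*$ with witnessing sequence $(\la_k, u_k)$, I would set $v_k := u_k/\|u_k\|_{H^1(\G)}$, so that $\|v_k\|_{H^1(\G)} = 1$ and
\begin{equation*}
-v_k'' + \la_k v_k = |u_k|^{p-2} v_k \qquad \text{on every edge, with Kirchhoff conditions.}
\end{equation*}
Since $\G$ is compact, $H^1(\G)\hookrightarrow L^\infty(\G)$ continuously and $H^1(\G)\hookrightarrow L^2(\G)$ compactly; hence $\||u_k|^{p-2}\|_{L^\infty(\G)} \to 0$, and up to a subsequence $v_k\rightharpoonup v$ in $H^1(\G)$ and $v_k\to v$ in $L^2(\G)$. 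Testing the equation against $v_k$ gives $\int_\G|v_k'|^2+\la_k\int_\G v_k^2=o(1)$, which combined with $\int_\G|v_k'|^2+\int_\G v_k^2=1$ yields $\int_\G v_k^2\to 1/(1+\la^*)$. This forces $\la^*>-1$ and $v\not\equiv 0$; passing to the limit in the weak form of the equation shows $v$ solves $-v''=\la^* v$ with Kirchhoff conditions, so $\la^*\in\{\la_j(\G):j\in\N\}$.

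\textbf{Direction 2: every eigenvalue is a bifurcation point.} Fix $j\in\N$. For $\mu\in(0,\mu_j)$, Theorem \ref{thmsc2} provides critical points $u_1^\mu,\dots,u_j^\mu$ of $E(\cdot,\G)|_{H^1_\mu(\G)}$ inside $\B_{\rho(\mu)}^\mu$, where the threshold $\rho(\mu)$ can be arranged so that $\rho(\mu)\to 0$ as $\mu\to 0$. Therefore
\begin{equation*}
\|u_i^\mu\|_{H^1(\G)}^2=\mu+\int_\G|(u_i^\mu)'|^2\leq \mu+\rho(\mu)\longrightarrow 0 \quad\text{as }\mu\to 0,
\end{equation*}
so each sequence $(u_i^\mu)$ witnesses bifurcation and, by Direction 1, its Lagrange multiplier $\la_i^\mu$ converges along a subsequence to $-\la_{k(i)}(\G)$ for some index $k(i)$. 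To show every $\la_j(\G)$ is hit, I would upper-bound the $i$-th min-max level $c_i^\mu$ by testing with $L^2$-renormalized elements of the span of the first $i$ eigenfunctions; a Taylor expansion of $E$ at the origin gives $c_i^\mu\leq \tfrac12\la_i(\G)\mu+O(\mu^{p/2})$. Testing the equation against $u_i^\mu$ and using the definition of $E$ yields
\begin{equation*}
-\la_i^\mu=\frac{2c_i^\mu}{\mu}+O\Bigl(\frac{\|u_i^\mu\|_{L^p(\G)}^p}{\mu}\Bigr),
\end{equation*}
where the error is $o(1)$ since $u_i^\mu\to 0$ in $L^\infty(\G)$. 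The strict separation of the sign-changing min-max levels established in step 4 of the proof of Theorem \ref{thmsc2} forces distinct limits, pinning each $-\la_i^\mu$ to $\la_i(\G)$ and thus exhausting the full eigenvalue set as $j$ ranges over $\N$.

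\textbf{Main obstacle.} Direction 1 is fairly routine once the compactness afforded by $\G$ being compact is exploited. The real difficulty lies in Direction 2: identifying precisely \emph{which} eigenvalue each critical point bifurcates from. This requires a sharp upper bound $c_i^\mu\leq \tfrac12\la_i(\G)\mu+o(\mu)$ via carefully chosen trial functions from eigenspaces, matched with a lower separation argument that ensures different sign-changing critical points bifurcate from different eigenvalues in the limit $\mu\to 0$.
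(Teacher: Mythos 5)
Your Direction 1 is fine (the paper simply cites classical bifurcation theory for the inclusion $B_0\subset\{\la_j(\G)\}$, and your direct compactness argument is a legitimate substitute), and your overall strategy for Direction 2 — use the solutions of Theorem \ref{thmsc2} and show their Lagrange multipliers converge to the eigenvalues — is the paper's strategy. But two steps you rely on are not actually established. First, you assert that ``the threshold $\rho(\mu)$ can be arranged so that $\rho(\mu)\to 0$'' and deduce $\|u_i^\mu\|_{H^1(\G)}\to 0$ from membership in $\B^\mu_{\rho(\mu)}$. In the construction of the paper $\rho^*=\min\{b,b\mu^{-(p+2)/(p-6)}\}=b$ is a \emph{constant} for $\mu\le 1$, so this would require reworking Proposition \ref{propinf}, Lemma \ref{lemlnotemp}, Lemma \ref{lemG(u)}, etc.\ with a $\mu$-dependent radius; the paper instead obtains $\int_\G|u_{\mu,j}'|^2/\mu\to\la_{k_j}$ (hence the vanishing of the $H^1$ norm) as a \emph{consequence} of the two-sided energy estimate, not from the definition of the working set. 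Second, and more seriously, the identification of the limit requires the matching lower bound $c_{k-1}\ge \tfrac12\la_k(\G)\mu-o(\mu)$. You invoke ``the strict separation of the sign-changing min-max levels'' from Step 4, but that separation ($c_{k_i-1}\le\underline{c}_{k_{i+1}-1}<c_{k_{i+1}-1}$) only yields that the levels are distinct and ordered; it does not pin $c_{k-1}/\mu$ to $\tfrac12\la_k(\G)$. The missing ingredient is precisely the estimate $c_{k-1}\ge\bar c_{k-1}=\inf_{\s_{k-1}^\perp\cap\B^{M_2}}E(\cdot,\G)\ge g(\la_k(\G)\mu)-\ell^{(2-p)/2}\mu^{p/2}$ (as in Lemma \ref{lemvaluesepe} and \eqref{ajout10}), which uses the Poincar\'e-type inequality $\int_\G|u'|^2\ge\la_k(\G)\mu$ on $\s_{k-1}^\perp$ together with the Gagliardo--Nirenberg bound. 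You flag this as ``the real difficulty'' but do not supply it, so the core of Direction 2 remains open in your write-up.

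A further, smaller, point: your scheme cannot produce $\la_1(\G)=0$. The positive solution $u_1^\mu$ of Theorem \ref{thmsc} lives at a mountain-pass level outside $\B^\mu_{\rho^*}$, and its $H^1$ norm does \emph{not} tend to $0$ as $\mu\to 0$ (the paper explicitly contrasts this with the sign-changing solutions), so it witnesses no bifurcation. The paper treats $\la_1(\G)$ separately, via its simplicity and classical bifurcation theory (alternatively, the family of constant solutions $\kappa_\mu$ does the job). You should also be careful with indexing in the presence of multiple eigenvalues: the min-max construction is only carried out at indices $k$ with $\la_{k-1}(\G)<\la_k(\G)$, which is enough for the statement about the \emph{set} of bifurcation points but does not literally give ``each $-\la_i^\mu$ pinned to $\la_i(\G)$'' for every $i$.
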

From classical bifurcation theory, any bifurcation point of \eqref{eqequwithkc} must be an eigenvalue, namely belong to $\{\la_j(\G): j \in \N \}$, and any simple eigenvalue is a bifurcation point. Theorem \ref{thmsc3} states that any eigenvalue, regardless of its multiplicity, is a bifurcation point. This result is of independent interest. \smallskip

Finally, let us point out that if the problem here is set on a metric graph what is really essential in our approach is that the operator $-\frac{d^2}{dx^2}$, considered with the Kirchhoff condition, has a discrete spectrum (due here to the compactness of the graph). Our approach could be used to handle other type of problems set on a compact domain, typically  NLS equations on a bounded domain $\Omega \subset \R^N$. \smallskip

The paper is organized as follows. In Section \ref{secpre}, we give some preliminary results and prepare the introduction of our working set $\B^{M_2}$. Section \ref{seclink} is devoted to introduce our link and the implementation of the minimax structure.  In Section \ref{secinv}, we present our abstract approach to negative gradient flow on a constraint. and also establish the key Proposition \ref{propinva}. In Section \ref{secproof}, we conclude the proof of Theorem \ref{thmsc}. Section \ref{secproof2} is devoted to the proofs of Theorem \ref{thmsc2} and of Theorem \ref{thmsc3}.

\vskip0.1in
\noindent{\bf Notations}
\begin{itemize}
\item $\B_\rho^\mu = \bigl\{u \in H^1_\mu(\G): \int_\G|u'|^2dx < \rho\bigr\}.$
\item $\U_\rho^\mu = \bigl\{u \in H^1_\mu(\G): \int_\G|u'|^2dx = \rho\bigr\}.$
\item Let $0=\la_1(\G) < \la_2(\G) \leq \la_3(\G) \leq \cdots $ be the eigenvalues of $-\frac{d^2}{dx^2}$ on the compact metric graph $\G$ with Kirchhoff condition at the vertices, and let $\phi_1, \phi_2, \phi_3, \cdots$ be the corresponding eigenfunctions with $\int_\G|\phi_i|^2dx = 1$ for all $i \in \N$. Then we set
\begin{align*}
	\s_1 = span\{\phi_1\} \cap H^1_\mu(\G) = \{\pm \sqrt{\mu/l}\}, \quad \s_1^\perp = (span\{\phi_1\})^\perp \cap H^1_\mu(\G).
\end{align*}
\item $\K = \bigl\{u \in H^1_\mu(\G): (E|_{H^1_\mu(\G)})'(u,\G) = 0\bigl\}.$
\item  $\K[\alpha,\beta] = \bigl\{u \in H^1_\mu(\G): (E|_{H^1_\mu(\G)})'(u,\G) = 0, \alpha \leq E(u) \leq \beta\bigr\}.$
\item $\pm \p = \bigl\{u \in H^1(\G): \pm u \geq 0\bigl\}.$
\item $(\pm \p)_\nu = \bigl\{u \in H^1(\G): \inf_{w \in \pm \p}\|u -w\|_{H^1(\G)} \le \nu\bigr\}$.
\item  $D^*(\nu) =  \p_\nu \cup(-\p)_\nu$.
\item  $S^*(\nu) = H^1(\G) \backslash D^*(\nu)$.
\item  For $U \subset H^1_\mu(\G)$ and $\delta >0$ let $U_{\delta}$ be 
	\begin{align}
		U_{\delta} := \bigl\{v \in H^1_\mu(\G): dist(v, U) < \delta\bigr\}.
	\end{align}
\item  For $M \in \R$, $\B^M := \{u \in B_{\rho^*}^{\mu} : E(u, \G) < M\}$ with $\rho^*  := \min\{b,b\mu^{-\frac{p+2}{p-6}}\} \text{ where } b = \left( \frac2{(p-2)K}\right)^{\frac4{p-6}},$ is introduced in Proposition \ref{propinf}.
\item $\B^{M_2}$ is defined in \eqref{ajout2}.
\end{itemize}

\section{Preliminary results} \label{secpre}

At the start of this section, we recall a result in \cite{Dov}, which shows that any bounded (PS) sequence of $E(\cdot,\G)$ constrained on $H^1_\mu(\G)$ has a convergent subsequence in $H^1_\mu(\G)$.

\begin{lemma} \cite[Proposition 3.1]{Dov} \label{lemcompact}
	Assume that $\G$ is a compact graph and $\{u_n\} \subset H^1_\mu(\G)$ is a bounded
	(PS) sequence of $E(\cdot,\G)$ constrained on $H^1_\mu(\G)$. Then there exists $u \in H^1_\mu(\G)$ such that, up to a subsequence, $u_n \to u$ strongly in $H^1(\G)$.
\end{lemma}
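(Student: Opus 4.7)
The plan is to run the standard three-step compactness argument (extract a weak limit, control the Lagrange multipliers, upgrade to strong convergence), with the key input being that on a compact metric graph $\G$ the Sobolev embedding $H^1(\G) \hookrightarrow L^q(\G)$ is compact for every $q \in [2,\infty]$.

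First, since $\{u_n\}$ is bounded in $H^1(\G)$, I pass to a subsequence (still denoted $\{u_n\}$) with $u_n \rightharpoonup u$ weakly in $H^1(\G)$. Compactness of the embedding then yields $u_n \to u$ strongly in $L^q(\G)$ for every $q \in [2,\infty]$; in particular $\|u\|_{L^2}^2 = \mu$, so the candidate limit belongs to $H^1_\mu(\G)$.

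Second, I identify and control the Lagrange multipliers. Being a constrained (PS) sequence, there exist $\lambda_n \in \R$ with
\begin{equation*}
\int_\G u_n' v' \intd x - \int_\G |u_n|^{p-2} u_n v \intd x - \lambda_n \int_\G u_n v \intd x = o(1)\,\|v\|_{H^1(\G)}, \quad \forall v \in H^1(\G).
\end{equation*}
Testing with $v=u_n$ and using $\int_\G u_n^2\intd x=\mu$ gives
\begin{equation*}
\mu\,\lambda_n = \int_\G |u_n'|^2\intd x - \int_\G |u_n|^p\intd x + o(1),
\end{equation*}
and both terms on the right are bounded ($\|u_n\|_{H^1}$ is bounded and $\|u_n\|_{L^p}$ converges). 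Hence $\{\lambda_n\}$ is bounded, and after a further subsequence $\lambda_n \to \lambda$ for some $\lambda \in \R$.

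Third, I pass to the limit in the test identity with a fixed $v \in H^1(\G)$, using the strong $L^p$ and $L^2$ convergence to handle the nonlinear and linear terms respectively, and the weak $H^1$ convergence to handle the derivative term. This yields
\begin{equation*}
\int_\G u' v' \intd x - \int_\G |u|^{p-2} u v \intd x - \lambda \int_\G u v \intd x = 0, \qquad \forall v \in H^1(\G),
\end{equation*}
and choosing $v=u$ gives $\int_\G|u'|^2 \intd x = \int_\G|u|^p\intd x + \lambda\mu$. Comparing with the identity derived above for $u_n$ and using $\int_\G |u_n|^p\intd x \to \int_\G |u|^p\intd x$ together with $\lambda_n \to \lambda$, I conclude $\int_\G|u_n'|^2\intd x \to \int_\G|u'|^2\intd x$. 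Combined with the strong $L^2$ convergence, the $H^1(\G)$ norms converge, which with weak convergence upgrades to strong convergence $u_n \to u$ in $H^1(\G)$.

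The only potentially delicate point is the boundedness of $\{\lambda_n\}$, since the multipliers are sequence-dependent; but this is resolved immediately by testing with $u_n$ and exploiting the $L^2$ constraint, so no serious obstacle arises. The remainder is a routine weak-to-strong upgrade powered by the compact Sobolev embedding, which is available precisely because $\G$ is compact.
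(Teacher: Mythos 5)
Your proof is correct and follows the standard argument: the paper itself gives no proof of this lemma, simply citing \cite[Proposition 3.1]{Dov}, and your three steps (weak limit via the compact embedding $H^1(\G)\hookrightarrow L^q(\G)$, boundedness of the multipliers $\lambda_n$ by testing with $u_n$ on the $L^2$-sphere, and the weak-to-strong upgrade via convergence of the $H^1$ norms) are exactly the expected route. No gaps.
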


Since (PS) sequences are, in general, unbounded, we introduce the following bounded subsets of the mass constraint for $\rho > 0$:
\begin{align*}
	\B_\rho^\mu = \bigl\{u \in H^1_\mu(\G): \int_\G|u'|^2dx < \rho\bigr\}, \quad \U_\rho^\mu = \bigl\{u \in H^1_\mu(\G): \int_\G|u'|^2dx = \rho\bigr\}.
\end{align*}
Similar bounded subsets were introduced for supercritical NLS with a partial confinement in \cite{BBJV} and in the bounded domain case in \cite{PV} to search for local minimizers.  

As a corollary of Lemma \ref{lemcompact}, any (PS) sequence of $E(\cdot,\G)$ in $\B_\rho^\mu$ has a convergent subsequence.

\begin{corollary} \label{corpscon}
	Assume that $\G$ is a compact graph and $\{u_n\} \subset \B_\rho^\mu$ is a (PS) sequence of $E(\cdot,\G)$ constrained on $H^1_\mu(\G)$. Then there exists $u \in H^1_\mu(\G)$ such that, up to a subsequence, $u_n \to u$ strongly in $H^1_\mu(\G)$. Furthermore, if 
	$\limsup_{n \to \infty}E(u_n,\G) < \inf_{\U_\rho^\mu}E(\cdot,\G)$, then $u \in \B_\rho^\mu$.
\end{corollary}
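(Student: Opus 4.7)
The plan is to deduce both assertions directly from Lemma \ref{lemcompact}, the only nontrivial input being the verification that the given Palais--Smale sequence is bounded in $H^1(\G)$.

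First I would observe that the $H^1(\G)$ norm of any element of $\B_\rho^\mu$ is automatically controlled: by definition $\int_\G |u_n'|^2\,\mathrm dx<\rho$, and the mass constraint gives $\int_\G |u_n|^2\,\mathrm dx=\mu$, so $\|u_n\|_{H^1(\G)}^2<\rho+\mu$. Hence $\{u_n\}$ is a bounded constrained (PS) sequence on $H^1_\mu(\G)$ and Lemma \ref{lemcompact} applies, producing (along a subsequence) a limit $u\in H^1_\mu(\G)$ with $u_n\to u$ strongly in $H^1(\G)$. Since the $L^2$ mass is preserved in the limit, $u\in H^1_\mu(\G)$ as required for the first assertion.

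For the second, ``furthermore'' assertion, I would argue by contradiction. Strong $H^1$ convergence of $u_n\to u$ yields $\int_\G |u_n'|^2\,\mathrm dx \to \int_\G |u'|^2\,\mathrm dx$, so the limit must satisfy $\int_\G |u'|^2\,\mathrm dx\le\rho$, i.e.\ $u\in\overline{\B_\rho^\mu}=\B_\rho^\mu\cup\U_\rho^\mu$. Suppose for contradiction that $u\in\U_\rho^\mu$. Then by continuity of $E(\cdot,\G)$ on $H^1(\G)$, which follows from the standard Sobolev embedding $H^1(\G)\hookrightarrow L^p(\G)$ for a compact graph, we have $E(u_n,\G)\to E(u,\G)\ge \inf_{\U_\rho^\mu} E(\cdot,\G)$, contradicting the strict inequality $\limsup_{n\to\infty} E(u_n,\G)<\inf_{\U_\rho^\mu}E(\cdot,\G)$ in the hypothesis. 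Therefore $u\in\B_\rho^\mu$, concluding the proof.

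There is no real obstacle here: the only subtlety is making sure that the strong $H^1$ convergence transfers both to the gradient norm (to place $u$ in $\overline{\B_\rho^\mu}$) and to the energy values (to rule out $u\in\U_\rho^\mu$), but both follow from strong convergence together with the continuity of $E(\cdot,\G)$ on $H^1(\G)$.
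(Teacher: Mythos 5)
Your proof is correct and follows exactly the route the paper intends (the paper states this as an immediate consequence of Lemma \ref{lemcompact} without writing out the details): boundedness in $H^1(\G)$ from the definition of $\B_\rho^\mu$ together with the mass constraint, then strong convergence from the lemma, and the ``furthermore'' part from continuity of $E(\cdot,\G)$ plus the fact that the limit satisfies $\int_\G|u'|^2\,dx\le\rho$.
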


\begin{lemma} \label{lempregn}
	For any $u \in H^1(\G)$, there is a constant $K > 0$ depending on $\G$ and on $p$ such that
	\begin{align} \label{eqg-n1}
		\int_\G|u-\bar{u}|^pdx \leq K\left( \int_\G|u'|^2dx\right)^{\frac{p-2}4}\left( \int_\G|u-\bar{u}|^2dx\right)^{\frac{p+2}4},
	\end{align}
where 
$$
\bar{u} = |\G|^{-1}\int_\G udx.
$$
\end{lemma}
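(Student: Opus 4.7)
The strategy is to reduce the claim to a pointwise $L^\infty$ estimate for the mean-zero part $v := u - \bar u$ and then to interpolate. Setting $v := u - \bar u$, one has $v' = u'$ a.e.\ and $\int_\G v\,dx = 0$, so that the target inequality is equivalent to
$$\int_\G |v|^p\,dx \leq K\,\|v'\|_{L^2(\G)}^{(p-2)/2}\,\|v\|_{L^2(\G)}^{(p+2)/2}.$$

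The key estimate I would establish is $\|v\|_{L^\infty(\G)}^2 \leq 2\,\|v\|_{L^2(\G)}\,\|v'\|_{L^2(\G)}$. Since $v \in H^1(\G)$ is continuous with zero mean on the compact connected graph $\G$, either $v \equiv 0$ (in which case the inequality is trivial) or $v$ takes both signs; in the latter case, by the intermediate value theorem applied along a path joining a point where $v > 0$ to one where $v < 0$, there exists $x_0 \in \G$ with $v(x_0) = 0$. For an arbitrary $x \in \G$, I would choose a continuous path $\gamma \subset \G$ from $x_0$ to $x$ and apply the fundamental theorem of calculus to $v^2$ edge by edge along $\gamma$; the continuity of $H^1(\G)$-functions at the vertices makes the edgewise contributions telescope to
$$v(x)^2 = v(x_0)^2 + 2\int_\gamma v v'\,ds \leq 2\int_\G |v||v'|\,dx \leq 2\,\|v\|_{L^2(\G)}\,\|v'\|_{L^2(\G)},$$
by Cauchy-Schwarz. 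Taking the supremum over $x$ yields the claimed bound.

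To conclude, I would split $|v|^p = |v|^{p-2}\cdot v^2$ and use the $L^\infty$ bound on $|v|^{p-2}$:
$$\int_\G |v|^p\,dx \leq \|v\|_{L^\infty(\G)}^{p-2}\,\|v\|_{L^2(\G)}^2 \leq 2^{(p-2)/2}\,\|v\|_{L^2(\G)}^{(p+2)/2}\,\|v'\|_{L^2(\G)}^{(p-2)/2},$$
which is the desired inequality with $K = 2^{(p-2)/2}$ (notably independent of $\G$, which is slightly stronger than the statement requires). The only mildly non-routine point is the justification of the path integral on a graph: a path may cross several edges and pass through vertices, but because $H^1(\G)$-functions are continuous at the vertices, the edgewise applications of the fundamental theorem of calculus telescope cleanly into the displayed identity, so no genuine obstacle arises.
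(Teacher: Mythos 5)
Your argument is correct, but it takes a genuinely different route from the paper. The paper simply invokes the classical Gagliardo--Nirenberg inequality on $\G$ in its inhomogeneous form, $\int_\G|v|^p\,dx \leq C\bigl(\|v'\|_{L^2(\G)}^2+\|v\|_{L^2(\G)}^2\bigr)^{(p-2)/4}\|v\|_{L^2(\G)}^{(p+2)/4}$ with $v=u-\bar u$, and then uses the Poincar\'e inequality $\la_2(\G)\|v\|_{L^2(\G)}^2\leq\|v'\|_{L^2(\G)}^2$ (valid because $\int_\G v\,dx=0$) to absorb the $L^2$ term into the gradient term; the resulting constant is $C\bigl(1+\la_2(\G)^{-1}\bigr)^{(p-2)/4}$ and depends on $\G$. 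You instead prove the homogeneous inequality from scratch: the mean-zero condition forces $v$ to vanish somewhere on the (connected) graph, the fundamental theorem of calculus along a simple path from that zero gives $\|v\|_{L^\infty(\G)}^2\leq 2\|v\|_{L^2(\G)}\|v'\|_{L^2(\G)}$, and the splitting $|v|^p=|v|^{p-2}v^2$ finishes the job. What your approach buys is self-containedness and a universal constant $K=2^{(p-2)/2}$ independent of $\G$, which is slightly stronger than the lemma asserts (and would, incidentally, make the thresholds $\mu^*$, $\hat\mu$, etc.\ in the paper graph-independent through $K$). Two small points worth making explicit: the path from $x_0$ to $x$ should be taken injective so that $\int_\gamma|v||v'|\,ds\leq\int_\G|v||v'|\,dx$, and connectedness of $\G$ is essential (for a disconnected graph $v$ need not vanish and the homogeneous inequality fails), though this is a standing assumption in the paper, already implicit in $\la_2(\G)>0$.
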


\begin{proof}
	 By the classical Gagliardo-Nirenberg inequality, we have
	\begin{align*}
		\int_\G|u-\bar u|^pdx \leq C\left( \int_\G(|(u-\bar u)'|^2 + |u-\bar u|^2)dx\right)^{\frac{p-2}4}\left( \int_\G|u-\bar{u}|^2dx\right)^{\frac{p+2}4},
	\end{align*}
    where $C > 0$ is a positive constant depending on $\G$ and on $p$. Since $\int_\G(u-\bar u)dx = 0$, we have
    \begin{align*}
    	\la_2(\G)\int_\G|u-\bar u|^2dx \leq \int_\G|(u-\bar u)'|^2dx = \int_\G|u'|^2dx.
    \end{align*}
    Hence,
    \begin{align*}
    	\int_\G|u-\bar u|^pdx \leq C\left( 1 + \frac1{\la_2(\G)} \right)^{\frac{p-2}4} \left( \int_\G|u'|^2dx\right)^{\frac{p-2}4}\left( \int_\G|u-\bar{u}|^2dx\right)^{\frac{p+2}4}.
    \end{align*}
\end{proof}

\begin{lemma} \label{lemgn}
	For any $u \in H_\mu^1(\G)$, there is a constant $K > 0$ depending on $\G$ and on $p$ such that
	\begin{align}
		\int_\G|u|^pdx & \leq pK\left( \int_\G|u'|^2dx\right)^{\frac{p-2}4}\left( \int_\G|u-\bar{u}|^2dx\right)^{\frac{p+2}4} + p\ell^{\frac{2-p}2}\mu^\frac p2 \label{eqgn1} \\
		& \leq pK\mu^{\frac{p+2}4}\left( \int_\G|u'|^2dx\right)^{\frac{p-2}4} + p\ell^{\frac{2-p}2}\mu^\frac p2, \label{ajout1}
	\end{align}
where $\ell$ denotes the total length of $\G$ and
$$
\bar{u} = |\G|^{-1}\int_\G udx.
$$
\end{lemma}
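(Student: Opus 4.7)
The plan is to decompose $u = (u-\bar u) + \bar u$ into its zero-mean fluctuation and its constant part, apply Lemma \ref{lempregn} to the former, and control the latter directly by the mass. Pointwise, $|u|^p \leq (|u-\bar u| + |\bar u|)^p \leq C_p(|u-\bar u|^p + |\bar u|^p)$ for some constant $C_p$ depending only on $p$ (for instance $C_p = 2^{p-1}$ via convexity of $t \mapsto t^p$). Integrating over $\G$ splits the $L^p$-norm into a fluctuation contribution and a mean contribution.

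For the fluctuation piece, I would apply Lemma \ref{lempregn} directly to get
\begin{equation*}
\int_\G |u-\bar u|^p\,dx \leq K_0 \left(\int_\G |u'|^2\,dx\right)^{\frac{p-2}{4}} \left(\int_\G |u-\bar u|^2\,dx\right)^{\frac{p+2}{4}},
\end{equation*}
with $K_0$ depending on $\G$ and $p$. For the mean piece, since $\bar u$ is constant on $\G$, I would compute $\int_\G |\bar u|^p\,dx = \ell |\bar u|^p$ and estimate $|\bar u|$ using Cauchy--Schwarz: $\ell \bar u^2 = \ell^{-1}\bigl(\int_\G u\,dx\bigr)^2 \leq \int_\G u^2\,dx = \mu$, which yields $|\bar u|^p \leq (\mu/\ell)^{p/2}$ and therefore $\int_\G|\bar u|^p\,dx \leq \ell^{(2-p)/2}\mu^{p/2}$. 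Combining the two contributions and redefining $K$ to absorb the factor $C_p/p$ (which depends only on $p$ and $\G$) into the constant in front of the fluctuation term and, symmetrically, making the constant in front of the mean term equal to $p$, gives the inequality \eqref{eqgn1}.

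To pass from \eqref{eqgn1} to \eqref{ajout1}, I would simply use the identity
\begin{equation*}
\int_\G |u-\bar u|^2\,dx = \int_\G u^2\,dx - 2\bar u \int_\G u\,dx + \ell \bar u^2 = \mu - \ell \bar u^2 \leq \mu,
\end{equation*}
and substitute this bound in \eqref{eqgn1}. There is no real obstacle here: the argument is a routine decomposition plus Cauchy--Schwarz. The only mild subtlety is cosmetic, namely how to package the $p$-dependent constants so that both summands in the final estimate carry the same prefactor $p$; this is harmless because $K$ is allowed to depend on $p$ and $\G$.
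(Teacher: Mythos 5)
Your argument is the same as the paper's: decompose $u=(u-\bar u)+\bar u$, apply Lemma \ref{lempregn} to the fluctuation, bound $|\bar u|$ by $(\mu/\ell)^{1/2}$ (the paper does this via the orthogonality $\int_\G(u-\bar u)\bar u\,dx=0$, which also gives $\int_\G|u-\bar u|^2dx\le\mu$ for the passage to \eqref{ajout1}; your Cauchy--Schwarz computation is equivalent). In substance the proof is correct.

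One caveat, though: the step you call cosmetic is not quite. With $C_p=2^{p-1}$ you can absorb $C_p$ into $K$ on the fluctuation term, but the mean term in \eqref{eqgn1} carries the \emph{explicit} prefactor $p$, which is not absorbable into anything, and $2^{p-1}>p$ for $p>6$, so you cannot ``make the constant equal to $p$'' by relabeling. (This constant matters downstream: it is divided by $p$ in $E$ and produces exactly the $\ell^{\frac{2-p}2}\mu^{\frac p2}$ correction defining $M_2$ in Proposition \ref{propinf}.) The fix is to use the weighted elementary inequality
\begin{equation*}
(\alpha+\beta)^p\le (1+\varepsilon^{-1})^{p-1}\alpha^p+(1+\varepsilon)^{p-1}\beta^p,\qquad \alpha,\beta\ge 0,
\end{equation*}
choosing $\varepsilon>0$ small enough that $(1+\varepsilon)^{p-1}\le p$ on the mean term, and dumping the large factor $(1+\varepsilon^{-1})^{p-1}$ into $K$ on the fluctuation term. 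With that replacement your proof goes through and yields \eqref{eqgn1}--\eqref{ajout1} exactly as stated.
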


\begin{proof}
	By Lemma \ref{lempregn}, we know
	\begin{align*}
		\int_\G|u-\bar{u}|^pdx \leq K\left( \int_\G|u'|^2dx\right)^{\frac{p-2}4}\left( \int_\G|u-\bar{u}|^2dx\right)^{\frac{p+2}4}
	\end{align*}
where $K$ is a positive constant depending on $\G$ and on $p$. Then, we obtain
\begin{align} \label{equp}
	\int_\G|u|^pdx & = \int_\G|u-\bar{u} + \bar{u}|^pdx \nonumber \\
	& \leq p\int_\G|u-\bar{u}|^pdx + p|\bar{u}|^p|\G| \nonumber \\
	& \leq pK\left( \int_\G|u'|^2dx\right)^{\frac{p-2}4}\left( \int_\G|u-\bar{u}|^2dx\right)^{\frac{p+2}4} + p|\bar{u}|^p|\G|. 
\end{align}
Note that
\begin{align*}
	\int_\G(u-\bar{u})\bar{u}dx = 0.
\end{align*}
Then we have
\begin{align} \label{equ2}
	\int_\G|\bar{u}|^2dx + \int_\G|u-\bar{u}|^2dx = \int_\G|u|^2dx = \mu,
\end{align}
implying that $|\bar{u}| \leq (\mu/\ell)^{\frac12}$. This together with \eqref{equp} shows that
\begin{align*}
	\int_\G|u|^pdx \leq pK\left( \int_\G|u'|^2dx\right)^{\frac{p-2}4}\left( \int_\G|u-\bar{u}|^2dx\right)^{\frac{p+2}4} + p\ell^{\frac{2-p}2}\mu^\frac p2.
\end{align*}
Namely that \eqref{eqgn1} holds. Now, using \eqref{equ2} again, we have $\int_\G|u-\bar{u}|^2dx \leq \mu$ and thus \eqref{ajout1} follows from \eqref{eqgn1}. This completes the proof.

\end{proof}

\begin{remark}
	Since $\int_\G(u-\bar{u})dx = 0$, 
	$$
	\lambda_2(\G)\int_\G|u-\bar{u}|^2dx \leq \int_\G|u'|^2dx.
	$$
	Then using \eqref{eqgn1} we see that
	\begin{align*}
		\int_\G|u|^pdx \leq p\bar{K}\left( \int_\G|u'|^2dx\right)^{\frac p2} + p\ell^{\frac{2-p}2}\mu^\frac p2, \quad \forall u \in H_\mu^1(\G),
	\end{align*}
where $\bar{K} = \lambda_2(\G)^{-\frac{p+2}4}K$ depends on $\G$ and on $p$.
\end{remark}

\begin{proposition} \label{propinf}
	Given $\mu >0$ and recalling that $p>6$, we define
	\begin{align*}
		& g(\rho) = \frac12\rho - K\mu^{\frac{p+2}4} \rho^{\frac{p-2}4}, \quad \rho > 0,  \\
		& \rho^* = \rho^*(\mu,p,\G) := \min\{b,b\mu^{-\frac{p+2}{p-6}}\} \text{ where } b = \left( \frac2{(p-2)K}\right)^{\frac4{p-6}}, \\ 
		& M_1 = M_1(\mu,p,\G) := g(\rho^*), \\
		& M_2 = M_2(\mu,p,\G) := M_1 - \ell^{\frac{2-p}2}\mu^{\frac p2},
	\end{align*}
where $K$ is the positive constant given by Lemma \ref{lemgn}. Note that $g'(\rho) > 0$ in $(0,b\mu^{-\frac{p+2}{p-6}})$ and $g'(\rho) < 0$ when $\rho > b\mu^{-\frac{p+2}{p-6}}$. We have 
\begin{align} \label{eqM1}
	M_1 \geq \frac{p-6}{2(p-2)}\rho^*,
\end{align}
and the ``=" holds if and only if $\mu \geq 1$. Furthermore, for any $\mu > 0$, we have
\begin{align}
	\inf_{\U^\mu_{\rho^*}} E(\cdot,\G) \geq M_2. \label{lowerbound}
\end{align}
\end{proposition}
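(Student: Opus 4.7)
The plan is to combine elementary single-variable calculus on $g$ with a direct application of Lemma \ref{lemgn}. All three assertions are essentially computational, so I do not expect any real obstacle beyond a careful case analysis in the proof of \eqref{eqM1}.

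For the monotonicity statement I would differentiate once and observe that $g'(\rho)=0$ has a unique positive solution, since $p>6$ makes the term $\rho^{(p-6)/4}$ strictly increasing. Solving $g'(\rho)=0$ explicitly yields
\[
\rho_c := b\mu^{-(p+2)/(p-6)},
\]
which is precisely the reason for the chosen form of $b$; hence $g$ increases on $(0,\rho_c)$ and decreases on $(\rho_c,\infty)$. At the maximum point, the stationarity identity $Kb^{(p-6)/4}=2/(p-2)$ (equivalently, the defining equation of $b$) reduces $g(\rho_c)$ in one line to $\frac{p-6}{2(p-2)}\rho_c$.

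For the bound \eqref{eqM1} I would then split into two cases according to the definition of $\rho^*$. If $\mu\geq 1$ then $\rho_c\leq b$ and $\rho^*=\rho_c$, so \eqref{eqM1} holds with equality from the formula just derived. If $\mu<1$ then $\rho^*=b<\rho_c$, and the required inequality $g(b)\geq \frac{p-6}{2(p-2)}b$ reduces, again using the identity $Kb^{(p-6)/4}=2/(p-2)$, to the \emph{strict} inequality $\mu^{(p+2)/4}<1$. This gives both the asserted inequality and the characterization of the equality case.

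Finally, the lower bound \eqref{lowerbound} is immediate from Lemma \ref{lemgn}: for any $u\in\U_{\rho^*}^\mu$ one has $\int_\G|u'|^2\,dx=\rho^*$, so applying \eqref{ajout1} to the $L^p$ term in $E(u,\G)$ yields
\[
E(u,\G) \;\geq\; \tfrac12\rho^* - K\mu^{(p+2)/4}(\rho^*)^{(p-2)/4} - \ell^{(2-p)/2}\mu^{p/2} \;=\; g(\rho^*) - \ell^{(2-p)/2}\mu^{p/2} \;=\; M_2.
\]
The only subtlety is coherence at the boundary value $\mu=1$, where $b=\rho_c$ so both branches of the case analysis coincide and return the same value for $M_1$, keeping the equality statement consistent.
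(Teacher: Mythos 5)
Your proposal is correct and follows essentially the same route as the paper: explicit differentiation of $g$ to locate the unique critical point $b\mu^{-(p+2)/(p-6)}$, the identity $Kb^{(p-6)/4}=2/(p-2)$ to evaluate $g$ there, a two-case split on $\mu\geq 1$ versus $\mu<1$ for \eqref{eqM1} and its equality characterization, and a direct application of \eqref{ajout1} from Lemma \ref{lemgn} for \eqref{lowerbound}. No gaps.
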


\begin{proof}
	A direct computation yields
	\begin{align*}
		g'(\rho) = \frac12 - \frac{p-2}4K\mu^{\frac{p+2}4} \rho^{\frac{p-6}4}.
	\end{align*}
    Let $\bar{\rho} := b\mu^{-\frac{p+2}{p-6}}$. Obviously, $g'(\rho) > 0$ in $(0,\bar{\rho})$ and $g'(\rho) < 0$ when $\rho > \bar{\rho}$.
    
	If $\mu \geq 1$, then $\bar{\rho} = b\mu^{-\frac{p+2}{p-6}} \leq b$ and thus $\rho^* = \bar{\rho}$. Hence,
	\begin{align} \label{eqM1-1}
		 M_1 = g(\bar{\rho}) = \frac{p-6}{2(p-2)}b\mu^{-\frac{p+2}{p-6}} =  \frac{p-6}{2(p-2)}\rho^*.
	\end{align}
    If $0 < \mu < 1$, then $\bar{\rho} > b$ and thus $\rho^* = b < \bar{\rho}$. Hence,
    \begin{align}\label{eqM1-2}
    	M_1 = g(b) = \frac12b - K\mu^{\frac{p+2}4}b^{\frac{p-2}4} > \frac12b - Kb^{\frac{p-2}4} = \frac{p-6}{2(p-2)}b = \frac{p-6}{2(p-2)}\rho^*.
    \end{align}
    By \eqref{eqM1-1} and \eqref{eqM1-2}, we get \eqref{eqM1} and show that the "=" in \eqref{eqM1} holds if and only if $\mu \geq 1$.
    
	Furthermore, by Lemma \ref{lemgn}, we have
	\begin{align*}
		\inf_{\U^\mu_{\rho^*}} E(\cdot,\G) \geq \frac12\rho^* - K\mu^{\frac{p+2}4}\left( \rho^*\right)^{\frac{p-2}4} - \ell^{\frac{2-p}2}\mu^\frac p2 = g(\rho^*) - \ell^{\frac{2-p}2}\mu^\frac p2 = M_2.
	\end{align*}
	Namely \eqref{lowerbound} holds.
\end{proof}

\begin{remark}
	In the definition of $\rho^*$, we made sure that $\rho^* \leq b$ in order to have a uniform bound with respect to $\mu$ as $\mu \to 0$. The boundedness of $\rho^*$ will be used in the proof of Lemma \ref{lemG(u)}.
\end{remark}

\section{Link and the min-max structure} \label{seclink}

Let $0=\la_1(\G) < \la_2(\G) \leq \la_3(\G) \leq \cdots $ be the eigenvalues of $-\frac{d^2}{dx^2}$ on the compact metric graph $\G$ with Kirchhoff condition at the vertices, and let $\phi_1, \phi_2, \phi_3, \cdots$ be the corresponding eigenfunctions with $\int_\G|\phi_i|^2dx = 1$ for all $i \in \N$. Then we set
\begin{align*}
	\s_1 = span\{\phi_1\} \cap H^1_\mu(\G) = \{\pm \sqrt{\mu/l}\}, \quad \s_1^\perp = (span\{\phi_1\})^\perp \cap H^1_\mu(\G).
\end{align*}
Note that since $\phi_1 = \frac{1}{\sqrt{l}},$ any $u \in \s_1^\perp$ is sign-changing.
	
	A compact subset $A$ of $H^1_\mu(\G)$ with $\s_1 \subset A$ and $A \cap \mathbb{S}_{1}^\perp \not= \emptyset$ is said to be \textit{linked} to $\mathbb{S}_{1}^\perp$ if, for any continuous mapping $h \in C(A,H^1_\mu(\G))$ satisfying $h|_{\s_1} = \textbf{id}$, there holds $h(A) \cap \mathbb{S}_{1}^\perp \not= \emptyset$. Recall that $\rho^*$ and $M_2$ were defined in Proposition \ref{propinf}. We introduce the following set which will be used to define a min-max structure:
\begin{align} \label{eqdefl1}
	\cl_1 = \cl_1(\mu) := \bigl\{A \subset \B_{\rho^*}^\mu: \sup_A E(\cdot,\G) < M_2, \ \s_1 \subset A, \ A \text{ is compact and is linked to } \s_1^\perp\bigr\}.
\end{align}

\begin{lemma} \label{lemlnotemp}
    Let $\hat{\mu} = \hat{\mu}(\G,p) > 0$ be the positive solution of
    $$
    \la_2(\G)\mu + 2\ell^{\frac{2-p}2}\mu^{\frac p2} = \frac{p-6}{p-2}\left( \frac2{(p-2)K}\right)^{\frac4{p-6}},
    $$
    and let $\bar{\mu} = \bar{\mu}(\G,p) > 0$ be the positive solution of
    $$
    \la_2(\G)\mu^{\frac{2(p-2)}{p-6}} + 2\ell^{\frac{2-p}2}\mu^{\frac{(p-2)^2}{2(p-6)}} = \frac{p-6}{p-2}\left( \frac2{(p-2)K}\right)^{\frac4{p-6}}.
    $$
	Then $\cl_1$ is not empty for $0 < \mu < \min\{\hat{\mu},\bar{\mu}\}$.
\end{lemma}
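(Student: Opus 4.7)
The plan is to exhibit a single explicit element of $\cl_1$, namely a short arc on the mass sphere joining the two constants $\pm\sqrt{\mu/\ell}\in\s_1$ through the second eigenfunction $\phi_2$. Concretely I would take
\[
\ga(\theta) := \sqrt{\mu}\bigl(\cos\theta\, \phi_1 + \sin\theta\, \phi_2\bigr), \qquad \theta \in [0,\pi],
\]
and set $A := \ga([0,\pi])$. The $L^2$-orthonormality of $\phi_1,\phi_2$ yields $\|\ga(\theta)\|_{L^2(\G)}^2 = \mu$, so $A\subset H^1_\mu(\G)$, and since $\phi_1'\equiv 0$ the kinetic integral is $\int_\G|\ga(\theta)'|^2\intd x = \mu\la_2(\G)\sin^2\theta$. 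The endpoints $\ga(0)=\sqrt{\mu/\ell}$ and $\ga(\pi)=-\sqrt{\mu/\ell}$ give $\s_1\subset A$, and $\ga(\pi/2)=\sqrt{\mu}\phi_2\in\s_1^\perp$ because $\phi_2$ has zero mean; compactness of $A$ and continuity of $\ga:[0,\pi]\to H^1(\G)$ are immediate.

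For the linking property: given any $h \in C(A,H^1_\mu(\G))$ with $h|_{\s_1} = \mathbf{id}$, I would introduce the continuous real-valued function
\[
f(\theta) := \int_\G h(\ga(\theta))\, \intd x, \qquad \theta\in[0,\pi],
\]
which satisfies $f(0)=\sqrt{\mu\ell}>0$ and $f(\pi)=-\sqrt{\mu\ell}<0$, and conclude by the intermediate value theorem that $h(\ga(\theta_0))\in\s_1^\perp$ for some $\theta_0\in(0,\pi)$.

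The main quantitative step is to verify $\sup_A E(\cdot,\G)<M_2$; this will in particular imply $A\subset\B_{\rho^*}^\mu$ via the strictly weaker inequality $\mu\la_2(\G)<\rho^*$. Dropping the non-positive $L^p$ term one has
\[
E(\ga(\theta),\G) \leq \tfrac12\mu\la_2(\G)\sin^2\theta \leq \tfrac12\mu\la_2(\G),
\]
so in view of the lower bound $M_1\geq\tfrac{p-6}{2(p-2)}\rho^*$ from Proposition \ref{propinf} and the definition $M_2=M_1-\ell^{(2-p)/2}\mu^{p/2}$, it suffices to establish the strict inequality
\[
\mu\la_2(\G) + 2\ell^{(2-p)/2}\mu^{p/2} < \tfrac{p-6}{p-2}\,\rho^*.
\]
When $\mu<1$, Proposition \ref{propinf} gives $\rho^*=b$, and this is exactly the condition $\mu<\hat\mu$; when $\mu\geq 1$, one has $\rho^*=b\mu^{-(p+2)/(p-6)}$, and multiplying through by $\mu^{(p+2)/(p-6)}$, together with the elementary identities $1+\tfrac{p+2}{p-6}=\tfrac{2(p-2)}{p-6}$ and $\tfrac{p}{2}+\tfrac{p+2}{p-6}=\tfrac{(p-2)^2}{2(p-6)}$, rearranges it into $\mu<\bar\mu$. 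Thus for $0<\mu<\min\{\hat\mu,\bar\mu\}$ one branch or the other applies, and $A\in\cl_1$.

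The main obstacle is not topological: the IVT handles the linking for free, and the inclusion $A\subset\B_{\rho^*}^\mu$ is automatic once the energy inequality is proven. Rather, the delicate point is the bookkeeping of the two regimes $\mu<1$ and $\mu\geq 1$ produced by Proposition \ref{propinf}, which generate the two distinct defining equations for $\hat\mu$ and $\bar\mu$; one has to combine the crude kinetic upper bound on $E(\ga(\theta),\G)$ with the sharp lower bound on $M_1$ sharply enough to still absorb the correction term $\ell^{(2-p)/2}\mu^{p/2}$ that appears in $M_2$.
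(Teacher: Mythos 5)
Your proof is correct and follows essentially the same route as the paper: your arc $A=\ga([0,\pi])$ is exactly the paper's half-circle $Q_2$, the kinetic upper bound $\tfrac12\mu\la_2(\G)$ combined with $M_1\ge\tfrac{p-6}{2(p-2)}\rho^*$ and the two-regime case analysis ($\mu\le 1$ versus $\mu>1$) yielding the conditions $\mu<\hat\mu$ and $\mu<\bar\mu$ are identical, and the linking via the intermediate value theorem is precisely the argument the paper acknowledges suffices in this one-dimensional situation. The only difference is that the paper additionally records a topological-degree/homotopy version of the linking step, solely in anticipation of the higher-dimensional sets $Q_k$ used in the proof of Theorem \ref{thmsc2}.
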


\begin{proof}
	Define
	$$
	Q_2:= \left\{t_1\sqrt{\mu}\phi_1+t_2\sqrt{\mu}\phi_2: t^2_1 + t^2_2 = 1, t_2 \geq 0\right\} \subset H^1_\mu(\G).
	$$
	It is clear that $\s_1 = \{\pm \sqrt{\mu/l}\} \subset Q_2$. By the definitions of $\hat{\mu}$ and $\bar{\mu}$, we know
	$$
	\hat{\mu} > 1 \Leftrightarrow \bar{\mu} > 1,  \quad 0 < \hat{\mu} < 1 \Leftrightarrow 0< \bar{\mu} < 1.
	$$
	If $0 < \mu < \min\{\hat{\mu}, \bar{\mu}\}$ and $\mu \leq 1$,  we have
	\begin{align} \label{eqm1}
		\la_2(\G)\mu + 2\ell^{\frac{2-p}2}\mu^{\frac p2} < \frac{p-6}{p-2}\left( \frac2{(p-2)K}\right)^{\frac4{p-6}}.
	\end{align}
    Since $\mu \leq 1$, $\rho^* = b$ where $b = \left( \frac2{(p-2)K}\right)^{\frac4{p-6}}$. By Proposition \ref{propinf} and \eqref{eqm1}, direct computations yield
    \begin{align} \label{eqQ2-1}
    	\sup_{Q_2} E(\cdot,\G) < \frac12\la_2(\G)\mu < \frac{p-6}{2(p-2)}b - \ell^{\frac{2-p}2}\mu^{\frac p2} \leq M_1 - \ell^{\frac{2-p}2}\mu^{\frac p2} = M_2.
    \end{align}
    If $1 < \mu < \min\{\hat{\mu}, \bar{\mu}\}$, we have
    \begin{align} \label{eqm1-1}
    	\la_2(\G)\mu^{\frac{2(p-2)}{p-6}} + 2\ell^{\frac{2-p}2}\mu^{\frac{(p-2)^2}{2(p-6)}} < \frac{p-6}{p-2}\left( \frac2{(p-2)K}\right)^{\frac4{p-6}}.
    \end{align}
    Since $\mu > 1$, $\rho^* = b\mu^{-\frac{p+2}{p-6}}$. By Proposition \ref{propinf} and \eqref{eqm1-1}, direct computations yield
    \begin{align} \label{eqQ2-2}
    	\sup_{Q_2}E(\cdot,\G) < \frac12\la_2(\G)\mu < \frac{p-6}{2(p-2)}b\mu^{-\frac{p+2}{p-6}} - \ell^{\frac{2-p}2}\mu^{\frac p2} = M_1 - \ell^{\frac{2-p}2}\mu^{\frac p2} = M_2.
    \end{align}
    By \eqref{eqQ2-1} and \eqref{eqQ2-2} we deduce that $\sup_{Q_2} E(\cdot,\G) < M_2$ when $0 < \mu < \min\{\hat{\mu},\bar{\mu}\}$.
    
    Next, we show $Q_2 \subset \B_{\rho^*}^\mu$ if $0 < \mu < \min\{\hat{\mu},\bar{\mu}\}$. When $0 < \mu < \min\{\hat{\mu}, \bar{\mu}\}$ and $\mu \leq 1$, \eqref{eqm1} implies
    \begin{align} \label{eqm2}
    	\la_2(\G)\mu < \frac{p-6}{p-2}b < b = \rho^*.
    \end{align} 
    When $1 < \mu < \min\{\hat{\mu}, \bar{\mu}\}$, \eqref{eqm1-1} implies
    \begin{align} \label{eqm2-1}
    	\la_2(\G)\mu < \frac{p-6}{p-2}b\mu^{-\frac{p+2}{p-6}} < b\mu^{-\frac{p+2}{p-6}} = \rho^*.
    \end{align}
    By \eqref{eqm2} and \eqref{eqm2-1} we obtain $\int_\G|u'|^2 dx \leq \la_2(\G)\mu < \rho^*$ for any $u \in Q_2$, implying that $Q_2 \subset \B_{\rho^*}^\mu$ for $0 < \mu < \min\{\hat{\mu},\bar{\mu}\}$.

    It remains to prove that $Q_2$ is linked to $\s_1^\perp$. 
		
		For any continuous mapping $h \in C(Q_2,H^1_\mu(\G))$, we have to show that $h(Q_2) \cap \mathbb{S}_1^\perp \not= \emptyset$. Since $Q_2$ is one-dimensional and  necessarily $\int_{\G} h(- \sqrt{\mu} \phi_1) \phi_1 <0$ and $\int_{\G} h( \sqrt{\mu} \phi_1) \phi_1 >0$, the intermediate value theorem gives directly the result. However, having in view the proof of Theorem \ref{thmsc2}, we present here a homotopy argument (see, e.g., Example 2 in \cite[Chapter II, Section 1]{Cha}). Let $P : H^1(\G) \to span\{\phi_1\}$ be the orthogonal projection. Note that $Q_2 \subset span\{\phi_1,\phi_2\}$. Hence, the topological degrees $\deg(P \circ h,Q_2,0)$ and $\deg(P|_{Q_2},Q_2,0)$ are well-defined. Let
    \begin{align*}
	    g(t,u) := (1-t)P(h(u)) + t P|_{Q_2} u, \quad \forall t \in [0,1], \quad u \in Q_2.
    \end{align*}
    For any $u \in \partial Q_2 = \s_1$ and $t \in [0,1]$, we have $g(t,u) = P u = u \neq 0$. Hence, we have
    \begin{align*}
	    \deg(P \circ h,Q_2,0) = \deg(P|_{Q_2},Q_2,0) = 1,
    \end{align*}
    and so, there exists a $u_0 \in Q_2$ such that $P(h(u_0)) = 0$, i.e., $h(u_0) \in \mathbb{S}_1^\perp$. This yields that $h(Q_2) \cap \mathbb{S}_1^\perp \not= \emptyset$ and completes the proof.
\end{proof}

To find sign-changing critical points, we introduce the following sets
\begin{align*}
   	& \pm \p = \bigl\{u \in H^1(\G): \pm u \geq 0\bigl\},\\
	&  (\pm \p)_\nu = \bigl\{u \in H^1(\G): \inf_{w \in \pm \p}\|u -w\|_{H^1(\G)} \leq \nu\bigr\}, \quad \text{for some } \nu > 0,  \\
	& \ D^*(\nu) = \p_\nu \cup (-\p)_\nu, \quad S^*(\nu) = H^1(\G) \backslash D^*(\nu).
\end{align*}

Using $\cl_1$ and $S^*(\nu)$ we can define a min-max value by
\begin{align*}
	c_1 := \inf_{A \in \cl_1}\sup_{u \in A\cap S^*(\nu)}E(u,\G).
\end{align*}
Let 
\begin{equation}\label{ajout2}
\B^{M_2} := \bigl\{u \in \B_{\rho^*}^\mu: E(u,\G) < M_2\bigl\},   
\end{equation}
where $M_2$ is defined in Proposition \ref{propinf}
and
\begin{align*}
	\bar{c}_1 := \inf_{u\in\s_1^\perp \cap \B^{M_2}}E(u,\G). 
\end{align*}

	We need the following Lemma \ref{lemfarawayfrom} to show that $c_1$ is well defined for small $\nu > 0$.
	
	\begin{lemma} \label{lemfarawayfrom}
		Let $\delta := \min\{dist(\mathbb{S}_{1}^\perp \cap \B_{\rho^*}^\mu, \p), dist(\mathbb{S}_{1}^\perp \cap \B_{\rho^*}^\mu, -\p)\} $. Then $\delta > 0$.
	\end{lemma}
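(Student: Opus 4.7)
The plan is to argue by contradiction: suppose $\delta = 0$. Without loss of generality (the case of $-\p$ is symmetric) there exist sequences $u_n \in \s_1^\perp \cap \B_{\rho^*}^\mu$ and $w_n \in \p$ with $\|u_n - w_n\|_{H^1(\G)} \to 0$. The goal is to extract a limit $u$ which simultaneously satisfies $u \geq 0$ a.e., has zero integral over $\G$, and has $L^2$-norm equal to $\sqrt{\mu}>0$, which is impossible.

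Since $u_n \in \B_{\rho^*}^\mu$, one has $\int_\G |u_n'|^2 dx < \rho^*$ and $\int_\G |u_n|^2 dx = \mu$, so $\{u_n\}$ is bounded in $H^1(\G)$. The compactness of $\G$ makes the embedding $H^1(\G) \hookrightarrow L^2(\G)$ compact, so along a subsequence $u_n \to u$ strongly in $L^2(\G)$ for some $u\in L^2(\G)$. By the triangle inequality and $\|u_n - w_n\|_{H^1(\G)} \to 0$, we also have $w_n \to u$ in $L^2(\G)$; passing to a further subsequence yields $w_n \to u$ almost everywhere, and since each $w_n \geq 0$, the limit satisfies $u \geq 0$ a.e.

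To reach the contradiction, I use the decisive fact that $\phi_1 = 1/\sqrt{\ell}$ is constant: the condition $u_n \in \s_1^\perp$ therefore means $\int_\G u_n \,dx = 0$. Taking the $L^2$ limit gives $\int_\G u\, dx = 0$, which combined with $u \geq 0$ a.e.\ forces $u \equiv 0$. On the other hand, the $L^2$ convergence also gives $\int_\G u^2 dx = \lim_n \int_\G u_n^2 dx = \mu > 0$, contradicting $u\equiv 0$. There is no real obstacle here: the argument is a straightforward compactness plus sign-preservation argument, whose only structural input is the compact embedding $H^1(\G)\hookrightarrow L^2(\G)$ on compact graphs and the fact that $\phi_1$ is constant (so orthogonality to $\s_1$ is equivalent to mean zero, incompatible with one-signedness once $u\not\equiv 0$).
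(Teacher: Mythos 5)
Your proof is correct and follows essentially the same route as the paper: argue by contradiction, use the boundedness of $\B_{\rho^*}^\mu$ in $H^1(\G)$ and the compact embedding into $L^2(\G)$ to extract a common strong $L^2$ limit of $u_n$ and $w_n$, and then exploit that $\phi_1$ is constant so the limit must be mean-zero with $L^2$-norm $\sqrt{\mu}$, which is incompatible with being one-signed. The paper phrases the final contradiction as ``the limit is sign-changing yet belongs to $\p$'' while you phrase it as ``nonnegative with zero integral forces the limit to vanish, contradicting the mass''; these are the same observation.
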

	
	\begin{proof}
		For any $u \in \mathbb{S}_{1}^\perp \cap \B_{\rho^*}^\mu$, the function $-u$ also belongs to $\mathbb{S}_{1}^\perp \cap \B_{\rho^*}^\mu$. Thus, it is readily seen that $dist(\mathbb{S}_{1}^\perp \cap \B_{\rho^*}^\mu, \p) = dist(\mathbb{S}_{1}^\perp \cap \B_{\rho^*}^\mu, -\p)$. By contradiction, we assume that
		$$
		dist(\mathbb{S}_{1}^\perp \cap \B_{\rho^*}^\mu, \p) = 0.
		$$
		Then there exist $\{u_n\} \subset \mathbb{S}_{1}^\perp \cap \B_{\rho^*}^\mu$, $\{w_n\} \subset \p$ such that $\|u_n - w_n\|_{H^1(\G)} \to 0$ as $n \to \infty$. Note that $\B_{\rho^*}^\mu$ is bounded in $H^1(\G)$. Thus $\{u_n\}$, and also $\{w_n\}$, are bounded in $H^1(\G)$. Up to a subsequence as $n \to \infty$, we can assume that $u_n \rightharpoonup u^*$  weakly in $H^1(\G)$, $u_n \to u^*$ strongly in $L^2(\G)$, $w_n \rightharpoonup w^* \in \p$ weakly in $H^1(\G)$, and $w_n \to w^*$ strongly in $L^2(\G)$. Then we observe that $u^* \in \mathbb{S}_{1}^\perp$ with $u^* \in H_{\mu}^1(\G)$
		which implies that $u^* \not\equiv 0$ is sign-changing. However, we know $u^* \equiv w^*$ from $\|u_n - w_n\|_{H^1(\G)} \to 0$, which contradicts that $w^* \in \p$. 
	\end{proof}

\begin{lemma} \label{lemwelldef}
	For $0 < \mu < \min\{\hat{\mu},\bar{\mu}\}$ and $0 < \nu < \delta$ where $\delta$ is given in Lemma \ref{lemfarawayfrom}, both values $c_1$ and $\bar{c}_1$ are well defined and $- \infty <\bar{c}_1 \leq c_1 < M_2$.
\end{lemma}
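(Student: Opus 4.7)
The plan is to verify four things in order: (a) $\cl_1 \neq \emptyset$ so that $c_1$ is a well-posed infimum; (b) for every $A \in \cl_1$, the inner set $A \cap S^*(\nu)$ is non-empty, so the inner supremum is not vacuous; (c) $\s_1^\perp \cap \B^{M_2} \neq \emptyset$ and $E(\cdot,\G)$ is bounded below on it, so $\bar{c}_1 \in \R$; and finally (d) the chain $-\infty < \bar{c}_1 \leq c_1 < M_2$.

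For (a), Lemma \ref{lemlnotemp} already provides $Q_2 \in \cl_1$ under the stated assumption $0 < \mu < \min\{\hat{\mu}, \bar{\mu}\}$. For (b), I would apply the linking definition to the continuous map $h = \mathrm{id}$ to conclude $A \cap \s_1^\perp \neq \emptyset$. Then, since every $v \in \s_1^\perp \cap \B_{\rho^*}^\mu$ satisfies $\mathrm{dist}(v, \pm\p) \geq \delta$ by Lemma \ref{lemfarawayfrom}, the choice $\nu < \delta$ forces $v \notin D^*(\nu)$, i.e., $v \in S^*(\nu)$. Hence $\emptyset \neq A \cap \s_1^\perp \subset A \cap S^*(\nu)$. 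Moreover, since $A \subset \B_{\rho^*}^\mu$ and $E(u,\G) \leq \sup_A E(\cdot,\G) < M_2$ for every $u \in A$, we also have the inclusion $A \cap \s_1^\perp \subset \s_1^\perp \cap \B^{M_2}$, which in particular establishes (c) by specializing to $A = Q_2$.

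The lower bound on $E(\cdot,\G)$ over $\B^{M_2}$ follows from Lemma \ref{lemgn}: on $\B_{\rho^*}^\mu$, the kinetic term is trivially bounded by $\rho^*/2$, while the $L^p$ term is controlled by $K\mu^{(p+2)/4}(\rho^*)^{(p-2)/4} + \ell^{(2-p)/2}\mu^{p/2}$, yielding $\bar{c}_1 > -\infty$. For (d), the inequality $c_1 < M_2$ is immediate from the definition of $\cl_1$: for each $A \in \cl_1$ one has $\sup_{A \cap S^*(\nu)} E(\cdot,\G) \leq \sup_A E(\cdot,\G) < M_2$, and infimum preserves this. Finally, to obtain $\bar{c}_1 \leq c_1$, I would fix an arbitrary $A \in \cl_1$, pick any $v \in A \cap \s_1^\perp$ supplied by step (b), note that $v \in A \cap S^*(\nu) \cap \B^{M_2}$, and conclude $\sup_{u \in A \cap S^*(\nu)} E(u,\G) \geq E(v,\G) \geq \bar{c}_1$; taking infimum over $A \in \cl_1$ gives the desired inequality.

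The only genuinely non-routine ingredient is the observation that linking together with Lemma \ref{lemfarawayfrom} forces $A \cap S^*(\nu)$ to contain $A \cap \s_1^\perp$, which is precisely what makes both the min-max level $c_1$ well-defined \emph{and} comparable to $\bar{c}_1$. Everything else reduces to unwinding definitions and invoking the Gagliardo--Nirenberg estimate from Lemma \ref{lemgn}.
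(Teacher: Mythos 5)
Your proposal is correct and follows essentially the same route as the paper: non-emptiness of $\cl_1$ via Lemma \ref{lemlnotemp}, the observation that any $A\in\cl_1$ meets $\s_1^\perp\cap\B^{M_2}\subset S^*(\nu)$ thanks to Lemma \ref{lemfarawayfrom} with $\nu<\delta$, boundedness of $E(\cdot,\G)$ on $\B_{\rho^*}^\mu$ via Lemma \ref{lemgn} for the lower bound, and the resulting chain $\bar{c}_1\le c_1<M_2$. The only cosmetic difference is that you bound $c_1$ above by $\sup_A E<M_2$ directly from the definition of $\cl_1$, whereas the paper invokes compactness of $A$; both are valid.
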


\begin{proof}
Let $0 < \mu < \min\{\hat{\mu},\bar{\mu}\}$. From the proof of Lemma \ref{lemlnotemp}, we know that $\sqrt{\mu}\phi_2 \in \B^{M_2}$ and thus, since $\sqrt{\mu}\phi_2 \in \s_1^\perp$, the set $\s_1^\perp \cap \B^{M_2}$ is not empty. This shows that $\bar{c}_1$ is well defined. Obviously, $\bar{c}_1 < M_2 < \infty$ and also, $ - \infty <\bar{c}_1 $, since Lemma \ref{lemgn} guarantees that $E(\cdot, \G)$ is bounded on bounded sets of $H^1(\G)$.
	
Lemma \ref{lemlnotemp} yields that $\cl_1$ is not empty for $0 < \mu < \min\{\hat{\mu},\bar{\mu}\}$. Furthermore, by the definitions of $\cl_1$ and the link, for any $A \subset \cl_1$ there exists $u_0 \in A$ such that $u_0 \in \s_1^\perp \cap \B^{M_2}$. For $0 < \nu < \delta$, by Lemma \ref{lemfarawayfrom} we have $u_0 \in S^*(\nu)$. Thus $A \cap S^*(\nu)$ is not empty for any $A \in \cl_1$ and we obtain
	\begin{align*}
		\bar{c}_1 = \inf_{u\in\s_1^\perp \cap \B^{M_2}}E(u,\G) \leq \inf_{A \in \cl_1}\sup_{u \in A\cap S^*(\nu)}E(u,\G) = c_1.
	\end{align*}
	Clearly also $c_1$ is finite. Indeed taking an arbitrary $A \in \cl_1$ we have that
	$$ \sup_{u \in A \cap S^*}E(u, \G) < \infty$$
	by the compactness of $A$. The proof is now complete.
\end{proof}

Setting 
$$\underline{c}_1 := \sup_{u \in \s_1}E(u,\G) = -\frac1p\ell^{\frac{2-p}2}\mu^{\frac p2},$$
we will also need the following lower bound.

\begin{lemma} \label{lemvaluesepe}
	Let $0 < \mu < \min\{\hat{\mu},\bar{\mu}, \mu^*\}$ where $\mu^* = \mu^*(\G,p) > 0$ is the positive solution of
	$$
	2\left( \frac{p-1}p\ell^{\frac{2-p}2} + K\la_2(\G)^{\frac{p-2}4}\right) \mu^{\frac {p-2}2} = \la_2(\G).
	$$
	Then $\bar{c}_1 > \underline{c}_1$.
\end{lemma}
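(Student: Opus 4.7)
The plan is to produce an explicit lower bound on $E(u,\G)$ for every $u \in \s_1^\perp \cap \B^{M_2}$ and compare it with the explicit value of $\underline{c}_1$. The starting point is that any $u \in \s_1^\perp$ has mean zero ($\bar u = 0$) because $\phi_1 \equiv 1/\sqrt{\ell}$; hence the Poincaré-type inequality gives
$$\int_\G |u'|^2\intd x \geq \la_2(\G) \int_\G |u|^2 \intd x = \la_2(\G)\mu.$$
On the other hand, since $u \in \B^{M_2} \subset \B^\mu_{\rho^*}$, the same quantity is at most $\rho^*$. Thus $t := \int_\G |u'|^2 \intd x$ ranges in $[\la_2(\G)\mu, \rho^*]$ (and the interval is non-degenerate, since by Lemma \ref{lemlnotemp} the element $\sqrt{\mu}\phi_2 \in \s_1^\perp \cap \B^{M_2}$, ensuring $\la_2(\G)\mu < \rho^*$ in the chosen range).

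Next, I would apply Lemma \ref{lemgn} with the observation that $\bar u = 0$, which gives
$$E(u,\G) \geq \frac12 t - K\mu^{\frac{p+2}4} t^{\frac{p-2}4} - \ell^{\frac{2-p}2}\mu^{\frac p2} = g(t) - \ell^{\frac{2-p}2}\mu^{\frac p2},$$
with $g$ as in Proposition \ref{propinf}. By that proposition, $g$ is strictly increasing on $(0, \bar\rho)$ and since $\rho^* \leq \bar\rho$, it is strictly increasing on $(0,\rho^*]$. Hence, using $t \geq \la_2(\G)\mu$,
$$E(u,\G) \geq g(\la_2(\G)\mu) - \ell^{\frac{2-p}2}\mu^{\frac p2} = \frac{\la_2(\G)\mu}{2} - K\la_2(\G)^{\frac{p-2}4}\mu^{\frac p2} - \ell^{\frac{2-p}2}\mu^{\frac p2}.$$

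It remains to check that the right-hand side exceeds $\underline{c}_1 = -\frac{1}{p}\ell^{\frac{2-p}2}\mu^{\frac p2}$. Rearranging this desired inequality, dividing by $\mu$, and collecting the $\ell^{(2-p)/2}$ terms yields exactly
$$\la_2(\G) > 2\mu^{\frac{p-2}2}\left( \frac{p-1}{p}\ell^{\frac{2-p}2} + K\la_2(\G)^{\frac{p-2}4}\right),$$
which is precisely the defining condition of $\mu^*$ translated into the statement $\mu < \mu^*$. Passing to the infimum over $u \in \s_1^\perp \cap \B^{M_2}$ then gives $\bar c_1 > \underline{c}_1$, as required.

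There is no substantial obstacle in this argument; it is an elementary optimization once one notices that the constraint $u \in \s_1^\perp$ forces both the Poincaré bound from below on $\int |u'|^2$ and the vanishing of the mean, which is what collapses the Gagliardo–Nirenberg bound of Lemma \ref{lemgn} to a sharp form. The slightly delicate bookkeeping point is verifying monotonicity of $g$ on the interval $[\la_2(\G)\mu, \rho^*]$, which rests on the comparison $\la_2(\G)\mu < \rho^*$ already implicit in the non-emptiness of $\s_1^\perp \cap \B^{M_2}$ established in Lemma \ref{lemlnotemp}.
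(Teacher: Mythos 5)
Your argument is correct and follows essentially the same route as the paper: the Poincaré bound $\int_\G|u'|^2\,dx\ge\la_2(\G)\mu$ on $\s_1^\perp$, the Gagliardo--Nirenberg estimate of Lemma \ref{lemgn} giving $E(u,\G)\ge g(t)-\ell^{\frac{2-p}{2}}\mu^{\frac p2}$, the monotonicity of $g$ on $(0,\rho^*)$, and the comparison with $\underline{c}_1$ that unwinds to exactly the defining equation of $\mu^*$. (The only cosmetic remark: the bound \eqref{ajout1} you invoke holds for all $u\in H^1_\mu(\G)$ regardless of $\bar u=0$, so that observation is only needed for the Poincaré step, not for the Gagliardo--Nirenberg step.)
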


\begin{proof}
We recall that the function $g(\rho) = \frac12\rho - K\mu^{\frac{p+2}4} \rho^{\frac{p-2}4}$, which was defined in Proposition \ref{propinf}, is strictly increasing in $(0,\rho^*)$. Noting that $\int_\G|u'|^2dx \geq \la_2(\G)\mu$, for any $u \in \s_1^\perp$, it follows
\begin{align*} 
		\inf_{u\in\s_1^\perp \cap \B^{M_2}}E(u,\G)
		& \geq \inf_{u\in\s_1^\perp \cap \B^{M_2}}\left( \frac12\int_\G|u'|^2dx - K\mu^{\frac{p+2}4}\left( \int_\G|u'|^2dx\right)^{\frac{p-2}4} - \ell^{\frac{2-p}2}\mu^\frac p2 \right) \nonumber \\
		& \geq g(\la_2(\G)\mu)- \ell^{\frac{2-p}2}\mu^\frac p2.
	\end{align*}
	From the definition of $g(\rho)$ and the one of $\mu^*$, we obtain, for $0 < \mu < \min\{\hat{\mu},\bar{\mu}, u^*\}$,  
    $$
    \bar{c}_1 = \inf_{u\in\s_1^\perp \cap \B^{M_2}}E(u,\G) > -\frac1p\ell^{\frac{2-p}2}\mu^{\frac p2} = \underline{c}_1.
    $$
    The proof is complete.
\end{proof}

In Section \ref{secproof} below, for sufficiently small $\nu > 0$, we will prove that there exists a sign-changing critical point (contained in $\B^{M_2} \cap S^*(\nu)$) of $E(\cdot,\G)$ in $H^1_\mu(\G)$ at the level $c_1$. 

\section{Invariant sets under the negative gradient flow} \label{secinv}

Let $\eta(t,\cdot)$ be a decreasing flow on $H^1_\mu(\G)$ corresponding to the gradient $\nabla E(\cdot,\G)$ ($\nabla E(\cdot, \G)$ means the gradient of $E(\cdot, \G)$ as a constrained functional on $H^1_\mu(\G)$) defined by
\begin{align} \label{eqflow}
	\left\{
	\begin{aligned}
		& \frac{\partial}{\partial t}\eta(t,u) = -h(\eta(t,u))y(\eta(t,u))\|\nabla E(\eta(t,u),\G)\|_{H^1(\G)}^{-2}\nabla E(\eta(t,u),\G), \\
		& \eta(0,u) = u,
	\end{aligned}
	\right.
\end{align}
where $y(u): H^1_\mu(\G) \to [0,1]$ is a locally Lipschitz continuous map such that
\begin{align*}
	y(u) :=
	\left\{
	\begin{aligned}
		& 1 \quad \text{for all } u \in H^1_\mu(\G) \backslash(\mathcal{K}[c_1-\bar{\epsilon},c_1+\bar{\epsilon}])_{\delta_1/2}, \\
		& 0 \quad \text{for all } u \in (\mathcal{K}[c_1-\bar{\epsilon},c_1+\bar{\epsilon}])_{\delta_1/3},
	\end{aligned}
	\right.
\end{align*}
and $$h(u):= \frac{dist(u,U_1)}{dist(u,U_2)+dist(u,U_1)}$$ with
\begin{align*}
	U_1 := \bigl\{u \in H^1_\mu(\G): |E(u,\G) - c_1| \geq 3 \epsilon_1\bigr\}, \quad U_2 := \bigl\{u \in H^1_\mu(\G): |E(u,\G)) - c_1| \leq 2 \epsilon_1\bigr\}. 
\end{align*}

Here $\delta_1 > 0$, $0< 3\epsilon_1 \leq \bar{\epsilon}$ will be chosen in the proof of Theorem \ref{thmsc}, and we recall that $\K[\alpha,\beta] = \bigl\{u \in H^1_\mu(\G): (E|_{H^1_\mu(\G)})'(u,\G) = 0, \alpha \leq E(u) \leq \beta\bigr\}.$

\begin{lemma} \label{lemgloext}
	Let $\eta(t,\cdot)$ be given by \eqref{eqflow}. Then for any $u \in \B^{M_2}$, with $\B^{M_2}$ defined in \eqref{ajout2},  $\eta(t,u)$ exists for $t \in [0,\infty)$, and, $\eta(t,u) \in \B^{M_2}$ for all $t > 0$.
\end{lemma}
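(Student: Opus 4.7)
The plan is to prove both claims — invariance of $\B^{M_2}$ and global existence of $\eta(\cdot,u)$ — by exploiting that \eqref{eqflow} is a rescaled negative gradient flow and that, by Proposition \ref{propinf}, the energy cannot escape through $\U_{\rho^*}^\mu$.

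First I would differentiate the energy along a trajectory. Because $\partial_t \eta$ is collinear with $-\nabla E(\eta,\G)$ with scalar coefficient $h(\eta)y(\eta)\|\nabla E(\eta,\G)\|_{H^1(\G)}^{-2}$, the chain rule gives
\begin{align*}
\frac{d}{dt} E(\eta(t,u),\G) = -h(\eta(t,u))\, y(\eta(t,u)) \leq 0.
\end{align*}
Hence $t \mapsto E(\eta(t,u),\G)$ is non-increasing, and in particular $E(\eta(t,u),\G) \leq E(u,\G) < M_2$ on the whole existence interval.

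Next, I would use Proposition \ref{propinf} to show that $\eta(t,u)$ cannot leave $\B_{\rho^*}^\mu$. Suppose, for contradiction, that $\eta(t_0,u) \notin \B_{\rho^*}^\mu$ at some $t_0$ in the maximal existence interval. Since $\eta(0,u) = u \in \B_{\rho^*}^\mu$ and $t \mapsto \eta(t,u)$ is continuous with values in $H^1_\mu(\G)$, there would be a first time $t_1 \in (0,t_0]$ with $\eta(t_1,u) \in \U_{\rho^*}^\mu$, giving $E(\eta(t_1,u),\G) \geq M_2$ by \eqref{lowerbound}; this contradicts the monotonicity above. Consequently $\eta(t,u) \in \B_{\rho^*}^\mu$, and combined with the energy bound this yields $\eta(t,u) \in \B^{M_2}$ for every $t$ where the flow is defined.

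Third, I would establish global existence by bounding the right-hand side of \eqref{eqflow} uniformly along the trajectory. Where $h(\eta)y(\eta) = 0$ it simply vanishes. On the \emph{active region} where $h(\eta)y(\eta) > 0$, the construction of $h$ and $y$ forces
\begin{align*}
|E(\eta,\G) - c_1| \leq 3\epsilon_1 \leq \bar{\epsilon}, \qquad dist\bigl(\eta,\K[c_1-\bar{\epsilon},c_1+\bar{\epsilon}]\bigr) \geq \delta_1/3,
\end{align*}
and by the previous step $\eta$ lies in $\B_{\rho^*}^\mu$. If $\|\nabla E(\cdot,\G)\|_{H^1(\G)}$ were not bounded below on this set, one could extract a bounded (PS) sequence at a level in $[c_1-\bar{\epsilon},c_1+\bar{\epsilon}]$; by Corollary \ref{corpscon} such a sequence would converge strongly to a point of $\K[c_1-\bar{\epsilon},c_1+\bar{\epsilon}]$, contradicting the $\delta_1/3$ lower bound on the distance. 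Hence $\|\nabla E(\eta,\G)\|_{H^1(\G)} \geq c > 0$ on the active region, so $\|\partial_t \eta\|_{H^1(\G)} \leq c^{-1}$. Together with $\eta(t,u) \in \B_{\rho^*}^\mu$, which is bounded in $H^1(\G)$, this excludes finite-time blow-up and extends $\eta(\cdot,u)$ to $[0,\infty)$.

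The main obstacle is precisely the uniform lower bound on $\|\nabla E\|$ on the active region in step three: it depends on choosing $\bar{\epsilon}$, $\epsilon_1$ and $\delta_1$ (in the proof of Theorem \ref{thmsc}) so that no critical point with energy in $[c_1-\bar{\epsilon},c_1+\bar{\epsilon}]$ lies within $\delta_1/3$ of the support of $h\cdot y$ — this is ultimately a Palais--Smale argument supplied by Corollary \ref{corpscon}. The invariance part is, by contrast, a one-line consequence of the energy drop combined with \eqref{lowerbound}.
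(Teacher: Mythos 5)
Your proposal is correct and follows essentially the same route as the paper: energy monotonicity along the flow plus the lower bound \eqref{lowerbound} on $\U_{\rho^*}^\mu$ gives invariance of $\B^{M_2}$, while Corollary \ref{corpscon} supplies the uniform lower bound on $\|\nabla E\|$ away from $(\K[c_1-\bar{\epsilon},c_1+\bar{\epsilon}])_{\delta_1/3}$, so the right-hand side of \eqref{eqflow} is uniformly bounded on $\B^{M_2}$ and the solution extends to $[0,\infty)$. The only cosmetic difference is that the paper states the gradient bound first and the invariance second, whereas you reverse the order; the logic is identical.
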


\begin{remark}
	 It is well known that $\eta(t,u)$ globally exists for any $u \in H^1_\mu(\G)$ if $E(\cdot,\G)$ satisfies the (PS) condition on $H^1_\mu(\G)$. However, the global existence result in the above lemma is not direct since the (PS) condition is lost in our setting. We shall make use of Lemma \ref{lemcompact} to prove it.
\end{remark}

\begin{proof}[Proof of Lemma \ref{lemgloext}]
From Corollary \ref{corpscon}, we deduce the existence of a positive constant $C > 0$ such that $\|\nabla E(u,\G)\|_{H^1(\G)} \geq 1/C$ for any $u \in  E(\cdot,\G)^{-1}[c_1- \bar{\epsilon}, c_1 + \bar{\epsilon}] \cap \B^{M_2} \backslash (\mathcal{K}[c_1-\bar{\epsilon},c_1+\bar{\epsilon}])_{\delta_1/3}$, and so 
\begin{equation}\label{global}
\|h(u)y(u)\|\nabla E(u,\G)\|_{H^1(\G)}^{-2}\nabla E(u,\G)\|_{H^1(\G)} = h(u)y(u)\|\nabla E(u,\G)\|_{H^1(\G)}^{-1} \leq C, \quad \text{for } u \in \B^{M_2}.
\end{equation}
It is standard that $\eta(t,u)$ locally exists for any $u \in H^1_\mu(\G)$.
Let us prove that $\eta(t,u) \in \B^{M_2}$ for any $u \in \B^{M_2}$ and all $t \in [0,T_u)$, where $T_u$ is the maximal time such that $\eta(t,u)$ exists. In fact,
	\begin{align*}
		E(\eta(t,u),\G) \leq E(u,\G) < M_2
	\end{align*}
	for any $u \in \B^{M_2}$ and $t \in [0,T_u)$. Hence, if $\eta(t_u,u) \notin \B^{M_2}$ for some $u \in \B^{M_2}$ and $0 < t_u < T_u$, then $\eta(t_u,u) \notin \B_{\rho^*}^\mu$. By continuity, there exists $t_u' \in (0,t_u]$ such that $\eta(t_u',u) \in \U_{\rho^*}^\mu$. By Proposition \ref{propinf}, we have $E(\eta(t_u',u),\G) \geq M_2$, in contradiction with
	\begin{align*}
		E(\eta(t_u',u),\G) \leq E(u,\G) < M_2.
	\end{align*}
	Thus $\B^{M_2}$ is an invariant subset and, in view of \eqref{global}, the global existence follows.
    
\end{proof}

To establish the existence of sign-changing critical points, we need to prove that, for sufficiently small $\nu > 0$, $D^*(\nu) \cap \B^{M_2}$ is invariant under the flow $\eta(t,\cdot)$, i.e., $\eta(t,u) \in D^*(\nu) \cap \B^{M_2}$ for any $u \in D^*(\nu) \cap \B^{M_2}$ and all $t > 0$. The main result in this section is as follows.

\begin{proposition} \label{propinva}
	Let $\eta(t,\cdot)$ be defined by \eqref{eqflow} and $0 < \mu \leq \tilde{\mu}$, where $\tilde{\mu} = \tilde{\mu}(\G,p)$ is the positive solution of 
	\begin{align*}
		pK\left( \frac2{(p-2)K}\right)^{\frac{p-2}{p-6}}\mu^{\frac{p-2}4} + p\ell^{\frac{2-p}2}\mu^{\frac{p-2}2} = 1.
	\end{align*}
	Then, there exists $\nu^* > 0$ such that when $0 < \nu < \nu^*$, we have $\eta(t,u) \in D^*(\nu) \cap \B^{M_2}$ for any $u \in D^*(\nu) \cap \B^{M_2}$ and all $t \in (0,\infty)$.
\end{proposition}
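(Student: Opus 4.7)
The plan is to invoke the abstract Br\'ezis--Martin framework of Proposition \ref{propbmmanifold}, Lemma \ref{lemconvex} and Corollary \ref{corbm}, applied separately to the convex cones $\mathcal{P}$ and $-\mathcal{P}$ of $H^1(\G)$. Since $\B^{M_2}$ is already invariant under $\eta(t,\cdot)$ by Lemma \ref{lemgloext}, and since $E(\cdot,\G)$ is even, it suffices to prove that $\p_\nu \cap \B^{M_2}$ is invariant under the flow for $\nu$ small enough; the invariance of $(-\p)_\nu \cap \B^{M_2}$ then follows by the symmetry $u\mapsto -u$ and, together with the invariance of $\B^{M_2}$, yields the conclusion on $D^*(\nu)\cap \B^{M_2}$.

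Setting $A(u) := (-\tfrac{d^2}{dx^2}+1)^{-1}(|u|^{p-2}u + \lambda_u u)$ one has $-\nabla E(u,\G) = A(u) - u$, so the flow equation reduces to $\partial_t \eta = \beta(\eta)\,(A(\eta) - \eta)$ with a scalar $\beta(\eta)\in [0,\infty)$. The instantaneous displacement is thus always along the segment from $\eta$ to its target $A(\eta)$, and because $\p$ (hence $\p_\nu$) is a convex cone, the tangential hypothesis required by Lemma \ref{lemconvex} boils down to showing that $A$ maps $\p_\nu \cap \B^{M_2}$ into $\p_\nu$ for $\nu$ sufficiently small. Given this, Proposition \ref{propbmmanifold} and Corollary \ref{corbm} provide the invariance with no further abstract input.

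The core of the proof is therefore to establish the positivity-preservation estimate $\mathrm{dist}_{H^1(\G)}(A(u),\p) \leq \nu$ for every $u \in \p_\nu \cap \B^{M_2}$. First I would make $\lambda_u$ explicit via the tangency condition $\langle \nabla E(u,\G), u\rangle_{H^1} = 0$, which expresses $\lambda_u$ as an algebraic combination of $\int_\G |u'|^2$, $\int_\G |u|^p$ and $\mu$. The uniform bound $\int_\G |u'|^2 \leq \rho^*$ available on $\B^{M_2}$ (with $\rho^*$ itself bounded as $\mu \to 0$), combined with the Gagliardo--Nirenberg inequality \eqref{ajout1}, yields a lower bound $\lambda_u \geq -L(\mu,p,\G)$ with $L \to 0$ as $\mu \to 0$. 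Using that $(-\tfrac{d^2}{dx^2}+1)^{-1}$ on $\G$ with Kirchhoff conditions is positivity-preserving (its Green's function is non-negative), I would then write $u = u_+ - u_-$ and show, via the Lipschitz continuity of $v \mapsto |v|^{p-2}v$ on the bounded set $\B^{M_2}$ and a continuity estimate for $u \mapsto \lambda_u$, that $A(u_+) \in \p$ up to an error controlled by $L\,\|u_+\|_{H^1}$ and that $\|A(u) - A(u_+)\|_{H^1}$ is controlled by $\|u_-\|_{H^1} \leq \nu$. The explicit threshold $\tilde\mu$ in the statement is calibrated so that the resulting numerical inequalities close, and it simultaneously fixes an admissible $\nu^*$.

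The main obstacle, as emphasized in the introduction right after the statement of Lemma \ref{lemconvex}, is the sign of $\lambda_u$: in the unconstrained problem $\lambda$ is a fixed positive parameter and the positivity mechanism of the resolvent is immediate, whereas here $\lambda_u$ depends on $u$ and can be negative, which a priori breaks the argument. The combined use of the bounded working set $\B^{M_2}$ and of the restriction $0<\mu\leq \tilde\mu$ is precisely what brings $\lambda_u$ back under sufficient control to close the positivity estimate and to appeal to Corollary \ref{corbm} for the invariance conclusion.
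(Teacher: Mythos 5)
Your overall architecture is exactly the paper's: invariance of $\B^{M_2}$ from Lemma \ref{lemgloext}, the tangency condition \eqref{eqbm} checked via Lemma \ref{lemconvex} with $\tilde B=(\pm\p)_\nu$, Corollary \ref{corbm} to conclude, and everything reduced to showing that $G(u)=(-\frac{d^2}{dx^2}+1)^{-1}(|u|^{p-2}u+\la_u u)$ maps $(\pm\p)_\nu\cap\B_{\rho^*}^\mu$ into $(\pm\p)_{\nu/2}$ (this is Lemma \ref{lemG(u)}). But your treatment of that core step has two genuine gaps. First, the multiplier: $\la_u$ is not an algebraic combination of $\int_\G|u'|^2$, $\int_\G|u|^p$ and $\mu$; the tangency is the $L^2$-orthogonality $\int_\G u\,G(u)\,dx=\mu$, which gives $\la_u=\bigl(\mu-\int_\G|u|^{p-2}u\,\xi\,dx\bigr)/\|\xi\|_{H^1(\G)}^2$ with $\xi=(-\frac{d^2}{dx^2}+1)^{-1}u$. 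More importantly, a lower bound $\la_u\ge -L$ with $L\to0$ as $\mu\to0$ does not suffice: if $\la_u<0$, then $|u^+|^{p-2}u^+ +\la_u u^+$ is negative wherever $0<u^+<|\la_u|^{1/(p-2)}$, and the resulting defect in $dist(G(u),\p)$ is of order $|\la_u|\,\|u^+\|_{H^1(\G)}\sim L\sqrt{\mu}$, a quantity independent of $\nu$; since the conclusion must hold for all $\nu<\nu^*$, hence for $\nu$ arbitrarily small at fixed $\mu$, this error cannot be absorbed. The threshold $\tilde\mu$ is calibrated precisely so that \eqref{ajout1} gives $\int_\G|u|^{p-2}u\,\xi\,dx\le\mu$ on $\B_{\rho^*}^\mu$, hence $\la_u\ge 0$ exactly; you need that exact sign, not a vanishing lower bound.

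Second, even with $\la_u\ge0$, your estimate ``$\|G(u)-G(u^+)\|_{H^1(\G)}$ is controlled by $\|u^-\|_{H^1(\G)}\le\nu$'' carries a multiplicative constant involving the uniform upper bound $\bar\la$ of $\la_u$ on $\B_{\rho^*}^\mu$, and there is no reason for that constant to be below $1/2$ (or $1$); so ``$\le C\nu$'' does not yield ``$\le\nu/2$''. The paper circumvents this by testing against $z^+$ (for $u\in(-\p)_\nu$, $z=G(u)$), using a three-exponent H\"older inequality to produce $dist(z,-\p)\le C_2\bigl(\nu^{p-2}+\bar\la\,|\{x\in\G: z(x)>0\}|^{\frac{p-2}{p}}\bigr)\nu$, and then proving by a compactness argument combined with the strong maximum principle that $\sup\{|\{x\in\G: G(u)(x)>0\}|: u\in(-\p)_\nu\cap\B_{\rho^*}^\mu\}\to0$ as $\nu\to0$, which makes the prefactor smaller than $1/2$. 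This measure-theoretic step is essential to close the estimate, and your proposal offers no substitute for it.
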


The following Proposition \ref{propbm} has been successfully used to prove the invariance property of sets such as $(\pm \p)_\nu$ under a negative gradient flow in unconstrained problems, see, e.g., \cite[Section 2]{SchZ1}, \cite[Section 2]{SchZ} and \cite[Lemma 2.3]{ZZ}. Readers can consult \cite[Theorem 4.1]{Dei} for its proof. Similar results can be found in \cite[Theorem 1]{Bre} for finite-dimensional Euclidean space cases and in \cite[Theorem 3]{Mar1} or \cite{Mar} for Banach space cases.
\medskip

\begin{proposition}[Br\'ezis-Martin] \label{propbm}
	Let $X$ be a Banach space, $B \subset X$, $u \in B$, $U_X$ be a neighborhood of $u$ in $X$ and $B \cap \overline{U_X}$ be closed. If $\widetilde V: \overline{U_X} \to X$ is a Lipschitz mapping and
	\begin{align} \label{eqd=0banachcase}
		\lim_{s \searrow 0}s^{-1}dist(w + s\widetilde V(w),B) = 0, \quad \forall w \in \overline{U_X} \cap B,
	\end{align}
	then there exist $r = r(u) > 0$ and $\tilde\eta(t,u)$ satisfying
	\begin{align*}
		\left\{
		\begin{aligned}
			& \frac{\partial}{\partial t}\tilde\eta(t,u) = \widetilde V(\tilde\eta(t,u)), \quad \forall t \in [0,r), \\
			& \tilde\eta(0,u) = u, \quad \tilde\eta(t,u) \in B.
		\end{aligned}
		\right.
	\end{align*}
\end{proposition}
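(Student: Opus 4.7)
The plan is to reduce the claim to the flow-invariance of each of the two pieces $\p_\nu \cap \B^{M_2}$ and $(-\p)_\nu \cap \B^{M_2}$ separately, and, for each piece, to apply the manifold version of the Br\'ezis--Martin theorem (Proposition \ref{propbmmanifold} together with Corollary \ref{corbm}) developed earlier in Section \ref{secinv}, with the tangential hypothesis checked via Lemma \ref{lemconvex}. The crucial quantitative input is that the Lagrange multiplier $\lambda_u$ is nonnegative uniformly on $\B^{M_2}$, which is precisely what the threshold $\mu \leq \tilde{\mu}$ encodes.

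First, by Lemma \ref{lemgloext}, $\eta(t,u)$ exists for all $t \geq 0$ whenever $u \in \B^{M_2}$ and stays in $\B^{M_2}$, so the whole argument takes place on a bounded piece of the manifold. Since $D^*(\nu) = \p_\nu \cup (-\p)_\nu$, it suffices to establish invariance of $\p_\nu \cap \B^{M_2}$; the case $(-\p)_\nu$ is symmetric via $u \mapsto -u$.

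Next, I would extract the sign of $\lambda_u$. Using that $\nabla E(u,\G)$ lies in the tangent space at $u$, one gets
\begin{equation*}
\lambda_u = 1 + \frac{\int_\G |u'|^2\,dx - \int_\G |u|^p\,dx}{\mu}.
\end{equation*}
For $u \in \B^{M_2}$ one has $\int_\G |u'|^2\,dx < \rho^* \leq b = (2/((p-2)K))^{4/(p-6)}$, so Lemma \ref{lemgn} yields
\begin{equation*}
\int_\G |u|^p\,dx \;\leq\; \mu\left[\,pK\Big(\tfrac{2}{(p-2)K}\Big)^{\frac{p-2}{p-6}}\mu^{\frac{p-2}{4}} + p\ell^{\frac{2-p}{2}}\mu^{\frac{p-2}{2}}\,\right].
\end{equation*}
The bracket is strictly increasing in $\mu$ (since $p > 6$) and equals $1$ exactly at $\mu = \tilde{\mu}$ by the defining equation of $\tilde{\mu}$; therefore for $0 < \mu \leq \tilde{\mu}$ we get $\int_\G |u|^p\,dx \leq \mu$, which forces $\lambda_u \geq \int_\G |u'|^2\,dx/\mu \geq 0$ everywhere on $\B^{M_2}$.

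The Br\'ezis--Martin tangential condition to verify is $\lim_{s \searrow 0} s^{-1}\,\mathrm{dist}_{H^1(\G)}\bigl(u + sV(u),\,\p_\nu\bigr) = 0$ for every $u \in \overline{\p_\nu \cap \B^{M_2}}$, where $V(u) = -h(u)y(u)\|\nabla E(u,\G)\|_{H^1}^{-2}\nabla E(u,\G)$. Writing
\begin{equation*}
u + sV(u) \;=\; (1 - s\beta(u))\,u \;+\; s\beta(u)\,\bigl(-\partial_x^{2}+1\bigr)^{-1}\bigl[(|u|^{p-2} + \lambda_u)u\bigr],
\end{equation*}
with $\beta(u) = h(u)y(u)\|\nabla E(u,\G)\|_{H^1}^{-2} \geq 0$, the positivity-preserving property of $(-\partial_x^{2}+1)^{-1}$ on the compact graph $\G$ with Kirchhoff conditions, together with $\lambda_u \geq 0$, shows that $u + sV(u) \in \p$ for every $u \in \p$ and small $s \geq 0$. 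This is the concrete positivity input which Lemma \ref{lemconvex} converts abstractly into the tangential distance estimate on the thickened set $\p_\nu$. Corollary \ref{corbm} then gives invariance of $\p_\nu \cap \B^{M_2}$ under $\eta$ on the maximal existence interval, which is $[0,\infty)$ by Lemma \ref{lemgloext}. The symmetric argument for $(-\p)_\nu \cap \B^{M_2}$ closes the proof, and one can take any $\nu^* > 0$ no larger than the constant $\delta$ from Lemma \ref{lemfarawayfrom} and within the smallness dictated by Lemma \ref{lemconvex}.

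The main obstacle I expect is not the sign of $\lambda_u$ (a direct Gagliardo--Nirenberg computation that matches the explicit definition of $\tilde{\mu}$), but the passage from invariance of the bare cone $\p$ (immediate from positivity of $(-\partial_x^{2}+1)^{-1}$ combined with $\lambda_u \geq 0$) to invariance of the thickened neighborhood $\p_\nu$ in the $H^1(\G)$-topology. That step is precisely the role of the abstract convexity Lemma \ref{lemconvex}: it must be fed with a $H^1$-projection/decomposition onto $\p$ (e.g.\ via $u = u^+ - u^-$) together with a control of the nonlinear $u$-dependence of both $|u|^{p-2}u$ and $\lambda_u$ near the cone, so that the limit $s^{-1}\,\mathrm{dist}_{H^1(\G)}(u + sV(u),\p_\nu) \to 0$ survives the perturbation $u \in \p_\nu \setminus \p$. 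Once these ingredients are in place, the conclusion follows directly from the manifold Br\'ezis--Martin framework.
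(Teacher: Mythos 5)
Your proposal does not prove the statement in question. Proposition \ref{propbm} is the abstract Br\'ezis--Martin flow-invariance theorem in a general Banach space: given a Lipschitz field $\widetilde V$ on $\overline{U_X}$ and the sub-tangential condition \eqref{eqd=0banachcase}, the local integral curve through $u$ remains in $B$. What you have written is instead a proof sketch of Proposition \ref{propinva} (the invariance of $D^*(\nu)\cap\B^{M_2}$ under the specific negative gradient flow), i.e.\ of a downstream \emph{application}. Worse, your argument invokes Proposition \ref{propbmmanifold}, Corollary \ref{corbm} and Lemma \ref{lemconvex} as tools; in the paper those results are themselves \emph{deduced from} Proposition \ref{propbm}, so using them here would be circular. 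None of the concrete ingredients you discuss --- the sign of $\lambda_u$, the positivity of $(-\partial_x^2+1)^{-1}$, the threshold $\tilde\mu$, the cones $\p_\nu$ --- can appear in a proof of Proposition \ref{propbm}, which makes no reference to the graph, the energy, or the mass constraint.

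For the record, the paper does not prove Proposition \ref{propbm} at all: it is quoted from the literature (Deimling, \emph{Ordinary Differential Equations in Banach Spaces}, Theorem 4.1; see also Brezis and Martin for the Euclidean and Banach-space antecedents). A self-contained proof would run along entirely different lines: local existence and uniqueness of $\tilde\eta(\cdot,u)$ follow from the Lipschitz continuity of $\widetilde V$ via Picard--Lindel\"of; one then sets $\phi(t)=\mathrm{dist}(\tilde\eta(t,u),B)$ and uses the sub-tangential condition \eqref{eqd=0banachcase} together with the Lipschitz constant $L$ of $\widetilde V$ to show that the upper right Dini derivative satisfies $D^+\phi(t)\leq L\,\phi(t)$, whence $\phi\equiv 0$ by Gronwall since $\phi(0)=0$ and $B\cap\overline{U_X}$ is closed. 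If you want to supply a proof for this proposition, that differential-inequality argument is the missing idea; the material you wrote belongs to the proof of Proposition \ref{propinva} instead.
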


Since there is a constraint in our setting, Proposition \ref{propbm} cannot be used directly to prove Proposition \ref{propinva}. First we shall extend it to a setting which will cover the mass constrained problem that we consider. Note that the setting and the results in Proposition \ref{propbmmanifold}, Lemma \ref{lemconvex}, and Corollary \ref{corbm} presented below may have an interest to treat other constrained problems.

Let $(E,\langle \cdot,\cdot\rangle)$ and $(H,( \cdot,\cdot))$ be two \emph{infinite-dimensional} Hilbert spaces and assume that
\begin{align*}
	E \hookrightarrow H \hookrightarrow E',
\end{align*}
with continuous injections. For simplicity, we assume that the continuous injection $E \hookrightarrow H$ has norm at most $1$ and identify $E$ with its image in $H$. We also introduce
\begin{align*}
	\left\{
	\begin{aligned}
		\|u\|^2 & = \langle u,u \rangle, \\
		|u|^2 \ & = (u,u), \\
	\end{aligned}
	\right.
	\quad \quad u \in E,
\end{align*}
and, for $\mu \in (0,\infty)$, we define
\begin{align*}
	S_\mu := \{u \in E: |u|^2 = \mu \}.
\end{align*}
For our application, it is plain that $E = H^1(\G)$ and $H = L^2(\G)$.

\begin{proposition} \label{propbmmanifold}
	Let $u \in B_\mu \subset S_\mu$, $U$ be a neighborhood of $u$ in $S_\mu$ and $B_\mu \cap \overline{U}$ closed in $S_\mu$. Let $V: \overline{U} \to E$ be a Lipschitz mapping on $\overline{U}$ with $V(w) \in T_wS_\mu$ for any $w \in \overline{U}$, where $T_wS_\mu$ is the tangent space of $S_\mu$ at $w$ defined as
	\begin{align*}
		T_wS_\mu := \bigl\{\phi \in E: (\phi,w) = 0\bigr\},
	\end{align*}
    which satisfies
    \begin{align} \label{eqd=0}
    	\lim_{s \searrow 0}s^{-1}dist(w + sV(w),B_\mu) = 0, \quad \forall w \in \overline{U} \cap B_\mu.
    \end{align}
    Then  there exist $r = r(u) > 0$ and $\eta(t,u)$ satisfying 
	\begin{align*} 
		\left\{
			\begin{aligned}
				& \frac{\partial}{\partial t}\eta(t,u) = V(\eta(t,u)), \quad \forall t \in [0,r), \\
				& \eta(0,u) = u, \quad \eta(t,u) \in B_\mu.
			\end{aligned}
		\right.
	\end{align*}
\end{proposition}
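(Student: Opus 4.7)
The plan is to reduce Proposition \ref{propbmmanifold} to the ambient Brezis--Martin theorem (Proposition \ref{propbm}) applied with $X=E$ and $B=B_\mu$. For this I will extend $V$ to an $E$-Lipschitz field $\widetilde V$ on an $E$-neighborhood of $u$ that coincides with $V$ on $B_\mu$, and then verify that the sub-tangential hypothesis required by Proposition \ref{propbm} for $\widetilde V$ is exactly \eqref{eqd=0}.

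\emph{The extension.} Since $|u|^2=\mu>0$, the radial retraction $\pi(w):=\sqrt{\mu}\,w/|w|$ is smooth on a full $E$-neighborhood of $u$ and takes values in $S_\mu$. Using $\pi(u)=u\in U$ together with openness of $U$ in $S_\mu$, I fix a closed $E$-ball $\overline{U_E}$ around $u$ so small that $\pi(\overline{U_E})\subset U$ and $\overline{U_E}\cap S_\mu\subset\overline{U}$. On $\overline{U_E}$ I define
\[
\widetilde V(w):=V(\pi(w))-\mu^{-1}\bigl(V(\pi(w)),w\bigr)\,w.
\]
The map $\pi$ is $E$-Lipschitz on $\overline{U_E}$ (as $|w|$ is bounded away from $0$), $V$ is Lipschitz on $\overline{U}$, and the rank-one correction is bounded bilinear in $(V(\pi(w)),w)$ (using the continuity of $(\cdot,\cdot)$ on $E\times E$ induced by $E\hookrightarrow H$); hence $\widetilde V:\overline{U_E}\to E$ is Lipschitz. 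The crucial point is that for $w\in B_\mu\cap\overline{U_E}\subset S_\mu$ one has $\pi(w)=w$ and $V(w)\in T_wS_\mu$, i.e. $(V(w),w)=0$, so $\widetilde V(w)=V(w)$ on $B_\mu\cap\overline{U_E}$.

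\emph{Checking the Brezis--Martin hypotheses.} First, $B_\mu\cap\overline{U_E}$ is closed in $E$: if $w_n\in B_\mu\cap\overline{U_E}$ with $w_n\to w$ in $E$, continuity of $|\cdot|$ gives $w\in S_\mu$, hence $w\in\overline{U_E}\cap S_\mu\subset\overline{U}$; since $w_n\to w$ also in the $S_\mu$-topology and $B_\mu\cap\overline{U}$ is closed in $S_\mu$, we conclude $w\in B_\mu\cap\overline{U_E}$. Second, for any $w\in\overline{U_E}\cap B_\mu$ we have $w+s\widetilde V(w)=w+sV(w)$, so hypothesis \eqref{eqd=0} is literally the distance condition \eqref{eqd=0banachcase} required for $\widetilde V$, and the $E$-norm used in the distance in \eqref{eqd=0banachcase} is the same norm in which $\operatorname{dist}(\cdot,B_\mu)$ is measured in \eqref{eqd=0} because $B_\mu\subset S_\mu\subset E$.

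\emph{Conclusion.} Proposition \ref{propbm} then produces $r=r(u)>0$ and a curve $\eta(\cdot,u):[0,r)\to B_\mu$ with $\eta(0,u)=u$ and $\dot\eta=\widetilde V(\eta)$. Because $\eta(t,u)\in B_\mu\subset S_\mu$ throughout, $\widetilde V(\eta(t,u))=V(\eta(t,u))$, so $\eta$ solves the prescribed ODE. The only real obstacle in the argument is the construction of $\widetilde V$: the rank-one subtraction is dictated by two simultaneous requirements---$\widetilde V$ must restrict to $V$ on $S_\mu$ (so that \eqref{eqd=0} transfers unchanged) and must make sense on an $E$-neighborhood (so that Proposition \ref{propbm} is directly applicable)---and it is essential that $u\neq 0$ in $H$, which guarantees Lipschitz regularity of $\pi$ and of the correction term on a full $E$-ball around $u$.
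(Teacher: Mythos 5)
Your proof is correct, and it takes a genuinely different route to the same reduction to Proposition \ref{propbm}. The paper extends $V$ positively homogeneously, $\widetilde V(w)=\mu^{-1/2}|w|\,V(\sqrt{\mu}\,w/|w|)$, and applies Br\'ezis--Martin to the radially thickened set $B=\{\theta w:\theta\in[1/2,3/2],\,w\in B_\mu\}$, for which the subtangency is inherited from \eqref{eqd=0} by scaling (if $v_s\in B_\mu$ approximates $w+sV(w)$, then $\theta v_s\in B$ approximates $\theta w+s\widetilde V(\theta w)$); it then needs a final local-uniqueness argument to identify the Br\'ezis--Martin trajectory in $B$ with the flow of $V$ on $S_\mu$, so as to land in $B\cap S_\mu=B_\mu$. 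You instead precompose with the radial retraction $\pi$ and keep $B=B_\mu$ itself; since $\widetilde V=V$ on $B_\mu\cap\overline{U_E}$, hypothesis \eqref{eqd=0banachcase} is literally \eqref{eqd=0}, and Br\'ezis--Martin confines the trajectory to $B_\mu\subset S_\mu$ directly, where $\widetilde V$ and $V$ coincide along the trajectory --- no uniqueness step is required. This is a mild simplification, and your verifications of the two points that actually need care (closedness of $B_\mu\cap\overline{U_E}$ in $E$, and Lipschitz continuity of $\pi$ on a ball where $|w|$ stays away from $0$, guaranteed by $|u|=\sqrt{\mu}>0$) are sound. One small remark: the rank-one correction $-\mu^{-1}\bigl(V(\pi(w)),w\bigr)w$ is superfluous rather than ``dictated'' --- it vanishes identically on $S_\mu\cap\overline{U_E}$ (there $\pi(w)=w$ and $(V(w),w)=0$), nothing in the argument requires $\widetilde V$ to be tangential off $S_\mu$, and as written it does not even achieve $(\widetilde V(w),w)=0$ off $S_\mu$ since $(\widetilde V(w),w)=\bigl(V(\pi(w)),w\bigr)\bigl(1-|w|^2/\mu\bigr)$; the bare extension $\widetilde V=V\circ\pi$ already has both properties you need. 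Keeping it does no harm, since it is Lipschitz on $\overline{U_E}$ as you check.
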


\begin{proof}
	
	Let us construct a suitable extension of $V$ on a neighbourhood of $u$ in $E$. Setting
	\begin{align*}
		B & := \bigl\{\theta w: \theta \in [1/2,3/2], w \in B_\mu\bigr\}, \\
		U_E & := \bigl\{\theta w: \theta \in (1/2,3/2), w \in U\bigr\},
	\end{align*}
    it is not difficult to check that $u \in B \subset E$, $U_E$ is a neighborhood of $u$ in $E$ and $B \cap \overline{U_E}$ is closed in $E$. For any $w \in \overline{U_E}$, we define
    \begin{align*}
    	\widetilde V(w) := \frac{|w|}{\sqrt \mu}V(\sqrt \mu w/|w|) \in E.
    \end{align*}
    Let us check that $\widetilde{V}$ is a Lipschitz mapping. Since $V$ is Lipschitz, for any $w, w' \in \overline{U}$,
    \begin{align*}
    	\|V(w')-V(w)\| \leq C\|w'-w\|.
    \end{align*}
    Here and in the sequel in this proof, $C > 0$ is a constant which may change line by line. Then, for any $\theta w, \theta'w' \in \overline{U_E}$ with $w, w' \in \overline{U}$,
    \begin{align*}
    	\|\widetilde{V}(\theta'w') - \widetilde{V}(\theta w)\| &
    	= \|\theta'V(w') - \theta V(w)\| \\
    	& \leq \theta'\|V(w')-V(w)\| + |\theta'-\theta|\|V(w)\| \\
    	& \leq C\theta'\|w'-w\| + |\theta'-\theta |\|V(w)\|. 
    \end{align*}
    Note that $\theta' = |\theta'w'|/\sqrt{\mu}$ and $\theta  = |\theta w|/\sqrt{\mu}$. We deduce
    \begin{align*}
    	|\theta'-\theta| \leq \mu^{-\frac12}|\theta'w'-\theta w|.
    \end{align*}
    Hence,
    \begin{align*}
    	\|\widetilde{V}(\theta'w') - \widetilde{V}(\theta w)\| & \leq C\|\theta'w'-\theta'w\| + |\theta'-\theta|\|V(w)\| \\
    	& \leq C\|\theta'w'-\theta w\| + \left( C\|w\|+\|V(w)\|\right) |\theta'-\theta| \\
    	& \leq C\|\theta'w'-\theta w\|+C\mu^{-\frac12}|\theta'w'-\theta w| \\
    	& \leq C\|\theta'w'-\theta w\|.
    \end{align*}
    Now, we apply Proposition \ref{propbm} with $X = E$ and $\widetilde V(u) = \mu^{-1/2}|u|V(\sqrt \mu u/|u|)$. We verify \eqref{eqd=0banachcase}. For any $\theta w \in \overline{U_E} \cap B$ with $w \in \overline{U} \cap B_\mu$, using \eqref{eqd=0} we get the existence of $v_s \in B_\mu$ such that 
    \begin{align*}
    	\lim_{s \searrow 0}s^{-1}\|w + sV(w)-v_s\| = 0.
    \end{align*}
    Hence,
    \begin{align*}
    	\lim_{s \searrow 0}s^{-1}dist(\theta w + s\widetilde V(\theta w),B) & \leq \lim_{s \searrow 0}s^{-1}\|\theta w + s\theta V(w) - \theta v_s\| = 0.
    \end{align*}
    Then, by Proposition \ref{propbm} we obtain the existence of $r = r(u) > 0$ and $\tilde\eta(t,u)$ such that 
    \begin{align*}
    	\left\{
    	\begin{aligned}
    		& \frac{\partial}{\partial t}\tilde\eta(t,u) = \widetilde V(\tilde\eta(t,u)), \quad \forall t \in [0,r), \\
    		& \tilde\eta(0,u) = u \in B_\mu, \quad \tilde\eta(t,u) \in B.
    	\end{aligned}
    	\right.
    \end{align*}
    Let $\eta(\cdot,u): [0,T) \to S_\mu$ be the local solution of 
    \begin{align*}
    	\left\{
    	\begin{aligned}
    		& \frac{\partial}{\partial t}\eta(t,u) = V(\eta(t,u)), \\ 
    		& \eta(0,u) = u.
    	\end{aligned}
        \right.	
    \end{align*}
   Clearly $\eta(\cdot,u)$ can be viewed as a function mapping $[0,T)$ to $E$ since $S_\mu \subset E$. It is not difficult to check that $\eta(\cdot,u): [0,T) \to E$ satisfies
    \begin{align} \label{eqetaonE}
    	\left\{
    	\begin{aligned}
    		& \frac{\partial}{\partial t}\eta(t,u) = \widetilde V(\eta(t,u)), \\ 
    		& \eta(0,u) = u.
    	\end{aligned}
    	\right.	
    \end{align}
    By the local existence and uniqueness of the solution for equation \eqref{eqetaonE} and by decreasing $r >0$ if necessary, we know that $\eta(t,u) \equiv \tilde \eta(t,u)$ for $t \in [0,r)$. Hence, $\eta(t,u) \in B \cap S_u = B_\mu$ for all $t \in [0,r)$ and this completes the proof.
\end{proof}

The following corollary of Proposition \ref{propbmmanifold} will be crucial for our problem.

\begin{corollary} \label{corbm}
	Let $B_\mu \subset S_\mu$ be  closed in $S_\mu$. 
	Suppose that $V$ is a locally Lipschitz mapping with $V(u) \in T_uS_\mu$ for any $u \in S_\mu$ and define $\eta(t,\cdot): S_\mu \to S_\mu$ by
	\begin{align*}
		\left\{
		\begin{aligned}
			& \frac{\partial}{\partial t}\eta(t,u) = V(\eta(t,u)), \\
			& \eta(0,u) = u.
		\end{aligned}
		\right.
	\end{align*}
    Let $T_u > 0$ be the maximal time such that $\eta(\cdot,u)$ exists for any $u \in S_\mu$ and $B^* \subset S_\mu$ be an open subset such that $\eta(t,u) \in B^*$ for any $u \in B^*$ and all $0 < t < T_u$. Then, if
	\begin{align} \label{eqbm}
		\lim_{s \searrow 0}s^{-1}dist(u + sV(u),B_\mu) = 0, \quad \forall u \in B_\mu \cap B^*,
	\end{align}
	it holds that $\eta(t,u) \in B_\mu \cap B^*$ for any $u \in B_\mu \cap B^*$ and all $0 < t < T_u$.
\end{corollary}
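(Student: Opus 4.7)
The plan is to deduce this from Proposition \ref{propbmmanifold} by a classical maximal-interval-of-invariance argument. Fix $u \in B_\mu \cap B^*$ and set
$$T^* := \sup\bigl\{t \in [0, T_u) : \eta(s,u) \in B_\mu \text{ for every } s \in [0,t]\bigr\}.$$
The goal is to prove $T^* = T_u$; since $\eta(\cdot,u)$ already stays in $B^*$ by hypothesis on $B^*$, this would immediately yield the conclusion $\eta(t,u) \in B_\mu \cap B^*$ for $0<t<T_u$.

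First I would show $T^* > 0$. Because $V$ is locally Lipschitz and $B^*$ is open in $S_\mu$, I can pick a neighbourhood $U$ of $u$ in $S_\mu$ whose closure $\overline{U}$ is contained in $B^*$ and on which $V$ is Lipschitz. Then $B_\mu \cap \overline{U}$ is closed in $S_\mu$ (intersection of two sets closed in $S_\mu$), we have $V(w) \in T_w S_\mu$ for every $w \in \overline{U}$, and the assumption \eqref{eqbm} of the corollary holds on $\overline{U} \cap B_\mu \subset B^* \cap B_\mu$, so it supplies exactly the hypothesis \eqref{eqd=0} of Proposition \ref{propbmmanifold}. That proposition then produces an $r > 0$ and a curve $\tilde\eta(\cdot,u)$ in $B_\mu$ solving the Cauchy problem for $V$ on $[0,r)$. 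By local uniqueness of solutions of ODEs with locally Lipschitz right-hand side, this curve coincides with $\eta(\cdot,u)$ on $[0,r)$, so $\eta(t,u) \in B_\mu$ for $t \in [0,r)$ and $T^* \geq r > 0$.

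Next I would argue $T^* = T_u$ by contradiction. Suppose $T^* < T_u$. Then $\eta(\cdot,u)$ is defined and continuous at $T^*$, and since $\eta(s,u) \in B_\mu$ for every $s \in [0,T^*)$ while $B_\mu$ is closed in $S_\mu$, sending $s \nearrow T^*$ gives $\eta(T^*,u) \in B_\mu$. Moreover, $\eta(T^*,u) \in B^*$ by the invariance hypothesis on $B^*$. Repeating the construction above at the point $\eta(T^*,u)$ — choose a neighbourhood $U'$ of $\eta(T^*,u)$ in $S_\mu$ with $\overline{U'} \subset B^*$ on which $V$ is Lipschitz — Proposition \ref{propbmmanifold} applied with initial datum $\eta(T^*,u) \in B_\mu \cap \overline{U'}$ produces an $r' > 0$ and a solution $\bar\eta$ staying in $B_\mu$ on $[0,r')$. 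By local uniqueness of the ODE, $\eta(T^* + t, u) = \bar\eta(t)$ for $t \in [0,\min\{r', T_u - T^*\})$, so $\eta(\cdot,u) \in B_\mu$ strictly beyond $T^*$, contradicting the definition of $T^*$. Hence $T^* = T_u$.

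The step I expect to be the main subtlety is the selection of the neighbourhoods $U$ and $U'$ with closures contained in $B^*$: this uses crucially that $B^*$ is open, and it is precisely what allows the weaker hypothesis \eqref{eqbm} (only required on $B_\mu \cap B^*$) to be transferred to the hypothesis \eqref{eqd=0} of Proposition \ref{propbmmanifold} (required on $\overline{U} \cap B_\mu$). Everything else — closedness of $B_\mu \cap \overline{U}$ in $S_\mu$, the tangent-space condition, and the uniqueness step — is routine.
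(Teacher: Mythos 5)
Your proposal is correct and follows essentially the same route as the paper's proof: localize at a point of $B_\mu\cap B^*$ using the openness of $B^*$ and the local Lipschitz property of $V$ to invoke Proposition \ref{propbmmanifold}, then run a maximal-time/continuation argument using the closedness of $B_\mu$ and the forward invariance of $B^*$. The only cosmetic difference is that you make the local-uniqueness identification of the proposition's curve with the flow $\eta$ explicit, which the paper leaves implicit.
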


\begin{proof}
	For any $u \in B_\mu \cap B^*$, since $B^*$ is open and $V$ is a locally Lipschitz mapping, we can choose a neighborhood $U$ of $u$ in $S_\mu$ such that $\overline{U} \subset B^*$ and $V$ is Lipschitz on $\overline{U}$. Also, it is clear that $B_\mu \cap \overline{U}$ is closed since $B_\mu$ is (locally) closed in $S_\mu$ and that \eqref{eqbm} implies
	\begin{align*}
		\lim_{s \searrow 0}s^{-1}dist(w + sV(w),B_\mu) = 0, \quad \forall w \in \overline{U} \cap B_\mu.
	\end{align*}
	Proposition \ref{propbmmanifold} then yields that $\eta(t,u) \in B_\mu$ for all $t \in [0,r)$ for some $0 < r \leq T_u$. By the assumption on $B^*$, we know $\eta(t,u) \in B^*$ for all $0 < t < T_u$. Let
	\begin{align*}
		r^* := \sup\bigl\{T: 0 < T < T_u, \eta(t,u) \in B_\mu \text{ in } t \in [0,T)\bigr\}.
	\end{align*}
    Obviously, $r* \geq r$. We claim that $r^* = T_u$. By contradiction, we suppose $r^* < T_u$. According to the definition of $r^*$, we know that $\eta(t,u) \in B_\mu$ for any $t \in [0,r^*)$. Using the closedness of $B_\mu$ we deduce that $\eta(r^*,u) \in  B_\mu$.  Since $\eta(r^*,u) \in  B^*$, that is, $\eta(r^*,u) \in  B_\mu \cap B^*$, using Proposition \ref{propbmmanifold} again we have $\eta(t,u) \in B_\mu$ in $t \in [0,r^* + \bar r)$ for some $0 < \bar r \leq T_u-r^*$. This contradicts the definition of $r^*$ and completes the proof.
\end{proof}

In the next lemma, we give a condition to check that hypothesis \eqref{eqd=0} (or  hypothesis  \eqref{eqbm}) holds.

\begin{lemma} \label{lemconvex}
	Let $B_\mu = \tilde B \cap S_\mu$ where $\tilde B \subset E$ is closed, convex in $E$ and satisfies the assumption:
	\begin{align} \label{eqconecondition}
		kw \in \tilde B, \quad \forall k \in (0,1), \ \forall w \in \tilde B.
	\end{align}
	For $u \in B_\mu$, if $V(u)$ has the form $G(u)-u$ with $G(u) \in \tilde B$, then 
	\begin{align} \label{eqd=0atapoint}
		\lim_{s \searrow 0}s^{-1}dist(u + sV(u),B_\mu) = 0.
	\end{align}
\end{lemma}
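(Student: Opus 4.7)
The plan is to exhibit, for small $s>0$, an explicit point $z_s \in B_\mu$ lying within $O(s^2)$ of $u+sV(u)$, which is far more than enough to establish \eqref{eqd=0atapoint} (that only requires $o(s)$). The construction exploits both features of $\tilde B$: its convexity keeps the segment from $u$ to $G(u)$ inside $\tilde B$, and the cone-like property \eqref{eqconecondition} allows us to rescale back onto $S_\mu$ without leaving $\tilde B$. The bridge between these two features is the tangency of $V(u)$ to $S_\mu$ at $u$, which is the standing assumption on $V$ whenever this lemma is applied through Corollary \ref{corbm}.

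First, I would write $w_s := u + sV(u) = (1-s)u + sG(u)$ and use convexity of $\tilde B$ together with $u \in B_\mu \subset \tilde B$ and $G(u) \in \tilde B$ to conclude $w_s \in \tilde B$ for every $s \in [0,1]$. Next, expanding $|w_s|^2$ and using $(V(u),u) = 0$ yields
$$|w_s|^2 = \mu + s^2|V(u)|^2 \geq \mu,$$
so the scalar $k_s := \sqrt{\mu}/|w_s|$ belongs to $(0,1]$. The radial projection $z_s := k_s w_s$ then satisfies $|z_s|^2 = \mu$ and, by the cone condition \eqref{eqconecondition} when $k_s < 1$ (or trivially when $k_s = 1$, i.e., $V(u)=0$), it belongs to $\tilde B$. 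Hence $z_s \in \tilde B \cap S_\mu = B_\mu$. Finally, a short computation gives
$$\|w_s - z_s\| = (1-k_s)\|w_s\| = \frac{|w_s|^2 - \mu}{|w_s|(|w_s| + \sqrt{\mu})}\|w_s\| \leq \frac{s^2|V(u)|^2}{2\mu}\|w_s\|,$$
and since $\|w_s\| \leq \|u\| + s\|V(u)\|$ stays bounded as $s \downarrow 0$, we obtain $\|w_s - z_s\| = O(s^2)$. Dividing by $s$ and letting $s \searrow 0$ delivers \eqref{eqd=0atapoint}.

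The main obstacle, such as it is, is purely conceptual: one has to recognize that the tangency identity $(V(u),u)=0$ is precisely what forces $w_s$ to sit on the \emph{outer} side of $S_\mu$ in the $|\cdot|$ norm, so that the radial correction is a \emph{contraction} toward the origin and therefore preserves membership in $\tilde B$ via the cone condition. Without this sign information, the rescaling factor $\sqrt{\mu}/|w_s|$ could exceed $1$, putting $z_s$ outside $\tilde B$ and breaking the argument; everything else is a routine Taylor expansion.
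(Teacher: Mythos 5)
Your proof is correct and follows essentially the same route as the paper's: write $u+sV(u)=(1-s)u+sG(u)\in\tilde B$ by convexity, use the tangency $(V(u),u)=0$ to get $|u+sV(u)|^2=\mu+s^2|V(u)|^2\geq\mu$, radially rescale onto $S_\mu$ via the cone condition, and observe the correction is $O(s^2)$. The only (harmless) difference is that you explicitly treat the degenerate case $V(u)=0$, which the paper passes over.
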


\begin{proof}
Given $s>0$, let
	 $u_s = u + sV(u)$ where $u \in B_\mu \subset S_\mu$. Since $V(u) \in T_uS_u$,
	\begin{align*}
		|u_s|^2 & = (u+sV(u),u+sV(u)) \\
		& = (u,u) + 2s(u,V(u)) + s^2(V(u),V(u)) \\
		& = \mu + s^2|V(u)|^2.
	\end{align*}
	For $s \in (0,1)$, we have $u_s = sG(u) + (1-s)u \in \tilde B$ by the convexity of $\tilde B$. Then, since $\sqrt \mu/|u_s| < 1$, using \eqref{eqconecondition} we know $\sqrt \mu u_s/|u_s| \in \tilde B \cap S_\mu = B_\mu$. Hence,
	\begin{align*}
		\lim_{s \searrow 0}s^{-1}dist(u + sV(u),B_\mu) & \leq \lim_{s \searrow 0}s^{-1}\bigl\|u_s -  \frac{\sqrt\mu u_s}{|u_s|}\bigr\| \\
		& = \lim_{s \searrow 0}\frac {s|V(u)|^2}{2\mu}\|u_s\| = 0,
	\end{align*}
	which completes the proof.
\end{proof}



 Next we return to our specific problem where $E = H^1(\G)$, $H = L^2(\G)$, $S_\mu = H^1_\mu(\G)$. We shall use the above results with $B_\mu = (\pm\p)_\nu \cap H^1_\mu(\G)$, $\tilde B = (\pm\p)_\nu$ and $B^* = \B^{M_2}$.

Our next lemma is inspired by some elements found in the proof of \cite[Lemma 2.14]{TaTe}.

\begin{lemma} \label{lemG(u)}
 Let the gradient $\nabla E(\cdot,\G)$ be written in the form $\nabla E(u,\G) = u - G(u) \in T_uH^1_\mu(\G)$ for $u \in H^1_\mu(\G)$, where $T_uH^1_\mu(\G)$ is the tangent space of $H^1_\mu(\G)$ at $u$ defined as
	\begin{align*}
		T_uH^1_\mu(\G) := \bigl\{\phi \in H^1(\G): \int_\G u\phi dx = 0\bigr\}.
	\end{align*}
Then there exists $\nu^* > 0$ such that when $0 < \nu < \nu^*$, we have $G((\pm\p)_\nu \cap \B_{\rho^*}^\mu) \subset (\pm\p)_{\nu/2}$ for $0 < \mu \leq \tilde{\mu}$, where $\tilde{\mu}$ is defined in Proposition \ref{propinva}.
\end{lemma}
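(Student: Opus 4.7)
The approach is to analyze $G(u) = (-\frac{d^2}{dx^2}+1)^{-1}(|u|^{p-2}u + \la_u u)$ by splitting $u = u^+ - u^-$ and exploiting the positivity-preserving property of the resolvent $(-\frac{d^2}{dx^2}+1)^{-1}$ with Kirchhoff conditions. The critical preliminary is to determine the sign of $\la_u$. Since $\nabla E(u,\G) = u - G(u)$ must lie in $T_uH^1_\mu(\G)$, the condition $\int_\G u(u-G(u))\intd x = 0$ determines $\la_u$ explicitly by
\[
\la_u = \frac{\mu - \int_\G |u|^{p-2}u \cdot (-\frac{d^2}{dx^2}+1)^{-1}u\,\intd x}{\int_\G u \cdot (-\frac{d^2}{dx^2}+1)^{-1}u\,\intd x}.
\]
The denominator equals $\|(-\frac{d^2}{dx^2}+1)^{-1/2}u\|_{L^2(\G)}^2$ and is strictly positive on $H^1_\mu(\G)$, bounded below by a positive constant depending on $\mu$ and $\rho^*$ via Jensen's inequality on the spectral expansion of $u$. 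For the numerator, Lemma \ref{lemgn} together with $\mu \leq \tilde\mu$ yields $\int_\G|u|^p\intd x \leq \mu$ for any $u \in \B_{\rho^*}^\mu$, which is exactly how $\tilde\mu$ was calibrated. Combining Hölder's inequality with the bound $\|(-\frac{d^2}{dx^2}+1)^{-1}u\|_{L^p(\G)} \leq C\|(-\frac{d^2}{dx^2}+1)^{-1}u\|_{H^1(\G)} \leq C\sqrt\mu$ shows that the cross term is $O(\mu^{(3p-2)/(2p)})$, an exponent strictly greater than $1$ for $p>6$. Hence $\la_u>0$ on $\B_{\rho^*}^\mu$ (possibly after further restricting $\tilde\mu$, which is harmless).

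Granted $\la_u \geq 0$, I would decompose
\[
G(u) = F_+(u) - F_-(u), \quad F_\pm(u) := \Bigl(-\frac{d^2}{dx^2}+1\Bigr)^{-1}\bigl((u^\pm)^{p-1} + \la_u u^\pm\bigr),
\]
and note that $F_\pm(u) \geq 0$ by the maximum principle applied to $-\phi'' + \phi = f \geq 0$ with Kirchhoff data (equivalently, the positivity of the Green's function on a compact metric graph). In particular $F_+(u) \in \p$, so that
\[
\mathrm{dist}_{H^1}(G(u),\p) \leq \|G(u) - F_+(u)\|_{H^1(\G)} = \|F_-(u)\|_{H^1(\G)}.
\]
Testing the defining equation for $F_-(u)$ against $F_-(u)$ itself and applying Hölder together with the Sobolev embedding $H^1(\G)\hookrightarrow L^p(\G)$ leads to
\[
\|F_-(u)\|_{H^1(\G)} \leq C\|u^-\|_{L^p(\G)}^{p-1} + \la_u \|u^-\|_{L^2(\G)}.
\]

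To conclude, for any $w \in \p$ the pointwise inequality $|u^-|\leq |u-w|$ (valid because $w \geq 0$) together with the continuous injection $H^1(\G) \hookrightarrow L^q(\G)$ gives $\|u^-\|_{L^q(\G)} \leq C_q\,\mathrm{dist}_{H^1}(u,\p) \leq C_q\nu$ for $q\in[2,p]$ whenever $u\in\p_\nu$. Consequently,
\[
\|F_-(u)\|_{H^1(\G)} \leq C_1\nu^{p-1} + C_2\la_u\nu.
\]
Because $\la_u$ is bounded on $\B_{\rho^*}^\mu$ (for fixed $\mu$, by the positive lower bound on its denominator), one can select $\nu^*>0$ small enough so that $C_1(\nu^*)^{p-2} + C_2\la_u \leq 1/2$; then for $\nu<\nu^*$ we obtain $\mathrm{dist}_{H^1}(G(u),\p) \leq \nu/2$, i.e., $G(u) \in \p_{\nu/2}$. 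The case of $(-\p)_\nu$ follows from the symmetry $u \mapsto -u$, under which $E(\cdot,\G)$, $\B_{\rho^*}^\mu$, and $\la_u$ are invariant. The main obstacle throughout is the sign analysis of $\la_u$: unlike unconstrained problems where the multiplier is a fixed positive constant, here $\la_u$ is a nontrivial functional of $u$ whose sign is not a priori clear, and it is precisely the hypothesis $\mu \leq \tilde\mu$ combined with $p>6$ that forces the cross term in the numerator to be subdominant with respect to $\mu$.
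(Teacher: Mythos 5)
Your computation of the sign of $\la_u$ and your upper bound for it are essentially the paper's (the paper calibrates $\tilde{\mu}$ so that $\bigl|\int_\G |u|^{p-2}u\,\xi\,dx\bigr|\le \mu$ exactly, via the bound $\|\xi\|_{L^p}\le \|u\|_{L^p}$ coming from Lemma \ref{lemgn} applied to $\xi$, rather than your looser $O(\mu^{(3p-2)/(2p)})$ estimate that would force a further shrinking of $\tilde{\mu}$ not permitted by the statement). The decomposition $G(u)=F_+(u)-F_-(u)$ and the resulting bound $\mathrm{dist}(G(u),\p)\le \|F_-(u)\|_{H^1}\le C_1\nu^{p-1}+C_2\la_u\|u^-\|_{L^2}$ are also fine as far as they go.

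The genuine gap is in your last step. To get $\mathrm{dist}(G(u),\p)\le \nu/2$ from $C_1\nu^{p-1}+C_2\la_u\nu$ you need $C_1\nu^{p-2}+C_2\la_u\le 1/2$, and the second summand does \emph{not} shrink as $\nu\to 0$: $\la_u$ is merely bounded by some $\bar\la$ which can be large (from $\la_u=(\mu-\int|u|^{p-2}u\xi)/\|\xi\|_{H^1}^2$ and $\|\xi\|_{H^1}^2\ge \mu^2/(\mu+\rho^*)$ one only gets $\la_u\lesssim (\mu+\rho^*)/\mu$, which blows up as $\mu\to0$). So no choice of $\nu^*$ makes the linear term subordinate to $\nu/2$. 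This is precisely where the paper's proof does something more: testing $(-\tfrac{d^2}{dx^2}+1)G(u)$ against $(G(u))^+$ rather than estimating all of $F_-(u)$, the multiplier term acquires the extra factor $|\{x\in\G: G(u)(x)>0\}|^{(p-2)/p}$, and a separate compactness argument (strong convergence $G(u_n)\to G(\bar u)$ in $H^1(\G)$, $\bar u\le 0$, the strong maximum principle on the compact graph forcing $G(\bar u)<0$ everywhere) shows this measure tends to $0$ uniformly as $\nu\to0$. Your estimate of $\|F_-(u)\|_{H^1}$ cannot see this smallness because the support of $u^-$ need not be small for $u\in\p_\nu$ (a tiny negative perturbation spread over the whole graph keeps $\|u^-\|$ small but its support large). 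Without an ingredient of this type the coefficient of $\nu$ cannot be driven below $1/2$, and the proof does not close.
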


\begin{proof}
	By the definition of the gradient, $G(u)$ can be expressed as $(-\frac{d^2}{dx^2} + 1)^{-1}(|u|^{p-2}u + \la_uu)$, where $\la_u \in \R$ depends on $u$ and is determined by the condition $u - G(u) \in T_uH^1_\mu(\G)$, i.e.,
	\begin{align} \label{eqsignofla1}
		\int_{\G}(|u|^{p-2}u + \la_uu)(-\frac{d^2}{dx^2} + 1)^{-1}u \, dx = \int_{\G}|u|^2dx = \mu.
	\end{align}
    Let $\xi = (-\frac{d^2}{dx^2} + 1)^{-1}u$ for $u \in H^1_\mu(\G)$ and note that
    \begin{align} \label{eqsignofla2}
	\int_\G u\xi dx = \|\xi\|_{H^1(\G)}^2 > 0.
    \end{align}
	Recall that $\rho^* \leq b$ where $b = \left( \frac2{(p-2)K}\right)^{\frac4{p-6}}$. For any $u \in \B_{\rho^*}^\mu$, we have, using \eqref{ajout1}
	and that $-\xi'' + \xi =u$,
	\begin{align*}
		\int_\G |\xi|^pdx & \leq pK \left( \int_\G |\xi'|^2dx \right) ^{\frac{p-2}4} \left( \int_\G |\xi|^2dx \right)^{\frac{p+2}4} + p \ell^{\frac{2-p}2}\left( \int_\G |\xi|^2dx \right)^ {\frac p2} \nonumber\\
		& \leq pK \left( \int_\G |u'|^2dx \right) ^{\frac{p-2}4} \left( \int_\G |u|^2dx \right)^{\frac{p+2}4} + p \ell^{\frac{2-p}2}\left( \int_\G |u|^2dx \right)^ {\frac p2}.
	\end{align*}	
   Then, using H\"older inequality and \eqref{ajout1} again, we obtain
	\begin{align} \label{eqestiofla}
		\bigg|\int_\G|u|^{p-2}u\xi dx\bigg| & \leq \left( \int_\G|u|^pdx \right)^{\frac{p-1}p} \left( \int_\G|\xi|^pdx \right)^{\frac{1}p} \nonumber \\
		& \le pK\mu^{\frac{p+2}4}(\rho^*)^{\frac{p-2}4} + p\ell^{\frac{2-p}2}\mu^{\frac p2} \nonumber \\
		& \leq  pKb^{\frac{p-2}4}\mu^{\frac{p+2}4} +  p\ell^{\frac{2-p}2}\mu^{\frac p2}.
	\end{align} 
    When $0 < \mu \leq \tilde{\mu}$, \eqref{eqestiofla} shows that
    \begin{align} \label{eqsignofla3}
    	\int_\G|u|^{p-2}u\xi dx \leq \mu, \quad \forall u \in \B_{\rho^*}^\mu.
    \end{align}
	By \eqref{eqsignofla1}, \eqref{eqsignofla2} and \eqref{eqsignofla3}, we obtain that  $\la_u \ge 0$ for any $u \in \B_{\rho^*}^\mu$ if $0 < \mu \leq \tilde{\mu}$. Moreover, we claim that $\la_u \leq \bar \la$ for some $\bar \la < \infty$ independent of $u \in \B_{\rho^*}^\mu$. Using \eqref{eqsignofla1}, \eqref{eqsignofla2} and \eqref{eqsignofla3} again, it suffices to show the existence of  $\epsilon_0 > 0$ independent of $u \in \B_{\rho^*}^\mu$ such that $\|\xi\|_{H^1(\G)} \geq \epsilon_0$. By contradiction, we assume that there exists a sequence $\{u_n\} \subset \B_{\rho^*}^\mu$ such that $\xi_n \to 0$ strongly in $H^1(\G)$. Up to a subsequence, we can suppose that 
	\begin{align*}
		& u_n \rightharpoonup \bar u \quad \text{weakly in } H^1(\G), 
		& u_n \to \bar u \quad \text{strongly in } L^2(\G),
	\end{align*}
    for some $\bar u \in \overline{\B_{\rho^*}^\mu}$. Let $\bar \xi = (-\frac{d^2}{dx^2} + 1)^{-1}\bar u \neq 0$. On one hand, $\bar \xi \neq 0$ since $\bar u \neq 0$. On the other hand, $\{\xi_n\}$ strongly converges to $\bar \xi$ in $H^1(\G)$. Indeed,
    \begin{align} \label{eqconvergenceofxi}
    	\|\xi_n - \bar \xi\|_{H^1(\G)}^2 = \int_\G (u_n-\bar u)(\xi_n - \bar \xi)dx \leq \left( \int_\G(u_n-\bar u)^2dx\right) ^{\frac12}\left( \int_\G(\xi_n - \bar \xi)^2dx\right) ^{\frac12} \to 0.
    \end{align}
     Next, let $u \in (-\p)_\nu \cap \B_{\rho^*}^\mu$, $z = G(u)$, $z^+ = \max\{z,0\}$. Then
	\begin{align*}
		& \quad \ dist(z,-\mathcal{P}) \|z^+\|_{H^1(\G)} \leq \|z^+\|_{H^1(\G)}^2 = \langle z,z^+\rangle_{H^1(\G)} = \int_\G  z^+ ( -\frac{d^2}{dx^2} + 1) zdx\\
		& =  \int_\G (|u|^{p-2}u + \la_uu)z^+dx \leq \int_\G (|u^+|^{p-2}u^+z^+ + \la_uu^+z^+)dx \\ 
		& \leq \left( \int_\G|u^+|^pdx\right)^{\frac{p-1}p} \left( \int_\G|z^+|^pdx\right)^{\frac1p} + \bar \la |\{x \in \G: z(x) > 0\}|^{\frac{p-2}{p}} \left( \int_\G|u^+|^pdx\right) ^{\frac1p} \left( \int_\G|z^+|^pdx\right)^{\frac1p} \\
		& \leq C_1\left( \inf_{w \in -\p}\|u-w\|_{L^p(\G)}^{p-1} + \bar \la |\{x \in \G: z(x) > 0\}|^{\frac{p-2}{p}}\inf_{w \in -\p}\|u-w\|_{L^p(\G)}\right) \|z^+\|_{H^1(\G)} \\
		& \le C_2 \left( \inf_{w \in -\p}\|u-w\|_{H^1(\G)}^{p-1} +  \bar \la |\{x \in \G: z(x) > 0\}|^{\frac{p-2}{p}}\inf_{w \in -\p}\|u-w\|_{H^1(\G)}\right) \|z^+\|_{H^1(\G)},
	\end{align*}
    where $C_1,C_2$ are positive constants deduced from the Sobolev embedding $H^1(\G) \hookrightarrow L^p(\G)$, and we obtain that
    \begin{align} \label{eqdistofvandp}
    	dist(v,-\mathcal{P}) \leq C_2\left(\nu^{p-2} + \bar \la |\{x \in \G: z(x) > 0\}|^{\frac{p-2}{p}}\right) \nu.
    \end{align}
    In order to conclude that $z = G(u) \in (-\p)_{\nu/2}$ with sufficiently small $\nu > 0$, it is sufficient to prove that $\sup\{|\{x \in \G: G(u)(x) > 0\}|: u \in (-\p)_\nu \cap \B_{\rho^*}^\mu\}$ is sufficiently small when $\nu > 0$ is small enough. Arguing by contradiction, we suppose the existence of sequences $\nu_n \to 0$ and  $\{u_n\} \subset \B_{\rho^*}^\mu$ such that $u_n \in (-\p)_{\nu_n}$ and 
    \begin{align} \label{eqlim>0}
    	\lim_{n \to \infty}|\{x \in \G: G(u_n)(x) > 0\}| > 0.
    \end{align}
    Up to a subsequence, we have 
    \begin{align*}
    	& u_n \rightharpoonup \bar u \quad \text{weakly in } H^1(\G), 
    	& u_n \to \bar u \quad \text{strongly in } L^q(\G) \text{ for any } q \geq 1,
    \end{align*}
    for some $\bar u \in \overline{\B_{\rho^*}^\mu}$. Setting $\xi_n = (-\frac{d^2}{dx^2} + 1)^{-1}u_n$ and $\bar\xi = (-\frac{d^2}{dx^2} + 1)^{-1}\bar u$,  we have, as in  \eqref{eqconvergenceofxi}, that $\xi_n \to \bar \xi$ strongly in $H^1(\G)$. Then, in view of \eqref{eqsignofla1} we get $\la_{u_n} \to \la_{\bar u}$. Now, since $|u_n|^{p-1}u_n \to |u|^{p-1}u$ strongly in $L^2(\G)$, by a similar argument to \eqref{eqconvergenceofxi} we deduce that $G(u_n) \to G(\bar u)$ strongly in $H^1(\G)$. Since $u_n \in (-\p)_{\nu_n}$ with $\nu_n \to 0$, we conclude that $\bar u \leq 0$, and so $(-\frac{d^2}{dx^2} + 1)G(\bar u) = |\bar u|^{p-2}\bar u + \la_{\bar u}\bar u \leq 0$. Moreover, by \eqref{eqdistofvandp} we have $G(\bar u) \in -\p$, that is, $ G(\bar u) \leq 0$. Then, using the strong maximum principle we get $G(\bar u) < 0$ in $\G$, which, together with the compactness of $\G$, shows that
    \begin{align*}
    	\lim_{n \to \infty}|\{x \in \G: G(u_n)(x) > 0\}| = 0,
    \end{align*}
    contradicting \eqref{eqlim>0}. Thus $G(u) \in (-\p)_{\nu/2}$ if $\nu > 0$ is sufficiently small. A similar argument works for the $+$ sign.
\end{proof}

Now we can prove the main result in this section.

\begin{proof}[Proof of Proposition \ref{propinva}]
Given $\nu \in (0,\nu^*)$, we use Corollary \ref{corbm} with $B_\mu = (\pm\p)_\nu \cap H^1_\mu(\G)$ and $B^* = \B^{M_2}$. The fact that \eqref{eqbm} is satisfied is a consequence of Lemma \ref{lemconvex} used with $\tilde{B} = (\pm\p)_\nu$. Indeed, we know from Lemma \ref{lemG(u)} that $G(u) \in (\pm\p)_{\nu/2}$ if $u \in (\pm\p)_\nu \cap \B^{M_2}$ and thus \eqref{eqd=0atapoint}, or equivalently \eqref{eqbm}, holds for any $u \in (\pm\p)_\nu \cap \B^{M_2} = B_{\mu} \cap \B^{M_2}.$  Applying Corollary \ref{corbm} we obtain that  $\eta(t,u) \in D^*(\nu) \cap \B^{M_2}$ for any $u \in D^*(\nu) \cap \B^{M_2}$ and all $0 < t <T_u$. However, Lemma \ref{lemgloext} states that if $u \in \B^{M_2}$, then $\eta(t,u)$ exists for all $t > 0$ with 
$\eta(t,u) \in \B^{M_2}$ for all $t > 0$. Thus we finally get that, for all $u \in D^*(\nu) \cap \B^{M_2}$,  $\eta(t,u) \in D^*(\nu) \cap \B^{M_2}$ for all $t>0$, completing the proof.
\end{proof}

	\begin{remark}
		The value of $\nu^*$ in Proposition \ref{propinva} is independent of the choice of $\delta_1$ used in defining $\eta$. Thus, in the proof of Theorem \ref{thmsc}, in the next section, for $0 < \nu < \nu^*$, it will be not restrictive to take $\delta_1 < \nu$.
	\end{remark}

\section{A pair of sign-changing critical points} \label{secproof}

We will complete the proof of Theorem \ref{thmsc} in this section. 

\begin{proof} [Proof of Theorem \ref{thmsc}]
	
Let $\mu_2 = \min\{\hat{\mu}, \bar{\mu}, \mu^*, \tilde{\mu}\}$. With $\nu > 0$ sufficiently small, we first prove that $\K[c_1-\bar \epsilon,c_1+\bar \epsilon] \cap \B^{M_2} \cap S^*(\nu) \neq \emptyset$ for $0 < \mu < \mu_2$ and for any $\bar \epsilon > 0$, where
$$ c_1 = \inf_{A \in \cl_1}\sup_{u \in A\cap S^*(\nu)}E(u,\G) \quad 
$$
was defined in Section \ref{seclink}. 

Before proceeding, we give a lemma, which is useful to choose $\delta_1$ used to define the descending flow.

\begin{lemma} \label{lembelongtog}
	For any $u \in \B^\beta$ with $\beta < M_2$, there exists $\tilde \delta > 0$ independent of $u$ such that
	\begin{align}
		B_{\tilde \delta}(u) := \bigl\{w \in H^1_\mu(\G): \|u - w\|_{H^1(\G)} < \tilde \delta\bigr\} \subset \B^{M_2}.
	\end{align}
\end{lemma}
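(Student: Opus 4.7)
The plan is to produce a uniform radius $\tilde\delta$ depending on $\beta$ but not on $u$, by handling the two defining conditions of $\B^{M_2}$ (the kinetic bound $\int_\G |w'|^2 dx < \rho^*$ and the energy bound $E(w,\G) < M_2$) separately. The strict inequality $\beta < M_2$ will provide a quantitative margin for each.

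The key step is to establish a uniform gap between $\rho(u) := \int_\G |u'|^2 dx$ and $\rho^*$ for $u \in \B^\beta$. From \eqref{ajout1} one has $E(u,\G) \geq g(\rho(u)) - \ell^{(2-p)/2}\mu^{p/2}$ whenever $\rho(u) \leq \rho^*$. Combined with $E(u,\G) < \beta$ and the identity $M_2 = g(\rho^*) - \ell^{(2-p)/2}\mu^{p/2}$ from Proposition \ref{propinf}, this yields $g(\rho(u)) < g(\rho^*)$. Since $g$ is strictly increasing on $(0,\rho^*]$ by Proposition \ref{propinf}, inversion produces
$$\rho(u) \leq \rho_\beta := g^{-1}\bigl(\beta + \ell^{(2-p)/2}\mu^{p/2}\bigr) < \rho^*$$
for every $u \in \B^\beta$ (and if the right-hand side is negative, $\B^\beta$ is empty and there is nothing to prove).

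Once this gap is in hand, the rest is routine continuity. For any $w \in H^1_\mu(\G)$ with $\|u - w\|_{H^1(\G)} < \tilde\delta$, the triangle inequality gives $\|w'\|_{L^2(\G)} \leq \sqrt{\rho_\beta} + \tilde\delta$, which is strictly below $\sqrt{\rho^*}$ as soon as $\tilde\delta < \sqrt{\rho^*} - \sqrt{\rho_\beta}$, a threshold depending only on $\beta$, $\rho^*$ and $\mu$. For the energy condition, since $\B_{\rho^*}^\mu$ is bounded in $H^1(\G)$, Lemma \ref{lemgn} together with the $C^1$-smoothness of $E(\cdot,\G)$ yields a Lipschitz constant $L = L(\rho^*,\mu,p,\G)$ for $E(\cdot,\G)$ on a fixed $H^1$-neighbourhood of $\B_{\rho^*}^\mu$; choosing $\tilde\delta < (M_2 - \beta)/L$ then forces $E(w,\G) < E(u,\G) + L\tilde\delta < M_2$. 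The final $\tilde\delta$ is the minimum of these two thresholds.

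The main obstacle is the first step: a priori one could worry that $\rho(u)$ approaches $\rho^*$ along a sequence in $\B^\beta$, in which case no uniform radius would work for the kinetic condition. The strict monotonicity of $g$ on $(0,\rho^*]$, combined with the strict inequality $\beta < M_2$, is precisely what rules this out, giving a genuine gap $\rho^* - \rho_\beta > 0$. Everything after that is a standard $H^1$-continuity argument on a bounded set.
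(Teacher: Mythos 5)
Your proof is correct, and it reaches the conclusion by a somewhat different route than the paper for the containment $B_{\tilde\delta}(u)\subset \B_{\rho^*}^\mu$. The paper's proof is shorter: it notes that functions in $\B^\beta\subset\B_{\rho^*}^\mu$ are uniformly bounded in $H^1(\G)$, so (by the same Lipschitz-on-bounded-sets continuity you invoke) one gets $\sup_{B_{\tilde\delta}(u)}E(\cdot,\G)<M_2$ for a uniform $\tilde\delta$, and then it reuses the already established bound $\inf_{\U_{\rho^*}^\mu}E(\cdot,\G)\geq M_2$ from Proposition \ref{propinf}: since the whole ball has energy below $M_2$, it cannot meet the boundary $\U_{\rho^*}^\mu$, and since $\B_{\rho^*}^\mu$ is open (a crossing argument), the ball stays inside it. You instead re-run the computation behind \eqref{lowerbound} in the interior: combining \eqref{ajout1} with $E(u,\G)<\beta$ and the strict monotonicity of $g$ on $(0,\rho^*]$ gives the quantitative gap $\rho(u)\leq\rho_\beta<\rho^*$, after which the triangle inequality suffices. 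Your version is more explicit (it produces a concrete admissible radius and avoids the mild topological point, implicit in the paper, that a ball in $H^1_\mu(\G)$ leaving the open set $\B_{\rho^*}^\mu$ must pass through $\U_{\rho^*}^\mu$), at the cost of repeating the Gagliardo--Nirenberg estimate rather than quoting Proposition \ref{propinf}. Both arguments are sound and rest on the same two ingredients: the margin $M_2-\beta>0$ and the uniform $H^1$-bound on $\B_{\rho^*}^\mu$.
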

\begin{proof}
	Note that the functions in $\B^\beta$ are uniformly bounded in $H^1(\G)$. Hence, for $u \in \B^\beta$ with $\beta < M_2$, there exists $\tilde \delta > 0$ independent of $u$ such that $\sup_{w \in B_{\tilde \delta}(u)}E(w,\G) < M_2$. Since $\inf_{u \in \U_{\rho^*}^\mu}E(u,\G) \geq M_2$ where $\U_{\rho^*}^\mu$ is the boundary of $\B_{\rho^*}^\mu$ and $\B_{\rho^*}^\mu$ is open, we deduce that $B_{\tilde \delta}(u) \subset \B_{\rho^*}^\mu$. Therefore, $B_{\tilde \delta}(u) \subset \B^{M_2}$, which completes the proof.
\end{proof}

To prove Theorem \ref{thmsc}, we argue by contradiction and suppose there exists some $\bar{\epsilon} > 0$ such that $\K[c_1-\bar{\epsilon},c_1+\bar{\epsilon}] \cap \B^{M_2} \subset D^*(\nu)$ (this also holds if $\K[c_1-\bar{\epsilon},c_1+\bar{\epsilon}] \cap \B^{M_2} = \emptyset$). Then, assuming $\nu < \nu^*$ and using Lemma \ref{lemG(u)}, we obtain that $\K[c_1-\bar{\epsilon},c_1+\bar{\epsilon}] \cap \B^{M_2} \subset \p \cup (-\p)$. By the definition of $c_1$, there exist $A \in \mathcal{L}_1$ and $0 < \epsilon_1 \leq \bar{\epsilon}/3$ such that $\sup_{A \cap S^*(\nu)}E(\cdot,\G) < c_1 + \epsilon_1 < M_2$. By Lemma \ref{lemvaluesepe}, we know that $\underline{c}_1 < \bar{c}_1$. Hence, by decreasing $\epsilon_1$, we can assume that $3\epsilon_1 \leq \bar{c}_1 - \underline{c}_1 \leq c_1 - \underline{c}_1$. Note that $A \subset (\B^{c_1 + \epsilon_1} \cap S^*(\nu)) \cup (\B^{M_2} \cap D^*(\nu))$. Now we take $\delta_1 > 0$ such that $\delta_1 < \nu$ and $\delta_1 \leq \tilde \delta$, where $\tilde \delta$ is given in Lemma \ref{lembelongtog} with $\beta := c_1 + \epsilon_1 < M_2$. For any $u \in \B^{c_1 + \epsilon_1} \cap S^*(\nu)$, we claim that $dist(u,\K[c_1-\bar{\epsilon},c_1+\bar{\epsilon}]) \geq \delta_1$. Otherwise, there exists $w \in \K[c_1-\bar{\epsilon},c_1+\bar{\epsilon}]$ such that $\|u - w\|_{H^1(\G)} < \delta_1$. Since $\delta_1 < \tilde \delta$, using Lemma \ref{lembelongtog} we deduce that $w \in \B^{M_2}$, and so $w \in \p \cup (-\p)$. Observing that $u \in S^*(\nu)$, we have reached a contradiction since $\delta_1 < \nu$. 

Let $\eta(t,\cdot)$ be the decreasing flow defined in Section \ref{secinv}.
We claim that
\begin{equation}\label{ajout12}
\eta(2\epsilon_1,A) \subset D^*(\nu) \cup \B^{c_1 - \epsilon_1}. 
\end{equation}
In fact, for $u \in A$, if $\eta(t,u) \in D^*(\nu)$ and so $\eta(t,u) \in D^*(\nu) \cap \B^{M_2}$ for some $t \in [0,2\epsilon_1]$, Proposition \ref{propinva} yields that $\eta(2\epsilon_1,u) \in D^*(\nu)$. If $\eta(t,u) \in \B^{c_1 - \epsilon_1}$ for some $t \in [0,2\epsilon_1]$, since it is readily seen that $E(\eta(t,u),\G)$ is nonincreasing in $t \geq 0$, we obtain that $\eta(2\epsilon_1,u) \in \B^{c_1 - \epsilon_1}$.  If we are not in one of these two cases, necessarily 
\begin{equation}\label{detail}
\eta(t,u) \in S^*(\nu) \cap (\B^{c_1+\epsilon_1} \backslash \B^{c_1 - \epsilon_1}), \quad \mbox{for all } t \in [0,2\epsilon_1].
\end{equation}
Let us show, by contradiction, that \eqref{detail} is not true. This will prove that \eqref{ajout12} holds. Assuming \eqref{detail}, we observe that for $t \in [0,2\epsilon_1]$, $y(\eta(t,u)) = 1$ since we have proved that $dist(\eta(t,u),\K[c_1-\bar{\epsilon},c_1+\bar{\epsilon}]) \geq \delta_1$ when $\eta(t,u) \in \B^{c_1+\epsilon_1} \cap S^*(\nu)$. Also $h(\eta(t,u)) = 1$ since $\eta(t,u) \in U_2$. Then we have
\begin{align*}
	E(\eta(2\epsilon_1,u),\G) & = E(u,\G) + \int_0^{2\epsilon_1}dE(\eta(t,u),\G) \\
	& = E(u,\G) + \int_0^{2\epsilon_1} \bigl\langle \nabla E(\eta(t,u),\G),\frac{\partial}{\partial t}\eta(t,u) \bigr\rangle_{H^1(\G)} \\
	& = E(u,\G) - 2\epsilon_1 \\
	& < c_1 + \epsilon_1 - 2\epsilon_1 \\
	& = c_1 - \epsilon_1,
\end{align*}
which implies that $\eta(2\epsilon_1,u) \in \B^{c_1-\epsilon_1}$ and this contradicts  \eqref{detail}. Thus \eqref{ajout12} holds.

We will now show that \eqref{ajout12} is impossible. This will prove that the hypothesis that there exists a certain $\bar{\epsilon} > 0$ such that $\K[c_1-\bar{\epsilon},c_1+\bar{\epsilon}] \cap \B^{M_2} \subset D^*(\nu)$ was wrong. Since $\epsilon_1 > 0$ is chosen as $3\epsilon_1 \leq \bar{c}_1 - \underline{c}_1 \leq c_1 - \underline{c}_1$, it holds that $\mathbb{S}_1 \subset U_1$ where $U_1$ is given in the definition of $\eta(t,\cdot)$, and so
$\eta(t,\cdot)|_{\mathbb{S}_1} = \textbf{id}$ for all $t \geq 0$. Hence, $\eta(2\epsilon_1,\cdot)$ defines a continuous function from $A$ to $H^1_\mu(\G)$ with
$\eta(2\epsilon_1,\cdot)|_{\mathbb{S}_1} = \textbf{id}$. We will show that $\eta(2\epsilon_1,A) \in \mathcal{L}_1$, for which, it suffices to prove that $\eta(2\epsilon_1,A)$ is linked to $\mathbb{S}_1^{\perp}$. Let $f \in C(\eta(2\epsilon_1,A),H_\mu^1(\G))$ satisfying $f|_{\mathbb{S}_1} = \textbf{id}$. Then $f \circ \eta(2\epsilon_1,\cdot)$ is a continuous map from $A$ to $H_\mu^1(\G)$ satisfying $f \circ \eta(2\epsilon_1,\cdot)|_{\mathbb{S}_1} = \textbf{id}$, and there exists $u \in f \circ \eta(2\epsilon_1,A) \cap \mathbb{S}_1^{\perp}$, that is, $f(\eta(2\epsilon_1,A)) \cap \mathbb{S}_1^{\perp} \neq \emptyset$. At this point, using $\eta(2\epsilon_1,A) \in \mathcal{L}_1$ and the definition of $c_1$ it follows that \eqref{ajout12} is indeed impossible.

Therefore, for $0 < \mu < \mu_2$ and any $\bar \epsilon > 0$, $E(u,\G)$ has a constrained critical point $u_{2,\bar \epsilon}$ in $\B^{M_2} \cap S^*(\nu)$ with $E(u_{2,\bar \epsilon},\G) \in [c_1 - \bar{\epsilon},c_1 + \bar{\epsilon}]$. By Corollary \ref{corpscon}, we can get a critical point $u_2$ at level $c_1$. Moreover, $u_2$ is contained in $\overline{S^*(\nu)}$ and thus is sign-changing. Obviously, $u_2$ is non-constant and $-u_2 \not\equiv u_2$ is also a sign-changing critical point of $E(u,\G)$ constrained to $H^1_\mu(\G)$.

\medskip
For $0 < \mu_1$ defined in \eqref{defmu1}, we know from \cite[Theorem 1.1]{CJS} that $E(\cdot,\G)$ has, for $0 < \mu < \mu_1$, a positive critical point $u_1$ in $H^1_\mu(\G)$ at a mountain pass level. Obviously, $-u_1$ is a negative critical point at the same mountain pass level. Thus, if  $\mu_2 \leq \mu_1$, we obtain the desired result in Theorem \ref{thmsc}. If $\mu_2 > \mu_1$, we just redefine $\mu_2 := \min\{\mu_1,\hat{\mu}, \bar{\mu}, \mu^*, \tilde{\mu}\}$. The proof of Theorem \ref{thmsc} is now completed.
\end{proof}

\begin{remark}
It seems difficult to compare $\mu_1 >0$ for which a positive solution was obtained in \cite{CJS} for any $0 < \mu < \mu_1$ and the
 $\mu_2 >0$ of Theorem \ref{thmsc} since we do not know the expressions (or suitable estimates) of the best constant $K$ of the Gagliardo-Nirenberg inequality \eqref{eqg-n1} and the second eigenvalue $\la_2(\G)$ for general compact metric graphs. 
\end{remark}

\section{More and more sign-changing critical points by decreasing the mass if necessary and a bifurcation phenomena} \label{secproof2}

The goal of this section is to give the proof of Theorem \ref{thmsc2} and of Theorem \ref{thmsc3}. As the proof of Theorem \ref{thmsc2} is quite similar to the one of Theorem \ref{thmsc} we will not give the full details.

\begin{proof}[Proof of Theorem \ref{thmsc2}]
	We divide the proof into four steps.
	
	\vskip0.1in
	\noindent {\bf Step 1:} The min-max structures.
	
	Let $0=\la_1(\G) < \la_2(\G) \leq \la_3(\G) \leq \cdots $ be the eigenvalues of $-\frac{d^2}{dx^2}$ on the compact metric graph $\G$ with Kirchhoff condition at the vertices, and let $\phi_1, \phi_2, \phi_3, \cdots$ be the corresponding eigenfunctions with $\int_\G|\phi_i|^2dx = 1$ for all $i \in \N$. For any $k \in \N$ with $k \geq 2$ and $\la_{k-1}(\G) < \la_k(\G)$, we set
	\begin{align*}
		\s_{k-1} = span\{\phi_1,\cdots,\phi_{k-1}\} \cap H^1_\mu(\G), \quad \s_{k-1}^\perp = (span\{\phi_1,\cdots,\phi_{k-1}\})^\perp \cap H^1_\mu(\G).
	\end{align*}
    Note that
    \begin{align*}
    	\s_{k-1} = \left\{u = \sum_{i=1}^{k-1}t_i\sqrt{\mu}\phi_i: \sum_{i=1}^{k-1}t_i^2 = 1\right\}.
    \end{align*}
    We also introduce
    \begin{align*}
    	Q_k := \left\{u = \sum_{i=1}^{k}t_i\sqrt{\mu}\phi_i: \sum_{i=1}^{k}t_i^2 = 1, t_k \geq 0\right\},
    \end{align*}
    with
    \begin{align*}
    	\partial Q_k = \left\{u = \sum_{i=1}^{k-1}t_i\sqrt{\mu}\phi_i: \sum_{i=1}^{k-1}t_i^2 = 1\right\} = \mathbb{S}_{k-1}.
    \end{align*}
    Similarly to \eqref{eqdefl1}, we define
    \begin{align*} 
    	\cl_{k-1} = \cl_{k-1}(\mu) := \bigl\{A \subset \B_{\rho^*}^\mu: \sup_A E(\cdot,\G) < M_2, \ \s_{k-1} \subset A, \ A \text{ is compact and is linked to } \s_{k-1}^\perp\bigr\}.
    \end{align*}
	Let $\hat{\mu}_k = \hat{\mu}_k(\G,p) > 0$ be the positive solution of
	$$
	\la_k(\G)\mu + 2\ell^{\frac{2-p}2}\mu^{\frac p2} = \frac{p-6}{p-2}\left( \frac2{(p-2)K}\right)^{\frac4{p-6}},
	$$
	and let $\bar{\mu}_k = \bar{\mu}_k(\G,p) > 0$ be the positive solution of
	$$
	\la_k(\G)\mu^{\frac{2(p-2)}{p-6}} + 2\ell^{\frac{2-p}2}\mu^{\frac{(p-2)^2}{2(p-6)}} = \frac{p-6}{p-2}\left( \frac2{(p-2)K}\right)^{\frac4{p-6}}.
	$$
	Reasoning as in Lemma \ref{lemlnotemp}, we observe that $Q_k \subset \cl_{k-1}$ and thus $\cl_{k-1}$ is not empty for $0 < \mu < \min\{\hat{\mu}_k,\bar{\mu}_k\}$. Recalling that
		\begin{align*}
			& \pm \p = \bigl\{u \in H^1(\G): \pm u \geq 0\bigl\},\\
			& (\pm \p)_\nu = \bigl\{u \in H^1(\G): \inf_{w \in \pm \p}\|u -w\|_{H^1(\G)} \le \nu\bigr\}, \quad \text{for some } \nu > 0,  \\
			& \ D^*(\nu) = \p_\nu \cup (-\p)_\nu, \quad S^*(\nu) = H^1(\G) \backslash D^*(\nu),
		\end{align*}	
    and that
    \begin{align*}
    	\B^{M_2} := \bigl\{u \in \B_{\rho^*}^\mu: E(u,\G) < M_2\bigr\},
    \end{align*}
	we define
	\begin{align*}
		& c_{k-1} := \inf_{A \in \cl_{k-1}}\sup_{u \in A\cap S^*(\nu)}E(u,\G), \\
		& \bar{c}_{k-1} := \inf_{u\in\s_{k-1}^\perp \cap \B^{M_2}}E(u,\G), \\
		& \underline{c}_{k-1} := \sup_{u \in \s_{k-1}}E(u,\G).
	\end{align*}
	Reasoning as in Lemma \ref{lemfarawayfrom}, we have that $\delta := \min\{dist(\mathbb{S}_{k-1}^\perp \cap \B_{\rho^*}^\mu, \p), dist(\mathbb{S}_{k-1}^\perp \cap \B_{\rho^*}^\mu, -\p)\} > 0$. As in Lemma \ref{lemwelldef}, for $0 < \nu < \delta$, we can show that $c_{k-1}$ and $\bar{c}_{k-1}$ are well defined and that $ - \infty <\bar{c}_{k-1} \leq c_{k-1} < M_2$ for $0 < \mu < \min\{\hat{\mu}_k,\bar{\mu}_k\}$. Also, as in Lemma \ref{lemvaluesepe}, using that $\lambda_{k-1}(\G) < \lambda_k(\G)$, it follows that  $\bar{c}_{k-1} > \underline{c}_{k-1}$ for $0 < \mu < \min\{\hat{\mu}_k,\bar{\mu}_k, \mu^*_k\}$ where $\mu^*_k = \mu^*_k(\G,p) > 0$ is the positive solution of
	$$
	2\left( \ell^{\frac{2-p}2} + K\la_k(\G)^{\frac{p-2}4}\right) \mu^{\frac {p-2}2} = \la_k(\G) - \la_{k-1}(\G).
	$$

	\vskip0.1in
	\noindent {\bf Step 2:} Invariance under the negative gradient flow.
	
	Fix $0 < \delta_{k-1} < \nu$ and consider the locally Lipschitz continuous map $y_{k-1}(u): H^1_\mu(\G) \to [0,1]$ defined by
	\begin{align*}
		y_{k-1}(u) :=
		\left\{
		\begin{aligned}
			& 1 \quad \text{for all } u \in H^1_\mu(\G) \backslash(\mathcal{K}[c_{k-1}-\bar{\epsilon},c_{k-1}+\bar{\epsilon}])_{\delta_{k-1}/2}, \\
			& 0 \quad \text{for all } u \in (\mathcal{K}[c_{k-1}-\bar{\epsilon},c_{k-1}+\bar{\epsilon}])_{\delta_{k-1}/3},
		\end{aligned}
		\right.
	\end{align*}
    where $\bar \epsilon >0$ will be chosen in Step 3 later. Moreover, let
		$$h_{k-1}(u):= \frac{dist(u,U_{1,k-1})}{dist(u,U_{2,k-1})+dist(u,U_{1,k-1})},$$
		where
    	\begin{align*}
    	U_{1,k-1} := \bigl\{u \in H^1_\mu(\G): |E(u, \G) - c_{k-1}| \geq 3 \epsilon_{k-1}\bigr\}, \quad U_{2,k-1} := 
			\bigl\{u \in H^1_\mu(\G): |E(u,\G) - c_{k-1}| \leq 2 \epsilon_{k-1}\bigr\}, 
    \end{align*}
    and $0< 3\epsilon_{k-1} \leq \bar{\epsilon}$ will be chosen in Step 3 below. Without loss of generality, we further assume that $\delta_{k-1} \leq \tilde{\delta}$ where $\tilde \delta$ is given in Lemma \ref{lembelongtog} with $\beta := c_{k-1} + \epsilon_{k-1} < M_2$. Let $\eta_k(t,\cdot)$ be the decreasing flow on $H^1_\mu(\G)$ defined by
	\begin{align*}
		\left\{
		\begin{aligned}
			& \frac{\partial}{\partial t}\eta_k(t,u) = -h_{k-1}(\eta_k(t,u))y_{k-1}(\eta_k(t,u))\|\nabla E(\eta_k(t,u),\G)\|_{H^1(\G)}^{-2}\nabla E(\eta_k(t,u),\G), \\
			& \eta_k(0,u) = u.
		\end{aligned}
		\right.
	\end{align*}
	As in Lemma \ref{lemgloext}, one can show that for any  $u \in \B^{M_2}$, $\eta_k(t,u)$ exists for $t \in [0,\infty)$, and, $\eta_k(t,u) \in \B^{M_2}$ for all $t > 0$.
	
	Recall that $\tilde{\mu} = \tilde{\mu}(\G,p)$ is the positive solution of 
	\begin{align*}
		pK\left( \frac2{(p-2)K}\right)^{\frac{p-2}{p-6}}\mu^{\frac{p-2}4} + p\ell^{\frac{2-p}2}\mu^{\frac{p-2}2} = 1.
	\end{align*}
   Reasoning as in Proposition \ref{propinva}, one can show that, for sufficiently small $\nu > 0$, if $0 < \mu \leq \tilde{\mu}$, then $\eta_k(t,u) \in D^*(\nu) \cap \B^{M_2}$ for any $u \in D^*(\nu) \cap \B^{M_2}$ and all $t > 0$.
	
	\vskip0.1in
	\noindent {\bf Step 3:} Existence of a sign-changing critical point at level $c_{k-1}$.
	
	Setting $\check{\mu}_k = \min\{\hat{\mu}_k, \bar{\mu}_k, \mu^*_k, \tilde{\mu}\}$, and taking $\nu > 0$ sufficiently small, we aim to prove that $\K[c_{k-1}-\bar{\epsilon},c_{k-1}+\bar{\epsilon}] \cap \B^{M_2} \cap S^*(\nu) \neq \emptyset$ for $0 < \mu < \check{\mu}_k$ and any $\bar{\epsilon} > 0$. By contradiction, we suppose there exists some $\bar{\epsilon} > 0$ such that $\K[c_{k-1}-\bar{\epsilon},c_{k-1}+\bar{\epsilon}] \cap \B^{M_2} \subset D^*(\nu)$ (this also holds if $\K[c_{k-1}-\bar{\epsilon},c_{k-1}+\bar{\epsilon}] \cap \B^{M_2} = \emptyset$) for $0 < \mu < \check{\mu}_k$. By the definition of $c_{k-1}$, there exist $A \in \mathcal{L}_{k-1}$ and $0 < \epsilon_{k-1} \leq \bar{\epsilon}/3$ such that $\sup_{A \cap S^*(\nu)}E < c_{k-1} + \epsilon_{k-1} < M_2$. Since $\underline{c}_{k-1} < \bar{c}_{k-1}$, by decreasing $\epsilon_{k-1}$, we can assume that $3\epsilon_{k-1} \leq \bar{c}_{k-1} - \underline{c}_{k-1} \leq c_{k-1} - \underline{c}_{k-1}$. By arguments similar to the ones in the proof of Theorem \ref{thmsc}, we can obtain that
	\begin{align*} 
		\eta_k(2\epsilon_{k-1},A) \subset D^*(\nu) \cup \B^{c_{k-1}-\epsilon_{k-1}}.
	\end{align*}
   Then, as in Section \ref{secproof}, we can prove that $\eta_k(2\epsilon_{k-1},A) \in \cl_{k-1}$ and find a contradiction with the definition of $c_{k-1}$. 
    It completes this step and proves that for $0 < \mu < \check{\mu}_k$ and for any $\bar{\epsilon} > 0$, there is a constrained critical point $u_{k,\bar \epsilon}$ in  $\B^{M_2} \cap S^*(\nu)$ of $E(\cdot,\G)$ with $E(u_{k,\bar \epsilon},\G) \in [c_{k-1}-\bar{\epsilon},c_{k-1}+\bar{\epsilon}]$. By Corollary \ref{corpscon}, we can get a critical point $u_k$ at level $c_{k-1}$. Moreover, $u_k$ is contained in $\overline{S^*(\nu)}$ and thus is sign-changing.
	
	\vskip0.1in
	\noindent {\bf Step 4:} Distinguishing different critical points.
	
	We rewrite the eigenvalues of $-\frac{d^2}{dx^2}$ on the compact metric graph $\G$ with Kirchhoff condition at the vertices as
	\begin{align*}
		0=\la_1(\G) < \la_2(\G) \leq \la_3(\G) \leq \cdots \leq \la_{k_3-1}(\G) < \la_{k_3}(\G) \leq \cdots \leq \la_{k_n-1}(\G) < \la_{k_n}(\G) \leq \cdots
	\end{align*}
	For any $j \in \N$ with $j \geq 3$, we can take $\{k_2, k_3,\cdots,k_j\}$ such that $2 = k_2 < k_3 < \cdots < k_j$ and $\la_{k_i}(\G) > \la_{k_i-1}(\G)$ for any $i \in \{2,3,\cdots,j\}$. Let 
	$$
	\mu_j = \min\bigl\{\mu_1, \check{\mu}_{k_2},\cdots,\check{\mu}_{k_j}\bigr\}.
	$$
	By Steps 1, 2, 3, for $0 < \mu < \mu_j$, there exist $j-1$ sign-changing critical points $u_{k_2}, u_{k_3},\cdots, u_{k_j}$ of $E(\cdot,\G)$ constrained on $H^1_\mu(\G)$ such that $E(u_{k_i},\G) = c_{k_i-1}$ for any $i \in \{2,3,\cdots,j\}$.  We need to show that these critical points are distinct. On the one hand, testing the definition of $c_{k_i-1}$ with the choice $A = Q_{k_i} \subset \s_{k_i}$, we get
	$$
	c_{k_i-1} \leq \sup_{u \in \s_{k_i}}E(u,\G) \leq \sup_{u \in \s_{k_{i+1}-1}}E(u,\G) = \underline{c}_{k_{i+1}-1}.
	$$
	On the other hand, from Step 1, we know that,
	$$
	c_{k_i-1} \geq \bar{c}_{k_i-1} > \underline{c}_{k_i-1}, \quad i \in \{2,3,\cdots,j\}.
	$$
	Thus we can conclude that
		$$
		c_1 = c_{k_2-1} \leq \underline{c}_{k_3-1} < c_{k_3-1} \leq \underline{c}_{k_4-1} < \cdots < c_{k_{j-1}-1} \leq \underline{c}_{k_j-1} < c_{k_j-1},
		$$	
	which yields that $u_{k_i} \not\equiv u_{k_n}$ for any $i \neq n$. Obviously, $-u_{k_2}, -u_{k_3},\cdots, -u_{k_j}$ are also sign-changing critical points of $E(\cdot,\G)$ constrained on $H^1_\mu(\G)$. Then, recalling \cite[Theorem 1.1]{CJS} completes the proof of Theorem \ref{thmsc2}.
\end{proof}

Finally, we can give

\begin{proof}[Proof of Theorem \ref{thmsc3}]
We denote by $B_0$ the set of bifurcation points of \eqref{eqequwithkc}. It is classical, see for example, \cite{CrRa}, that $B_0 \subset \{ \lambda_i(\G): i \in \N\}$ and that $\lambda_1(\G) \in B_0$ since $\lambda_1(\G)$ is simple. To prove the theorem it suffices thus to show that $\lambda_i(\G) \in B_0$ for any $i \geq 2$. Recalling our notation $0 =\lambda_1(\G) <\lambda_2(\G)= \cdots \la_{k_3 -1}(\G) < \la_{k_3}(\G) \leq \cdots $, we only need to show that $\la_{k_j}(\G)\in B_0$ for any $j \geq 2$. From now on, we fix an arbitrary $j \geq 2$. By the proof of Theorem \ref{thmsc2}, for any $\mu \in (0, \mu_j)$, $(u_{\mu ,j}, \la_{\mu, j}) \in H^1(\G) \times \R$ is solution to \eqref{eqequwithkc} where $\la_{\mu, j}$ denotes the Lagrange multiplier associated to $u_{\mu, j}$. In addition 
\begin{equation}
E(u_{\mu, j},\G) = c_{k_j-1}\in \Big[\inf_{u  \in\s_{k_j-1}^\perp \cap \B^{M_2}} E(u, \G), \sup_{u \in \s_{k_j}}E(u, \G)\Big].
\end{equation}

Reasoning as in the proof of Lemma \ref{lemvaluesepe} we get 
\begin{align} \label{ajout10}
		\inf_{u\in\s_{k_j -1}^\perp \cap \B^{M_2}}E(u,\G)
		& \geq g(\la_{k_j}(\G)\mu)- \ell^{\frac{2-p}2}\mu^\frac p2. \nonumber \\
		& =  \frac{1}{2}\la_{k_j} \mu - K \mu^{\frac{p+2}{4}}(\la_{k_j} \mu)^{\frac{p-2}{4}} - \ell^{\frac{2-p}2}\mu^\frac p2.
	\end{align}
	Clearly also,
	$$\sup_{u \in {\s_{k_j}}}E(u, \G) \leq \frac{1}{2}\la_{k_j}\mu.$$
	Thus,
	\begin{equation*}
E(u_{\mu, j},\G) \in \Big[\frac{1}{2}\la_{k_j} \mu - K \mu^{\frac{p+2}{4}}(\la_{k_j} \mu)^{\frac{p-2}{4}} - \ell^{\frac{2-p}2}\mu^\frac p2, \frac{1}{2}\la_{k_j}\mu \Big].
\end{equation*}
We then have
\begin{align}\label{ajout11}
\frac{E(u_{\mu, j}, \G)}{\mu} \to \frac{1}{2}\la_{k_j}, \quad \mbox{as } \mu \to 0^+.
\end{align}
Now, from Lemma \ref{lemgn} and since $\int_{\G}|u'_{\mu,j}|^2 dx \leq \rho^*$,  we get
\begin{align}\label{ajout13}
\frac{\int_{\G}|u_{\mu,j}|^p dx}{\mu} \to 0, \quad \mbox{as } \mu \to 0^+.
\end{align}
Combining \eqref{ajout11} and \eqref{ajout13}, it follows 
\begin{align*}
\frac{\int_{\G}|u_{\mu,j}'|^2dx}{\mu} \to \frac{1}{2}\la_{k_j}, \quad \mbox{as } \mu \to 0^+ .
\end{align*}
Finally, from \eqref{eqequwithkc}, we deduce that
\begin{align*}
-\la_{\mu, j} = \frac{\int_{\G}|u_{\mu,j}'|^2dx}{\mu} - \frac{\int_{\G}|u_{\mu,j}|^p dx}{\mu} \to \la_{k_j}, \quad \mbox{as } \mu \to 0^+.
\end{align*}
Since $\int_{\G}|u_{\mu,j}'|^2 dx \to 0$ as $\mu \to 0^+$, this shows that $\la_{k_j}$ is a bifurcation point and ends the proof.
\end{proof}

\end{document}